\pgfplotsset{compat=newest}
\newcommand{\RR}{\mathbb{R}}
\newcommand{\PP}{\mathbb{P}}
\newcommand{\st}{\;|\;}
\newcommand{\bft}{\boldsymbol{t}}
\newcommand{\bfbeta}{\boldsymbol{\bfbeta}}
\newcommand{\mcP}{\mathcal{P}}
\newcommand{\mcL}{\mathcal{L}}
\newcommand{\mcF}{\mathcal{F}}
\newcommand{\mcA}{\mathcal{A}}
\newcommand{\mcM}{\mathcal{M}}
\newcommand{\mcT}{\mathcal{T}}
\newcommand{\mcO}{\mathcal{O}}
\newcommand{\tn}{|\mspace{-1mu}|\mspace{-1mu}|}
\newcommand{\jump}[1]{[#1]}
\newcommand{\mean}[1]{\{{#1} \}}
\newcommand{\avg}[1]{\{{#1} \}}
\newcommand{\tnup}{\tn_{\mathrm{up}}}
\newcommand{\tnuph}{\tn_{\mathrm{up},h}}
\newcommand{\tnsdhast}{\tn_{\mathrm{sd},h,\ast}}
\newcommand{\tnupast}{\tn_{\mathrm{up},\ast}}
\newcommand{\tnsd}{\tn_{\mathrm{sd}}}
\newcommand{\tnsdh}{\tn_{\mathrm{sd},h}}
\newcommand{\tnsdast}{\tn_{\mathrm{sd},\ast}}
\newcommand{\tncf}{\tn_{\mathrm{cf}}}
\newcommand{\bnabla}{b \cdot \nabla}
\newcommand{\bhnabla}{b_h^0 \cdot \nabla}
\newcommand{\foralls}{\forall\,}
\newcommand{\partialbar}{{\partial}}
\newcommand{\evaluated}[2]{\left.#1\right|_{#2}}
\newcommand{\onehalf}{\nicefrac{1}{2}}
\newcommand{\donehalf}{\dfrac{1}{2}}
\newcommand{\trootonehalf}{\tfrac{1}{\sqrt{2}}}
\newcommand{\threehalf}{\nicefrac{3}{2}}
\DeclareMathOperator{\nDiv}{\nabla \cdot}
\DeclareMathOperator{\diam}{diam}
\DeclareMathOperator*{\essinf}{ess\, inf}
\DeclareMathOperator{\atantwo}{atan_2}
\DeclareMathOperator{\Id}{Id}
\numberwithin{equation}{section}
\numberwithin{figure}{section}
\numberwithin{table}{section}
\crefname{hypothesis}{Hypothesis}{Hypotheses}
\newtheorem{assumption}{Assumption}
\title{Stabilized cut discontinuous Galerkin methods for advection-reaction problems\thanks{Submitted to the editors \today.
    \funding{This work was funded by the Kempe Foundation under Postdoc Scholarship JCK-1612,
by the Swedish Research Council under Starting Grant 2017-05038, and by the Swedish Research Programme Essence.}}}
\author{Ceren G\"urkan\thanks{Department of Civil Engineering, Kadir Has University, Cibali, 34083 Fatih,  Istanbul, Turkey (\email{ceren.gurkan@khas.edu.tr}).}
\and Simon Sticko\thanks{Department of Mathematics and Mathematical Statistics, Ume{\aa} University, SE-90187 Ume{\aa}, Sweden,
  (\email{andre.masssing@umu.se}, \email{simon.sticko@umu.se}).}
\and Andr\'e Massing\footnotemark[3] 
\thanks{
Department of Mathematical Sciences, Norwegian University of Science and Technology, NO-7491 Trondheim, Norway, (\email{andre.massing@ntnu.no})}
}
\begin{document}
\maketitle
  
\begin{abstract} 
  We develop novel stabilized cut discontinuous
  Galerkin (CutDG) methods for advection-reaction problems.
  The domain of interest is embedded into a structured, unfitted
  background mesh in $\mathbb{R}^d$ where the domain
  boundary can cut through the mesh in an arbitrary fashion.
  To cope with robustness problems caused by small cut elements, 
  we introduce ghost penalties in the vicinity of the embedded boundary
  to stabilize certain (semi)-norms associated with the advection and reaction operator.
  A few abstract assumptions on the ghost penalties are identified enabling us
  to derive geometrically robust and optimal a priori error 
  and condition number estimates for the stationary advection-reaction problem which hold
  irrespective of the particular cut configuration.
  Possible
  realizations of suitable ghost penalties are discussed. The
  theoretical results are corroborated by a number of computational 
  studies for various approximation orders and for two and
  three-dimensional test problems.
\end{abstract}
\begin{keyword}
  hyberbolic problems, advection-dominated problems, discontinuous Galerkin,
  cut finite element method, stabilization, a priori error estimates, condition number 
\end{keyword}

\begin{AMS}
  65N30, 65N12, 65N85
\end{AMS}

\section{Introduction}

To ease the burden of mesh generation in finite-element based
simulation pipelines, novel so-called \emph{unfitted}
finite element methods have gained much attention in recent years,
see~\cite{BordasBurmanLarsonEtAl2018} for a recent overview.  In
unfitted finite element methods, the mesh is exonerated from the task
to represent the domain geometry accurately and only used to define
proper approximation spaces.  The geometry of the model domain is
described independently by the means of a separate geometry model,
e.g., a CAD or level set based description, or even another,
independently generated tessellation of the geometry boundary.
Combined with suitable techniques to properly impose boundary or
interface conditions, for instance by means of Lagrange multipliers or
Nitsche-type
methods~\cite{GerstenbergerWall2008,CourtFournieLozinski2014,BurmanHansbo2010,BurmanHansbo2012,RankRuessKollmannsbergerEtAl2012,SchillingerRuess2014,
BoffiCavalliniGastaldi2015},
complex geometries can be simply embedded into an easy-to-generate background mesh.
Alternative routes to impose boundary and interface conditions for embedded geometries
are taken in the immersed finite element method~\cite{LiLinWu2003,LiIto2006} and in
finite element based formulations of the immersed boundary method~\cite{BoffiGastaldi2003}.

As the embedded geometry can cut arbitrarily through the background mesh,
a main challenge is to devise unfitted finite element methods that
are \emph{geometrically robust} in the sense that a priori error and conditioning
number estimates hold with constants
independent of the particular cut configuration.
One possible solution to achieve geometrical robustness
is provided by the approach taken in the 
\emph{cut finite element method} (CutFEM)
\cite{BurmanHansboLarsonEtAl2014}
which is based on a theoretically
founded stabilization framework.
The CutFEM stabilization technique allows to
transfers stability and approximation properties from a
finite element scheme posed on a standard mesh to its cut finite
element counterpart.
As a result, a wide range of problem classes
has been treated 
including, e.g.,
elliptic interface problems~\cite{BurmanZunino2012,GuzmanSanchezSarkis2015,BurmanGuzmanSanchezEtAl2017},
Stokes and Navier-Stokes type problems
\cite{BurmanHansbo2013,MassingLarsonLoggEtAl2013,BurmanClausMassing2015,
CattaneoFormaggiaIoriEtAl2014,MassingSchottWall2017,WinterSchottMassingEtAl2017,KirchhartGrosReusken2016,GrosLudescherOlshanskiiEtAl2016,GuzmanOlshanskii2016},
two-phase and fluid-structure
interaction
problems~\cite{SchottRasthoferGravemeierEtAl2015,GrosReicheltReusken2006,GrossReusken2011,MassingLarsonLoggEtAl2015}.
As a natural application area, unfitted finite element methods have
also been proposed for problems in fractured porous
media~\cite{FlemischFumagalliScotti2016,Fumagalli2012,
  DAngeloScotti2012,FormaggiaFumagalliScottiEtAl2013}.

In addition to the aforementioned unfitted \emph{continuous} finite
element methods, unfitted \emph{discontinuous Galerkin} (DG) methods have
successfully been devised to treat boundary and interface problems on
complex and evolving
domains~\cite{BastianEngwer2009,BastianEngwerFahlkeEtAl2011,Saye2015},
including flow problems with moving boundaries and
interfaces~\cite{SollieBokhoveVegt2011,HeimannEngwerIppischEtAl2013,Saye2017,MuellerKraemer-EisKummerEtAl2016,KrauseKummer2017,Kummer2017}.
In contrast to stabilized continuous cut finite element methods, in
unfitted discontinuous Galerkin methods, troublesome small cut
elements can be merged with neighbor elements with a large
intersection support by simply extending the local shape functions
from the large element to the small cut element.  As inter-element
continuity is enforced only weakly, no additional measures need to be
taken to force the modified basis functions to be globally continuous.
Consequently, cell merging in unfitted discontinuous Galerkin methods
provides an alternative stabilization mechanism to ensure that the
discrete systems are well-posed and well-conditioned.  For a very
recent extension of the cell merging approach to continuous finite
elements, we refer to~\cite{BadiaVerdugoMartin2018}.
Thanks to their favorable conservation and stability properties,
unfitted discontinuous Galerkin methods remain an attractive
alternative to continuous CutFEMs, but some drawbacks are the almost
complete absence of numerical analysis except
for~\cite{Massjung2012,JohanssonLarson2013},
the required rather invasive implementational changes
in standard DG assembly and solving routines to account for cell merging,
and the lack of natural discretization approaches for PDEs
defined on surfaces.

Starting with the early
contributions~\cite{ClarkeHassanSalas1986,Quirk1994}, cell merging
has also been a popular approach 
to mitigate small cut cell related stability issues and
severe time-step restrictions in explicit time-stepping methods
for finite volume based Cartesian cut
cell discretizations of compressible flow
problems~\cite{HartmannMeinkeSchroeder2008,SchneidersHartmannMeinkeEtAl2013},
and hyperbolic conservation laws in general.
For cut cell methods based on balancing numerical fluxes,
a major challenge is the robustness of the cell merging process, in particular
for complex 3D geometries or for problems with moving boundaries~\cite{Berger2017}.
Alternative approaches exist such as the $h$-box 
method~\cite{HelzelBergerLeVeque2005,BergerHelzel2012},
and stabilized fluxes in dimensionally split
methods~\cite{KleinBatesNikiforakis2009,GokhaleNikiforakisKlein2018},
but the resulting formulations 
are typically of at most second order.
For an overview on Cartesian cut cell methods
and approaches to overcome small cut cell related problems,
we refer to the recent review~\cite{Berger2017}.

So far, most of the theoretically analyzed unfitted finite element
schemes have been proposed for elliptic type of problems.
In contrast to conducted research on
finite volume based Cartesian cut cell methods,
very little attention has been paid to the theoretical development of unfitted
finite element methods for advection-dominant or hyberbolic
problems, except
for~\cite{MassingSchottWall2017,WinterSchottMassingEtAl2017}
considering a continuous interior penalty (CIP) based CutFEM for
the Oseen problem,
and~\cite{BurmanHansboLarsonEtAl2015b,OlshanskiiReuskenXu2014,BurmanHansboLarsonEtAl2018}
developing continuous unfitted finite element methods for
advection-diffusion equations on surfaces based on the CIP and
streamline upwind Petrov Galerkin (SUPG) approach. On the other
hand, for the DG community, hyperbolic problems have been of
interest from the very beginning, as the first DG method was
developed by~\cite{Reed1973} to model neutron transport problem.
The first analysis of these methods was presented
in~\cite{LesaintRaviart1974} and then improved
in~\cite{JohnsonNaevertPitkaeranta1984}, while
\cite{BrezziMariniSueli2004}
reformulated and generalized the upwind flux strategy in DG methods
for hyperbolic problems by introducing a tunable penalty parameter.
The advantageous conservation
and stability properties, the high locality and the
naturally inherited upwind flux term in the bilinear form makes DG
methods popular to handle specifically advection dominated
problems~\cite{Cockburn1999, HoustonSchwabSueli2002,HoustonSchwabSueli2002,Zarin2005} as
well as elliptic ones~\cite{ArnoldBrezziCockburnEtAl2002,
BrezziManziniMariniPietraRusso2000}. 
For a detailed overview,
we refer to the
monographs~\cite{DiPietroErn2012,HesthavenWarburton2007} which
include comprehensive bibliographies on discontinuous Galerkin methods
for both elliptic and hyberbolic problems.

\subsection{Novel contributions and outline of this paper}
\label{ssec:novel-contrib}
In this work we continue the development of a novel \emph{stabilized}
cut discontinuous Galerkin (CutDG) framework initiated
in~\cite{GuerkanMassing2018}. 
Departing from the elliptic boundary and
interface problems considered in~\cite{GuerkanMassing2018}, we here
propose and analyze CutDG methods for scalar, first-order hyperbolic
problems. 
The main focus is profoundly on stationary problems, but we shall briefly
demonstrate that the same type of method also extends to time-dependent problems.
However, the analysis of the time-dependent case is postponed to future
work. 
The proposed methods do not rely on the cell merging methodology, instead
geometrical robustness is achieved by adding properly
designed ghost penalty stabilization in the vicintiy of the embedded
boundary.  The idea of extending ghost penalty stabilization
techniques from the continuous CutFEM
approach~\cite{BurmanClausHansboEtAl2014} to discontinuous Galerkin
based discretizations allows for a minimally invasive extension of
existing fitted discontinuous Galerkin software to handle unfitted
geometries.  Only additional quadrature routines need to be added to
handle the numerical integration on cut geometries, and we refer to
the numerous quadrature algorithms capable of higher order geometry
approximation~\cite{MuellerKummerOberlack2013,Saye2015,Lehrenfeld2016,FriesOmerovic2015,FriesOmerovicSchoellhammerEtAl2017}
which have been proposed in recent years.  With a suitable choice of
the ghost penalty, the sparsity pattern of the matrix associated with the 
advection-reaction operator requires only little manipulation compared to its 
fitted DG counterpart.
In fact, the resulting stencil on elements near the embedded
boundary will correspond to the stencil of a symmetric interior
penalty method~\cite{Arnold1982} for second order diffusion
problems which in many DG-based finite element libraries is already
readily available. But similar to the cell merging approach, our
stabilization framework works automatically for higher order
approximation spaces with polynomial orders $p$ and is not limited
to low order schemes.

We start by briefly recalling the advection-reaction model problems
and the corresponding weak formulations in
Section~\ref{sec:model-problem}, followed by the presentation of
the stabilized cut discontinuous Galerkin methods in
Section~\ref{sec:cutDGM} which includes additional abstract
ghost penalties. A main feature of the presented numerical analysis
is that we identify a number of abstract assumptions on the
ghost penalty to prove geometrically robust optimal a priori
error and condition number estimates which hold
independent of the particular cut configuration.
To prepare the a priori error analysis, Section~\ref{ssec:useful-inequalities}
collects a number of useful inequalities and explains the
construction of an unfitted but stable $L^2$ projection operator.
In Section~\ref{ssec:stab-prop}, we derive an
inf-sup condition in a ghost penalty enhanced scaled streamline-diffusion
type norm. A key observation is that the classical argument
in~\cite{BrezziMariniSueli2004} based on boundedness on orthogonal
subscales does not work as the local orthogonality properties of
the unfitted $L^2$ projection is perturbed due to the mismatch between
the physical domain and the active mesh where the discontinuous
ansatz functions are defined. 
In Section~\ref{ssec:aprior-analysis}, the results from the previous
sections are combined to establish an optimal a priori error estimate of
the form $\mcO(h^{p+\onehalf})$ for 
the proposed stationary CutDG method with constants independent of
the particular cut configuration.
We continue our abstract analysis in
Section~\ref{ssec::condition-number-est} and prove that the condition
number of the matrix related to the advection-reaction operator 
scales like $\mcO(h^{-1})$, again with geometrically robust constants.
To the best of our knowledge, this is the first time, 
a theoretical analysis of any type of unfitted discontinuous Galerkin method
for an advection-reaction problem is presented.
Our theoretical investigation concludes with discussing a number of
ghost penalty realizations in Section~\ref{ssec:ghost-penalty-real}.
In Section~\ref{sec:time-stepping}, 
we briefly demonstrate how the ghost penalty approach can be
combined with explicit Runge-Kutta methods to solve
time-dependent advection-reaction problem under a standard
hyperbolic CFL condition.
Finally, we corroborate our theoretical analysis with numerical examples in
Section~\ref{ssec:numerical-results} where we study both the
convergence properties and geometrical robustness of the proposed CutDG
method.

\subsection{Basic notation}
Throughout this work, $\Omega \subset \RR^d$, $d = 2,3$ denotes an
open and bounded domain
with piecewise smooth boundary
$\partial \Omega$, while $\Gamma = \partial \Omega$ denotes its topological boundary.
For $U \in \{\Omega, \Gamma \}$
and $ 0 \leqslant m < \infty$, $1 \leqslant q \leqslant \infty$, let
$W^{m,q}(U)$ be the standard Sobolev spaces consisting of those
$\RR$-valued functions defined on $U$ which possess $L^q$-integrable
weak derivatives up to order~$m$. Their associated norms are denoted
by $\|\cdot \|_{m,q,U}$.  As usual, we write $H^m(U) = W^{m,2}(U)$ and
$(\cdot,\cdot)_{m,U}$ and $\|\cdot\|_{m,U}$ for the associated inner
product and norm. If unmistakable, we occasionally write
$(\cdot,\cdot)_{U}$ and $\|\cdot \|_{U}$ for the inner products and
norms associated with $L^2(U)$, with $U$ being a measurable subset of
$\RR^d$.
Any norm $\|\cdot\|_{\mcP_h}$ used in this work which
involves a collection of geometric entities $\mcP_h$ should be
understood as broken norm defined by
$\|\cdot\|_{\mcP_h}^2 = \sum_{P\in\mcP_h} \|\cdot\|_P^2$ whenever
$\|\cdot\|_P$ is well-defined, with a similar convention for scalar
products $(\cdot,\cdot)_{\mcP_h}$.  Any set operation
involving $\mcP_h$ is also understood as element-wise operation,
e.g., $ \mcP_h \cap U = \{ P \cap U \st P \in \mcP_h \} $ and $
\partial \mcP_h = \{ \partial P \st P \in \mcP_h \} $ allowing for
compact short-hand notation such as
$ (v,w)_{\mcP_h \cap U} = \sum_{P\in\mcP_h} (v,w)_{P \cap U} $ and
$ \|\cdot\|_{\partial \mcP_h\cap U} = \sqrt{\sum_{P\in\mcP_h} \|\cdot\|_{\partial P\cap
    U}^2}$.  Finally, throughout this work, we use the notation
$a \lesssim b$ for $a\leqslant C b$ for some generic constant $C$
(even for $C=1$) which varies with the context but is always
independent of the mesh size $h$ and the position of $\Gamma$ relative
to the background $\mcT_h$, but may depend on the dimensions
$d$, the polynomial degree of the finite element functions, the shape
regularity of the mesh and the curvature of $\Gamma$.

\section{Model problems}
\label{sec:model-problem}
For a given vector
field $b \in [W^{1,\infty}(\Omega)]^d$ and a scalar function $c \in
L^{\infty}(\Omega)$ we consider the advection-reaction problem of the form
\begin{subequations}
  \label{eq:adr-problem}
  \begin{alignat}{3}
    b \cdot \nabla u + c u &= f &&\quad \text{in } \Omega,
    \label{eq:adr-pde}
    \\
    u &= g && \quad \text{on } \Gamma^-,
    \label{eq:adr-boundary}
  \end{alignat}
\end{subequations}
\noindent where $g \in L^2(\Gamma^-)$ describes the boundary value on the inflow
boundary of $\Omega$ defined by
\begin{align}
  \Gamma^- &= \{ x \in \partial \Omega \st b(x)\cdot n_{\Gamma}(x) < 0\}.
  \label{eq:inflow-bdry}
             \intertext{Here, $n_\Gamma$ denotes the outer normal associated with $\Gamma$.
             Correspondingly, the outflow and characteristic boundary are defined by,
             respectively,}
  \Gamma^+ &= \{ x \in \partial \Omega \st b(x)\cdot n_\Gamma(x) > 0\},
  \label{eq:outflow-bdry}
  \\
  \Gamma^0\; &= \{ x \in \partial \Omega \st b(x)\cdot n_\Gamma(x) = 0\}.
  \label{eq:charact-bdry}
\end{align}
As usual in this setting,
we assume that
\begin{equation}
  c_0  \coloneqq \essinf_{x\in\Omega} \Bigl( c(x) - \frac{1}{2} \nDiv b(x) \Bigr) > 0.
  \label{eq:c0-def}
\end{equation}
The corresponding weak formulation of scalar hyperbolic problem~\eqref{eq:adr-problem}
is to seek $u \in V \coloneqq \{ v \in L^2(\Omega) \st  \bnabla v \in L^2(\Omega) \}$
such that $\foralls v \in V$
\begin{align}
  a(u,v) = l(v),
  \label{eq:weak-form-cont-problem}
\end{align}
with the bilinear form $a(\cdot,\cdot)$ and linear form $l(\cdot)$ given by
  \begin{align}
    a(u, v) = (b\cdot \nabla u + cu, v)_{\Omega} - (b\cdot n_{\Gamma} u, v)_{\Gamma^-},
              \qquad
    l(v) = (f,v)_{\Omega} - (b\cdot n_{\Gamma} g, v)_{\Gamma^-}.
    \label{eq:form-def-cont-probl}
  \end{align}

As we briefly consider time-dependent problems, we shall also solve
\begin{subequations}
  \label{eq:time-dep-adr-problem}
  \begin{alignat}{3}
    \partial_t u+ b \cdot \nabla u + c u &= f &&\quad \text{in } \Omega,
    \label{eq:time-dep-adr-pde}
    \\
    u &= g && \quad \text{on } \Gamma^-,
    \label{eq:time-dep-adr-boundary}
    \\
    \evaluated{u}{t=0} &= u_0  &&\quad \text{in } \Omega.
    \label{eq:time-dep-adr-initial}
  \end{alignat}
\end{subequations}
The weak form corresponding to \eqref{eq:time-dep-adr-problem} reads: find $u$ so that for each fix $t\in(0, T)$, $u\in V$ and such that $u$ fulfills
\begin{subequations}
\label{eq:weak-form-time-dep-cont-problem}
\begin{align}
  ( \partial_t u,v)_\Omega + a(u,v) &= l(v), \quad \foralls v \in V,\\
  \evaluated{u}{t=0} &= u_0.
\end{align}
\end{subequations}

\section{Cut discontinuous Galerkin methods for advection-reaction problems}
\label{sec:cutDGM}
Let $\widetilde{\mcT}_h$ be a background mesh covering $\overline{\Omega}$ 
consisting of $d$-dimensional, shape-regular (closed) polygons, $\{T\}$,
which are either simplices or quadrilaters/hexahedrals.
As usual, we introduce the
local mesh size $h_T = \diam(T)$ and the
global mesh size $h = \max_{T \in \widetilde{\mcT}_h} \{h_T\}$.
For $\widetilde{\mcT}_h$ we define the so-called \emph{active mesh}
\begin{align} 
  \mcT_h &= \{ T \in \widetilde{\mcT}_{h} \st T \cap \Omega^{\circ} \neq \emptyset \},
  \label{eq:active-mesh-bvp}
\end{align}
and its submesh $\mcT_{\Gamma}$ consisting of all elements cut by the boundary,
\begin{align} 
  \mcT_{\Gamma} &= \{ T \in\mcT_{h} \st T \cap \Gamma \neq \emptyset \}.
  \label{eq:cut-elements-mesh-bvp}
\end{align}
Note that since the elements $\{T\}$ are
closed by definition, the active mesh $\mcT_h$ still covers ${\Omega}$.
The set of interior faces in the active background mesh is given by
 \begin{align} 
   \mcF_h &= \{ F =  T^+ \cap T^- \st T^+, T^- \in \mcT_h \; \wedge \; T^+ \neq T^-\}.
  \label{eq:faces-interior-bvp}
 \end{align}
\begin{figure}[htb]
  \begin{center}
    \begin{minipage}[t]{0.40\textwidth}
    \vspace{0pt}
    \includegraphics[width=1.0\textwidth]{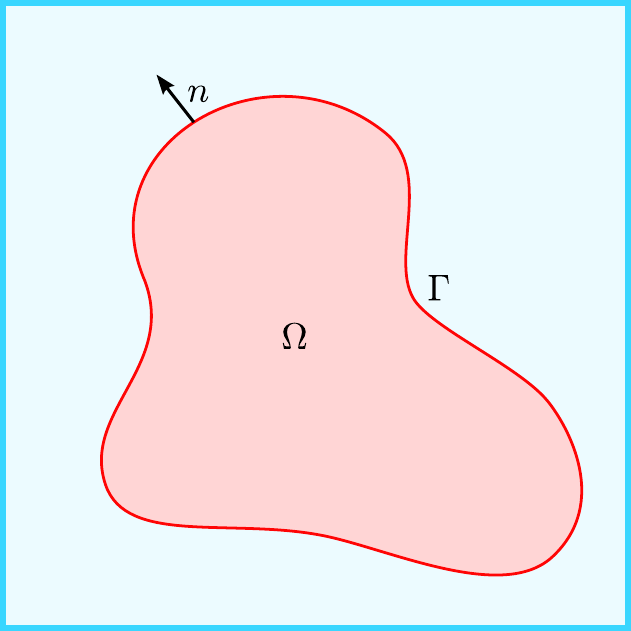}
  \end{minipage}
  \hspace{0.02\textwidth}
  \begin{minipage}[t]{0.40\textwidth}
    \vspace{0pt}
    \includegraphics[width=1.0\textwidth]{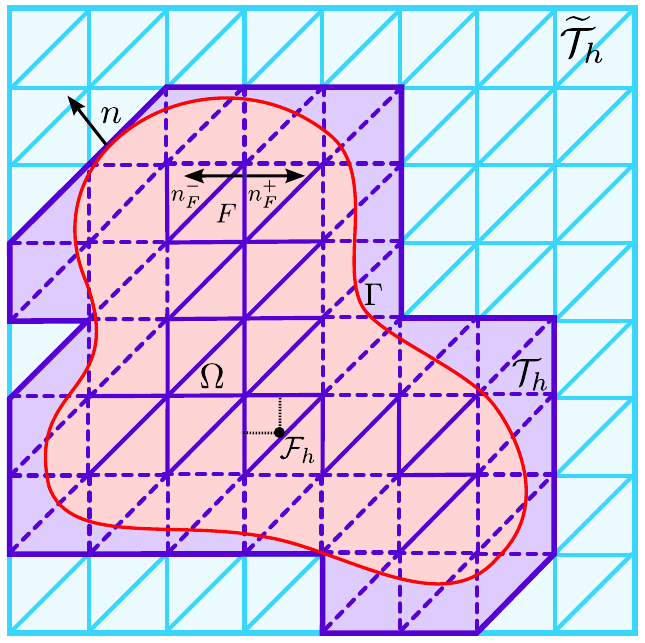}
  \end{minipage}
\end{center}
\caption{Computational domains for the problems~(\ref{eq:adr-problem}) and (\ref{eq:time-dep-adr-problem}).
  (Left) Physical domain $\Omega$ with boundary $\Gamma$ and outer normal $n$.
  (Right) Background mesh and active
  mesh used to define the approximation space.
  Dashed faces are used to define face based ghost penalties
  discussed in Section~\ref{ssec:ghost-penalty-real}.}
  \label{fig:domain-set-up}
\end{figure}
To keep the technical details the forthcoming numerical analysis at a moderate level,
we make two reasonable geometric assumptions on $\mcT_h$ and $\Gamma$.
\def\theassumption{G\arabic{assumption}}
\begin{assumption}
  The mesh $\mcT_h$ is quasi-uniform.
  \label{ass:mesh-quasi-uniformity}
\end{assumption}
\begin{assumption}
  \label{ass:fat-intersection-property}
  For $T\in \mcT_{\Gamma}$ there is an element $T'$ in $\omega(T)$
  with a ``fat'' intersection such that
  \begin{align} | T' \cap \Omega |_{d}
    \geqslant c_s |T'|_d
  \label{eq:fat-intersect-prop}
 \end{align}
for some mesh independent $c_s > 0$.
Here, $\omega(T)$ denotes the set of elements sharing at least one
node with $T$.
\end{assumption}
\begin{remark}
  Quasi-uniformity of $\mcT_h$ is assumed mostly for notational 
  convenience. Except for the condition number estimates, all derived estimates
  can be easily localized to element or patch-wise estimates.
\end{remark}
\begin{remark}
  The ``fat'' intersection property guarantees that
  $\Gamma$ is reasonably resolved by the active mesh $\mcT_h$.
  It is automatically 
  satisfied if, e.g.,  $\Gamma \in C^2$ and $h$ is small enough, that is; if
  $h \lesssim \min_{\{1,\ldots,d-1\}} \|\tfrac{1}{\kappa_i}\|_{L^{\infty}(\Gamma)}$
  where $\{\kappa_1, \ldots,\kappa_{d-1}\}$ are the  principal curvatures of $\Gamma$,
  see~\cite{BurmanGuzmanSanchezEtAl2017}.
\end{remark}
On the active mesh $\mcT_h$, we define the discrete function space $V_h$ as the broken polynomial space
of order~$p$,
\begin{align}
  V_h
  \coloneqq \PP_p(\mcT_h)
  \coloneqq \bigoplus_{T \in \mcT_h} \PP_p(T).
\label{eq:Vsh-def-bvp}
\end{align}

To formulate our cut discontinuous Galerkin discretization of the weak problem~(\ref{eq:weak-form-cont-problem}),
we recall the usual definitions of the averages
\begin{align}
\avg{\sigma}|_F &= \dfrac{1}{2} (\sigma_F^{+} + \sigma_F^{-}),
  \label{eq:mean-std-def-F}
  \\
  \avg{n_F \cdot \sigma }|_F &=
                             \dfrac{1}{2} n_F \cdot (\sigma_F^{+} + \sigma_F^{-}),
  \label{eq:mean-def-F}
\end{align}
and the jump across an interior facet $F \in
\mcF_h$, 
\begin{align}
  \jump{w}|_F &= w_F^{+} - w_F^{-}.
\end{align}
Here,  $w^\pm_F(x)= \lim_{t\rightarrow 0^+} w(x \mp t n_F)$ for some chosen unit facet normal $n_F$ on $F$.
\begin{remark}
  To keep the notation at a moderate level, we will from hereon drop any subscripts indicating
  whether a normal belongs to a face $F$ or to the boundary $\Gamma$ 
  as it will be clear from context.
\end{remark}
With these definitions in place,
our cut discontinuous Galerkin method
for advection-reaction problem~\eqref{eq:adr-problem} can be formulated as follows:
find $u_h \in V_h$ such that
  $\foralls v \in V_h$
\begin{align}
  A_h(u_h,v) \coloneqq a_h(u_h, v) + g_h(u_h, v) = l_h(v).
  \label{eq:cutDGM}
\end{align}
The discrete bilinear
forms $a_h(\cdot, \cdot)$ and $l_h(\cdot)$ represent the classical
discrete, discontinuous Galerkin counterparts
of~(\ref{eq:weak-form-cont-problem}) defined on mesh elements and
faces restricted to the physical domain $\Omega$.
More precisely, for $v, w \in V_h$, we let
\begin{align}
  a_h(v,w) 
  &= (b\cdot\nabla v +  cv,w)_{\mcT_h \cap \Omega} 
    - (b\cdot n v, w)_{\Gamma^-}
    \nonumber
  \\
  &\quad
  - (b\cdot n\jump{v},\avg{w})_{\mcF_h \cap \Omega}
  + \dfrac{1}{2} (|b\cdot n| \jump{v},\jump{w})_{\mcF_h \cap \Omega},
  \label{eq:ah-def}
  \\
  l_h(w) 
  &=
  (f,w)_{\Omega} - (b\cdot n g, w)_{\Gamma^-}.
  \label{eq:lh-def}
\end{align}
The main difference to the classical discontinuous Galerkin method
for Problem~\eqref{eq:adr-problem},
cf.~\cite{BrezziMariniSueli2004,DiPietroErn2012}, is the appearance of an
additional stabilization form $g_h$, also known as \emph{ghost penalty}.
The role of the ghost penalty is to ensure that the favorable
stability and approximation properties of the classical upwind stabilized
discontinuous Galerkin method carry over to the unfitted mesh scenario.
The precise requirements on $g_h$ will be formulated as a result of
the theoretical analysis performed in the remaining sections,
but we already point out that
since Problem~\eqref{eq:adr-problem}
involves an advection and a reaction term, it will be natural 
to assume that $g_h$  can be decomposed in a reaction related ghost penalty $g_c$
and an advection related ghost penalty $g_b$, both of which we assume to be symmetric,
\begin{align}
  g_h(v,w) = g_c(v,w) +  g_b(v,w) \quad \foralls v, w \in V_h.
  \label{eq:gb_gc_intro}
\end{align}

In the same manner, the CutDG method for the time-dependent problem~\eqref{eq:time-dep-adr-problem} reads, find $u_h$ such that for each fix $t\in(0, T)$, $u_h\in V_h$ and
\begin{subequations}
\label{eq:time-dep-cutDGM}
\begin{align}
  M_h \left(\partial_t u_h,v \right) + A_h(u_h,v) & = l_h(v), \quad \forall v\in V_h, \\
  \evaluated{u_h}{t=0} & = \pi_{M_h} u_0. \label{eq:initial-condition-cutDGM}
\end{align}
\end{subequations}
Here, $M_h$ denotes the stabilized $L^2(\Omega)$ inner product:
\begin{equation}
M_h(v,w)= (v,w)_\Omega + g_m(v,w), \quad v, w \in V_h,
\label{eq:stabilized_mass_form}
\end{equation}
where the only difference between $g_m$ and $g_c$ is a scaling by $c_0$:
\begin{equation}
g_c(v,w) = \tau_{c}^{-1} g_m(v,w), \quad v, w \in V_h.
\label{eq:mass_stabilization}
\end{equation}
In \eqref{eq:initial-condition-cutDGM}, $\pi_{M_h}: L^2(\Omega) \to V_h$ denotes the stabilized $L^2$ projection.
This projection is defined as the solution to the problem:
find $\pi_{M_h} u = \tilde{u}_h \in V_h$ such that
\begin{equation}
M_h(\tilde{u}_h,v)= (u,v)_\Omega, \quad \forall v \in V_h.
\label{eq:stabilized-L2-proj}
\end{equation}

  \begin{remark}
    For the sake of simplicity, we shall in the following sections restrict the 
    analysis to the case when the mesh consists of simplices.
    We also restrict the analysis to the method~\eqref{eq:cutDGM} for the stationary
    problem~\eqref{eq:adr-problem}, since this is our main focus.
    However, in Section~\ref{sec:time-dependent-convergence} we shall by
    numerical experiments verify that the method~\eqref{eq:time-dep-cutDGM} works
    for the time-dependent problem~\eqref{eq:time-dep-adr-problem} using quadrilaterals.
\end{remark}

\section{Norms, useful inequalities and approximation properties}
\label{ssec:useful-inequalities}
Following the presentation in~\cite{DiPietroErn2012},
we introduce a reference velocity $b_c$ and a reference time $\tau_c$,
\begin{align}
  b_c  = \| b \|_{0, \infty, \Omega},
  \qquad
 \tau_c^{-1} = \| c \|_{0,\infty,\Omega} + |b|_{1,\infty,\Omega},
  \label{eq:bc_tauc-def}
\end{align}
and define the central flux, upwind and the scaled streamline diffusion norm
by setting
\begin{align}
  \tn v \tncf^2 &= \tau_c^{-1}\|v\|_{\Omega}^2 +  \dfrac{1}{2} \| |b\cdot n|^{\onehalf} v \|_{\Gamma}^2,
  \label{eq:norm-cf-def}
  \\
  \tn v \tnup^2 &= \tn v \tncf^2 + \dfrac{1}{2}\| |b\cdot n|^{\onehalf} \jump{v}\|_{\mcF_h \cap \Omega}^2,
  \label{eq:norm-up-def}
  \\
  \tn v \tnsd^2 &= \tn v \tnup^2 + \| \phi_b^{\onehalf}  b \cdot \nabla v \|_{\mcT_h \cap \Omega}^2,
  \label{eq:norm-sd-def}
\end{align}
respectively. Thanks to the assumed quasi-uniformity, the global scaling factor $\phi_b$ is given by
\begin{align}
    \phi_b &= h/b_c.
  \label{eq:}
\end{align}
For each norm $\tn \cdot \tn_{t}$ with $t \in \{\mathrm{cf}, \mathrm{up}, \mathrm{sd}  \}$,
its ghost penalty enhanced version is given by
\begin{align}
  \tn v \tn_{t, h}^{2}  = 
  \tn v \tn_{t}^2 + |v|_{g_h}^2.
\end{align}
To simplify a number of estimates, we will also use
a slightly stronger norm than $\tn \cdot \tnsdh$, namely
\begin{align}
    \tn v \tnsdhast^2 &= \tn v \tncf^2 + \| \phi_b^{\onehalf}  b \cdot \nabla v \|_{\mcT_h \cap \Omega}^2
                       + b_c \| v \|_{\partial \mcT_h \cap \Omega}^2 + |v|_{g_h}^2.
    \label{eq:norm-sdhast-def}
\end{align}
Finally, we assume as in~\cite{DiPietroErn2012} that the
mesh~$\mcT_h$ is sufficiently resolved such that the inequality
\begin{align}
  h \leqslant b_c \tau_c
  \label{eq:mesh-resolution}
\end{align}
is satisfied, which means that Problem~\eqref{eq:adr-problem} can be considered 
as advection dominant on the element level, and that the advective velocity field 
$b$ is sufficiently resolved in the sense that the following estimates hold,
\begin{align}
  \| c \|_{0,\infty, \Omega} h \leqslant b_c,
  \qquad
  | b |_{1,\infty, \Omega} \leqslant \dfrac{b_c}{h}.
  \label{eq:advect-dominant-and-resolved}
\end{align}

Before we turn to the stability and a prior error analysis in
Section~\ref{ssec:stab-prop}, we
briefly review some useful inequalities needed later
and explain how a suitable approximation operator can be constructed on the active mesh $\mcT_h$.
Recall that for
$v \in H^1(\mcT_h)$, the local trace inequalities of the form
  \begin{align}
    \label{eq:trace-inequality}
    \|v\|_{\partial T}
    &\lesssim
    h^{-\onehalf} \|v\|_{T} +
    h^{\onehalf}  \|\nabla v\|_{T}
      \quad \foralls T\in \mcT_h,
    \\
    \|v\|_{\Gamma \cap T}
    &\lesssim
      h^{-\onehalf} \|v\|_{T}
      + h^{\onehalf} \|\nabla v\|_{T}
    \quad \foralls T \in \mcT_h,
    \label{eq:trace-inequality-cut}
  \end{align}
  hold, see~\cite{HansboHansboLarson2003} for a proof of the second one.
  For $v_h \in V_h$, let $D^j$ be the $j$-th total derivative, then the following
  inverse inequalities  hold,
\begin{alignat}{3}
  \| D^j v_h \|_{T \cap \Omega}
  &\lesssim 
    h ^{i-j}\| D^i v_h \|_{T}
   && \quad  \foralls T \in \mcT_h, &&\quad 0 \leqslant i \leqslant j,
  \label{eq:inverse-est-cut-T}
  \\
  \| D^j v_h \|_{\partial T \cap \Omega}
  &\lesssim
  h ^{i-j-\onehalf}\| D^i v_h \|_{T}
   && \quad  \foralls T \in \mcT_h, &&\quad 0 \leqslant i \leqslant j,
  \label{eq:inverse-est-cut-F}
  \\
  \| D^j v_h \|_{\Gamma \cap T}
  &\lesssim
  h ^{i-j-\onehalf}\| D^i v_h \|_{T}
   && \quad  \foralls T \in \mcT_h, &&\quad 0 \leqslant i \leqslant j,
  \label{eq:inverse-est-cut-Gamma}
\end{alignat}
see~\cite{HansboHansboLarson2003} for a proof of the last one.
Next, to define a suitable approximation operator, we depart from the $L^2$-orthogonal projection
  $\pi_h : L^2(\mcT_h) \to V_h$
  which for  $T \in \mcT_h$ and $F \in \mcF_T$ satisfies the error estimates
  \begin{alignat}{3}
    | v - \pi_h v |_{T,l}  &\lesssim h_T^{r-l} | v |_{r,T},
    && \quad
     0 \leqslant l \leqslant r,
    \\
    | v - \pi_h v |_{F,l}  &\lesssim h_T^{r-l-\onehalf} | v |_{r,T},
    && \quad 0  \leqslant l \leqslant r - 1/2,
  \end{alignat}
 where $r \coloneqq \min\{s, p+1\}$ and whenever $v \in H^s(T)$. Now to lift a function $v \in H^s(\Omega)$ to
 $H^s(\Omega_{h}^e)$, where we for the moment use the notation 
 $\Omega_h^e = \bigcup_{T \in \mcT_h} T$,
  we recall that for Sobolev spaces $W^{m,q}(\Omega)$, $0 < m \leqslant \infty, 1 \leqslant q \leqslant \infty$,
  there exists a bounded extension operator satisfying
  \begin{align}
    (\cdot)^e: W^{m,q}(\Omega) \to W^{m,q}(\RR^d), \quad \|v^e \|_{m,q, \RR^d} \lesssim \| v \|_{m,q,\Omega}
  \end{align}
  for $u \in W^{m,q}(\Omega)$, see~\cite{Stein1970} for a proof.
  We can now define an unfitted $L^2$ projection variant $\pi_h^e: H^s(\Omega) \to V_h$ by setting
  \begin{align}
   \pi_h^e v \coloneqq \pi_{h} v^e.
  \end{align}
  Note that this $L^2$-projection is slightly perturbed in the
  sense that it is not orthogonal on $L^2(\Omega)$ but rather on
  $L^2(\Omega_{h}^e)$. Combining the local approximation properties of $\pi_h$ with the stability
  of the extension operator $(\cdot)^e$, we see immediately that $\pi_{h}^e$
  satisfies the global error estimates
  \begin{alignat}{3}
    \| v - \pi_h^e v \|_{\mcT_h,l}  &\lesssim h^{r-l} \| v \|_{r,\Omega},
    && \quad  0 \leqslant l \leqslant r,
    \label{eq:interpol-est-cut-T}
   \\
    \| v - \pi_h^e v \|_{\mcF_h,l}  &\lesssim h^{r-l-\onehalf} \| v \|_{r,\Omega},
    && \quad  0 \leqslant l \leqslant r-1/2,
    \label{eq:interpol-est-cut-F}
    \\
    \| v - \pi_h^e v \|_{\Gamma,l}  &\lesssim h^{r-l-\onehalf} \| v \|_{r,\Omega},
    && \quad  0 \leqslant l \leqslant r-1/2.
    \label{eq:interpol-est-cut-Gamma}
  \end{alignat}
\begin{remark}
It would have been possible to use the projection $\pi_{M_h}$ from~\eqref{eq:stabilized-L2-proj},
instead of $\pi_h^e$, in the analysis of the method.
This is done in for example~\cite{BurmanClausMassing2015}.
However, using $\pi_h^e$ will simplify the following analysis slightly.
\end{remark}
\begin{remark}
In \eqref{eq:interpol-est-cut-T}--\eqref{eq:interpol-est-cut-Gamma}, 
we have intentionally neglected the superscript, $e$, on $v$, and assumed that 
use of the extension operator, $(\cdot)^e$, is implied.
In order to simplify the notation, we shall do so also in the following.
\end{remark}

To assure that the consistency error caused by $g_h$ does not affect the convergence order,
we require that the ghost penalties $g_b$ and $g_c$ are weakly consistent in the following sense.
\setcounter{assumption}{0}
\def\theassumption{HP\arabic{assumption}}
\begin{assumption}[Weak consistency estimate]
  \label{ass:weak-consistency}
  For $u \in H^s(\Omega)$ and $r = \min\{s, p+1\}$,
  the semi-norms $|\cdot|_{g_b}$ and $|\cdot|_{g_c}$ satisfy the estimates
  \begin{align}
    |\pi_h^e u|_{g_b} &\lesssim b_c^{\onehalf} h^{r-\onehalf} \|u\|_{r,\Omega},
    \\
    |\pi_h^e u|_{g_c} &\lesssim \tau_c^{-\onehalf} h^{r} \|u\|_{r,\Omega}.
  \end{align}
\end{assumption}

With these assumptions, the approximation error of the unfitted $L^2$ projection
can be quantified with respect to the $\tn \cdot \tnsdhast$ norm.
\begin{proposition}
Let $u \in H^{s}(\Omega)$ and assume that $V_h = \PP_p(\mcT_h)$ with $p \geqslant 0$. Then
for $r = \min\{s, p+1\}$, the approximation error of $\pi_h^e$ satisfies
  \begin{align}
    \tn u - \pi_h^e u \tnsdhast \lesssim
   (\tau_c^{-\onehalf}h^{\onehalf} 
    +b_c^{\onehalf})h^{r-\onehalf}\| u \|_{r, \Omega}.
    \label{eq:projection-error}
  \end{align}
\end{proposition}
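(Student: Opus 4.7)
My plan is to split the squared norm $\tn u - \pi_h^e u \tnsdhast^2$ into its four constituent contributions and bound each one using the local/global approximation properties \eqref{eq:interpol-est-cut-T}--\eqref{eq:interpol-est-cut-Gamma} of $\pi_h^e$, the trace inequality \eqref{eq:trace-inequality}, and the weak consistency assumption \textbf{HP1}. The overall target is to show that each piece is controlled by either $\tau_c^{-1} h^{2r} \|u\|_{r,\Omega}^2$ or $b_c h^{2r-1} \|u\|_{r,\Omega}^2$, which after taking square roots collapses into the stated bound.

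Concretely, for the $\tncf$ part I would use \eqref{eq:interpol-est-cut-T} with $l=0$ to bound $\tau_c^{-1}\|u-\pi_h^e u\|_{\Omega}^2 \lesssim \tau_c^{-1} h^{2r} \|u\|_{r,\Omega}^2$, and estimate $\||b\cdot n|^{1/2}(u-\pi_h^e u)\|_{\Gamma}^2 \le b_c \|u-\pi_h^e u\|_{\Gamma}^2 \lesssim b_c h^{2r-1}\|u\|_{r,\Omega}^2$ from \eqref{eq:interpol-est-cut-Gamma}. For the streamline-diffusion term, using $\phi_b = h/b_c$ and $|b|\le b_c$ yields $\|\phi_b^{1/2} b\cdot \nabla(u-\pi_h^e u)\|_{\mcT_h\cap\Omega}^2 \le h\, b_c \|\nabla(u-\pi_h^e u)\|_{\mcT_h}^2$, which is $\lesssim b_c h^{2r-1}\|u\|_{r,\Omega}^2$ by \eqref{eq:interpol-est-cut-T} with $l=1$. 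For the element-boundary contribution, I would apply the elementwise trace inequality \eqref{eq:trace-inequality} before invoking the $L^2$ and $H^1$ approximation estimates of $\pi_h^e$, obtaining $b_c\|u-\pi_h^e u\|_{\partial\mcT_h\cap\Omega}^2 \lesssim b_c(h^{-1}h^{2r}+h\,h^{2r-2})\|u\|_{r,\Omega}^2 \lesssim b_c h^{2r-1}\|u\|_{r,\Omega}^2$.

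For the ghost-penalty term $|u-\pi_h^e u|_{g_h}^2 = |u-\pi_h^e u|_{g_b}^2 + |u-\pi_h^e u|_{g_c}^2$, the key observation is that the ghost penalty seminorms act only on the discrete object, so $|u-\pi_h^e u|_{g_b}$ is interpreted as $|\pi_h^e u|_{g_b}$ (equivalently, the semi-norm vanishes on sufficiently smooth extensions of $u$); then Assumption \textbf{HP1} directly gives $|\pi_h^e u|_{g_b}^2 \lesssim b_c h^{2r-1}\|u\|_{r,\Omega}^2$ and $|\pi_h^e u|_{g_c}^2 \lesssim \tau_c^{-1} h^{2r}\|u\|_{r,\Omega}^2$, fitting into the same two scalings as the other terms.

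Summing the estimates and using $\sqrt{a+b}\le \sqrt{a}+\sqrt{b}$ yields
\begin{align*}
\tn u - \pi_h^e u\tnsdhast^2 \lesssim \bigl(\tau_c^{-1} h + b_c\bigr) h^{2r-1}\|u\|_{r,\Omega}^2,
\end{align*}
which is exactly \eqref{eq:projection-error}. The only slightly delicate step is the interpretation of the ghost-penalty contributions from the continuous part $u$; everything else reduces to routine trace/inverse bookkeeping. No part of the argument is difficult, but one must be careful to group the correct factors of $b_c$, $\tau_c^{-1}$ and $h$ so that the two rates $\tau_c^{-1}h^{2r}$ and $b_c h^{2r-1}$ combine exactly into the factor $(\tau_c^{-1/2} h^{1/2}+b_c^{1/2})h^{r-1/2}$ appearing in the statement.
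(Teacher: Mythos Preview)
Your proposal is correct and follows essentially the same approach as the paper: both decompose $\tn e_\pi\tnsdhast^2$ into its constituent terms, bound the $\tau_c^{-1}\|e_\pi\|_\Omega^2$ contribution via \eqref{eq:interpol-est-cut-T}, group the boundary, element-boundary and streamline-diffusion contributions into a single $b_c h^{2r-1}\|u\|_{r,\Omega}^2$ estimate via \eqref{eq:interpol-est-cut-T}--\eqref{eq:interpol-est-cut-Gamma} and the trace inequality, and invoke Assumption~\textbf{HP1} for the ghost-penalty part. Your remark that $|u-\pi_h^e u|_{g_h}$ must be read as $|\pi_h^e u|_{g_h}$ (the smooth extension $u^e$ contributing nothing) is precisely the tacit identification the paper makes when it writes ``Thanks to Assumption~\ref{ass:weak-consistency}, we only need to estimate $I$--$IV$''; you have simply made this step explicit.
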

\begin{proof}
  Set $e_{\pi} = u - \pi_h^e u$, then by definition
  \begin{align}
    \tn e_{\pi} \tnsdhast^2
    &= \tau_c^{-1} \| e_{\pi} \|_{\Omega}^2
    + \donehalf \||b\cdot n|^{\onehalf} e_{\pi}\|_{\Gamma}^2
    + b_c \| e_{\pi} \|_{\partial \mcT_h \cap \Omega}^2
      + \| \phi_b^{\onehalf} \bnabla e_{\pi} \|_{\Omega}^2
      + | e_{\pi} |^2_{g_h}
    \\
    &= I + II + III + IV +  | e_{\pi} |^2_{g_h}.
  \end{align}
  Thanks to Assumption~\ref{ass:weak-consistency}, we only need to estimate $I$--$IV$.
  Using~\eqref{eq:interpol-est-cut-T}, $I$ is bounded by
  \begin{align}
    I  &\lesssim \tau_c^{-1} h^{2r} \|u\|_{r,\Omega}^2,
  \end{align}
  while the last three terms can be bounded by combining~\eqref{eq:interpol-est-cut-T}--\eqref{eq:interpol-est-cut-Gamma} leading to
  \begin{align}
    II + III + IV
    &\lesssim
      b_c  \|h^{-\onehalf} e_{\pi}\|_{\mcT_h}^2
      + b_c  \|h^{\onehalf} \nabla e_{\pi}\|_{\mcT_h}^2
      \lesssim b_c  h^{2r - 1} \| u \|_{r, \Omega}^2.
  \end{align}
\end{proof}

\section{Stability analysis}
\label{ssec:stab-prop}
The goal of this section is to show that the proposed stabilized CutDG
method~(\ref{eq:cutDGM}) satisfies a discrete inf-sup condition
with respect to the $\tn \cdot \tnsdh$ norm, similar to the results
presented in~\cite{ErnGuermond2006a,DiPietroErn2012} for the corresponding classical
fitted discontinuous Galerkin method. In the unfitted case, one main challenge is
that the proof of the inf-sup condition involves various inverse estimates which in turn
forces us to gain control over the considered $\tn \cdot \tnsd$ norm on the
entire active mesh.

We start our theoretical analysis 
by recalling that a
partial integration of the element contributions in bilinear
form~\eqref{eq:ah-def} leads to a mathematically
equivalent expression for $a_h$, namely,
\begin{align}
  a_h(v,w) 
  &= ((c - \nabla \cdot b) v,w)_{\mcT_h \cap \Omega} 
  - (v, b\cdot\nabla w)_{\mcT_h \cap \Omega} 
  + (b\cdot n v, w)_{\Gamma^+}
  \nonumber
  \\
  &\quad
  + (b\cdot n\avg{v},\jump{w})_{\mcF_h \cap \Omega}
  + \dfrac{1}{2} (|b\cdot n| \jump{v},\jump{w})_{\mcF_h \cap \Omega}.
  \label{eq:ah-dual}
\end{align}
By taking the average of~\eqref{eq:ah-def} and~\eqref{eq:ah-dual},
we obtain the well-known decomposition of $a_h(\cdot, \cdot)$ into a
symmetric and skew-symmetric part,
\begin{align}
  a_h(v,w)  &= a_h^{\mathrm{sy}}(v,w)
              +  a_h^{\mathrm{sk}}(v,w),
              \label{eq:ah-avg}
              \intertext{where}
  a_h^{\mathrm{sy}}(v,w) &=
                           (c - \tfrac{1}{2}\nabla \cdot b) v,w)_{\mcT_h \cap \Omega}
  + \dfrac{1}{2} (|b\cdot n| v, w)_{\Gamma}
                             + \dfrac{1}{2} (|b\cdot n| \jump{v},\jump{w})_{\mcF_h \cap \Omega},
                             \label{eq:ah-symm}
                             \\
  a_h^{\mathrm{sk}}(v,w) &=
    \dfrac{1}{2}
    \bigl(
    (b\cdot\nabla v,w)_{\mcT_h \cap \Omega}
  - (v, b\cdot\nabla w)_{\mcT_h \cap \Omega} 
  \\
  &\quad
  -(b\cdot n\jump{v},\avg{w})_{\mcF_h \cap \Omega}
  + (b\cdot n\avg{v},\jump{w})_{\mcF_h \cap \Omega}
    \bigr).
                             \label{eq:ah-skew}
\end{align}
Consequently, the total bilinear form $A_h(\cdot, \cdot)$ is discretely coercive with respect to $\tn \cdot \tnuph$:
\begin{lemma}
  \label{lem:discrete-coerc}
  For $v \in V_h$ it holds
  \begin{align}
    c_0 \tau_c\tn v \tnuph^2 \lesssim A_h(v_h, v_h).
  \label{eq:discrete-coerc}
  \end{align}
\end{lemma}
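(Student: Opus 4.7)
The plan is to proceed purely algebraically, exploiting the symmetric/skew decomposition of $a_h$ already provided in~\eqref{eq:ah-avg}--\eqref{eq:ah-skew}. First, I would observe that the skew-symmetric part vanishes on the diagonal, i.e.\ $a_h^{\mathrm{sk}}(v_h,v_h) = 0$, since each of the four contributions in~\eqref{eq:ah-skew} cancels against its transpose when the test function equals the trial function. This reduces $A_h(v_h,v_h)$ to $a_h^{\mathrm{sy}}(v_h,v_h) + |v_h|_{g_h}^2$, so only the symmetric, manifestly nonnegative part has to be bounded from below.

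Second, I would invoke the essential infimum assumption~\eqref{eq:c0-def} to replace the volumetric reactive contribution by $c_0 \|v_h\|_\Omega^2$. Together with the boundary term $\tfrac{1}{2}\||b\cdot n|^{1/2} v_h\|_\Gamma^2$, the jump term $\tfrac{1}{2}\||b\cdot n|^{1/2}\jump{v_h}\|_{\mcF_h\cap\Omega}^2$, and the ghost penalty $|v_h|_{g_h}^2$, this yields a lower bound whose structure exactly matches $\tn v_h \tnuph^2$ up to a common scaling factor in front of the reactive part. Concretely, the plan is to reorganize the resulting inequality as
\begin{align*}
  A_h(v_h,v_h) \geqslant c_0 \|v_h\|_\Omega^2
  + \tonehalf \||b\cdot n|^{\onehalf} v_h\|_\Gamma^2
  + \tonehalf \||b\cdot n|^{\onehalf} \jump{v_h}\|_{\mcF_h\cap\Omega}^2
  + |v_h|_{g_h}^2.
\end{align*}

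Third, the quantitative book-keeping is to compare this with $c_0\tau_c \tn v_h \tnuph^2$. The key inequality is $c_0 \tau_c \leqslant 1$, which follows directly from the definitions in~\eqref{eq:bc_tauc-def} since $c_0 \leqslant \|c\|_{0,\infty,\Omega} + \tfrac{1}{2}|b|_{1,\infty,\Omega} \leqslant \tau_c^{-1}$. Multiplying $\tn v_h \tnuph^2$ by $c_0\tau_c$ produces $c_0\|v_h\|_\Omega^2$ for the volumetric piece (exact match) and factors of $c_0\tau_c \leqslant 1$ in front of the boundary, jump and ghost penalty contributions (absorbed by the $\tfrac{1}{2}$ and $1$ appearing above). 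Collecting everything closes the estimate.

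I do not anticipate any real technical obstacle here: the argument needs neither the weak consistency Assumption~\ref{ass:weak-consistency} nor any cut-sensitive inverse inequality, and in particular it is robust uniformly in the cut configuration because the $\mcT_h \cap \Omega$ integration domain carries through untouched. The only mildly subtle point is the scalar relation $c_0 \tau_c \leqslant 1$, which is what forces the reactive scaling on the left-hand side of the claim and explains why the constant in the inf-sup-type coercivity inequality scales as $c_0\tau_c$ rather than~$1$.
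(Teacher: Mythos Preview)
Your proposal is correct and follows essentially the same argument as the paper: exploit the symmetric/skew decomposition so that $A_h(v,v) = a_h^{\mathrm{sy}}(v,v) + |v|_{g_h}^2$, apply the lower bound~\eqref{eq:c0-def} to the reactive term, and then use $c_0\tau_c \lesssim 1$ to absorb the scaling into the upwind-norm definition. The paper's proof is a one-line version of exactly this computation.
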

\begin{proof}
  As in the classical fitted mesh case, the proof follows simply from
  observing that
  \begin{align}
    A_h(v,v) &= a_h(v,v) + g_h(v,v) = a_h^{\mathrm{sy}}(v,v) + g_h(v, v)
    \\
    &\geqslant 
    c_0 \| v \|_{\mcT_h \cap \Omega}^2
    + \dfrac{1}{2} \| |b\cdot n|^{\onehalf} v\|_{\Gamma}^2
    + \dfrac{1}{2} \| |b\cdot n|^{\onehalf} v\|_{\mcF_h \cap \Omega}^2
    + |v|_{g_h}^2
    \gtrsim c_0 \tau_c \tn v \tnuph^2,
  \end{align}
  noting that  $ 0 < c_0 \tau_c \lesssim 1$ thanks to assumption~\eqref{eq:c0-def}
  and the definition of $\tau_c$, cf.~\eqref{eq:bc_tauc-def}.
\end{proof}
\begin{remark}
  \label{rem:po0-stab}
  We point out that for the advection-reaction problem, the ghost
  penalty~$g_h$ is not needed to establish discrete coercivity in
  either the~\mbox{$\tn \cdot \tnup$} or ~\mbox{$\tn \cdot \tnuph$} norm, in contrast to the CutDG method for the Poisson
  problem presented in~\cite{GuerkanMassing2018}.
  Nevertheless, we will see that $g_h$ is required to prove an inf-sup
  condition for the discrete problem~\eqref{eq:cutDGM} using the
  stronger~\mbox{$\tn \cdot \tnsdh$} norm.
\end{remark}

Following the derivation of the inf-sup condition in the fitted mesh
case, see for instance~\cite{DiPietroErnGuermond2008}, a proper test
function $v \in V_h$ needs to be constructed which establishes
control over the streamline derivative. To construct such a suitable
test function, we assume the existence of discrete vector field
$b_h \in \PP_0(\mcT_h)$ satisfying
\begin{align}
  \| b_h^0 - b \|_{0, \infty, T} \lesssim h_T | b |_{1,\infty,T},
  \qquad
  \| b_h^0 \|_{0,\infty, T} \lesssim \| b^e\|_{0,\infty,T}.
  \label{eq:bh-prop}
\end{align}
Note that for $b \in [W^{1,\infty}(\Omega)]$ such a discrete vector field
$b_h^0$ can always be constructed. 
To establish the inf-sup result, the ghost penalties $g_b$ and $g_c$ for the discretized advection-reaction
problem are supposed to satisfy the following assumptions.  
\def\theassumption{HP\arabic{assumption}}
\begin{assumption}
  \label{ass:ghost-penalty-l2-ext}
  The ghost penalty $g_c$ extends the $L^2$ norm from the physical domain $\Omega$
  to the active mesh in the sense that
  for $v_h \in V_h$, it holds that
\begin{gather}
  \tau_c^{-1} \| v \|_{\mcT_h}^2
   \lesssim
  \tau_c^{-1} \| v \|_{\Omega}^2 + |v|_{g_c}^2
   \lesssim
  \tau_c^{-1} \| v \|_{\mcT_h}^2.
  \label{eq:ghost-penalty-tauc_l2norm-ext}
\end{gather}    
\end{assumption}
\begin{assumption}
  \label{ass:ghost-penalty-sdnorm-ext}
  The ghost penalty $g_b$ extends the streamline diffusion semi-norm from the physical domain $\Omega$
  to the active mesh in the sense that
  for $v_h \in V_h$, it holds that
\begin{gather}    
  \|\phi_b^{\onehalf} \bnabla v \|_{\mcT_h}^2 
  \lesssim
  \|\phi_b^{\onehalf} \bnabla v \|_{\Omega}^2 + |v|_{g_b}^2
  + \tau_c^{-1}\| v \|_{\Omega}^2 + |v|_{g_c}^2
  \lesssim
  \|\phi_b^{\onehalf} \bnabla v \|_{\mcT_h}^2 
  + \tau_c^{-1} \| v \|_{\mcT_h}^2.
  \label{eq:ghost-penalty-sdnorm-ext}
\end{gather}
\end{assumption}
\begin{remark}
  At first it might be more natural to assume that  
  \begin{align}
  \|\phi_b^{\onehalf} \bnabla v \|_{\mcT_h}^2  \sim
  \|\phi_b^{\onehalf} \bnabla v \|_{\Omega}^2 + |v|_{g_b}^2
  \end{align}
  is valid for $v \in V_h$ instead of the more convoluted
  estimate~\eqref{eq:ghost-penalty-sdnorm-ext}, but the forthcoming
  numerical analysis as well as the actual design of $g_b$ will
  require to frequently pass between certain locally constructed discrete vector fields $b_h$
  and the original vector field $b$. This is also the content of the
  next lemma.
\end{remark}
\begin{lemma}
  \label{lem:norm_rsb_beta_diff_const}
  Let $v \in V_h$ and let $P \subset \mcT_h$ be an element patch
  with $\diam(P) \sim h$.
  Let $b_h^P$ be a patch-wide defined velocity field satisfying
\begin{align}
  \| b_h^P - b \|_{0, \infty, P} \lesssim h | b^e |_{1,\infty,P},
  \qquad
  \| b_h^P \|_{0,\infty, P} \lesssim \| b^e\|_{0,\infty,P}.
  \label{eq:bhP-prop}
\end{align}
  Then
  \begin{align}
    {\phi}_{b}
    \|b_h^P - b\|_P^2
    \|\nabla v\|_{P}^2
    &\lesssim
    \tau_c^{-1}\| v \|_{P}^2.
      \label{eq:norm_rsb_beta_diff_const}
\end{align}
\end{lemma}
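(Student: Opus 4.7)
The plan is to combine three standard ingredients: the $L^\infty$ bound on the velocity mismatch $b_h^P-b$ coming from \eqref{eq:bhP-prop}, an inverse inequality to exchange $\|\nabla v\|_P$ for $h^{-1}\|v\|_P$ on the piecewise-polynomial function $v$, and finally the mesh resolution hypothesis $|b|_{1,\infty,\Omega}\lesssim b_c/h$ from \eqref{eq:advect-dominant-and-resolved} together with the definition of $\tau_c$ in \eqref{eq:bc_tauc-def} to absorb the remaining factors.

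First I would pull out the velocity difference in $L^\infty$, using the patchwise hypothesis on $b_h^P$, to get
\begin{equation*}
\|b_h^P - b\|_P^2 \,\|\nabla v\|_P^2 \;\lesssim\; \|b_h^P - b\|_{0,\infty,P}^2\,\|\nabla v\|_P^2 \;\lesssim\; h^2|b^e|_{1,\infty,P}^2\,\|\nabla v\|_P^2.
\end{equation*}
Since $v\in V_h$ is piecewise polynomial on the quasi-uniform subpatch $P$ with $\diam(P)\sim h$, I would then apply the elementwise inverse inequality~\eqref{eq:inverse-est-cut-T} (with $i=0$, $j=1$, and $\Omega$ replaced by $T$ itself) summed over the elements in $P$ to obtain $\|\nabla v\|_P^2\lesssim h^{-2}\|v\|_P^2$. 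Multiplying by $\phi_b=h/b_c$ gives
\begin{equation*}
\phi_b \,\|b_h^P-b\|_P^2\,\|\nabla v\|_P^2 \;\lesssim\; \frac{h}{b_c}\,|b^e|_{1,\infty,P}^2\,\|v\|_P^2.
\end{equation*}

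Finally I would split one factor of $|b^e|_{1,\infty,P}$ from the squared seminorm: one copy is bounded by $|b|_{1,\infty,\Omega}\lesssim b_c/h$ using the second inequality in \eqref{eq:advect-dominant-and-resolved}, which cancels the prefactor $h/b_c$; the surviving copy is bounded by $|b|_{1,\infty,\Omega}\le \tau_c^{-1}$ directly from \eqref{eq:bc_tauc-def}. This chain yields
\begin{equation*}
\frac{h}{b_c}\,|b^e|_{1,\infty,P}^2 \;\lesssim\; \frac{h}{b_c}\cdot\frac{b_c}{h}\cdot|b^e|_{1,\infty,P} \;\lesssim\; \tau_c^{-1},
\end{equation*}
and multiplying by $\|v\|_P^2$ closes the estimate.

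There is no real obstacle here; it is a bookkeeping lemma that records how the gap between $b$ and a piecewise constant approximant $b_h^P$ interacts with $\phi_b$. The only subtlety is to resist the temptation to instead prove $\phi_b\|\bnabla v - b_h^P\cdot\nabla v\|_P^2 \lesssim \tau_c^{-1}\|v\|_P^2$ by applying Cauchy–Schwarz in $L^2$ from the outset, which would cost a factor of $h^d$ and fail; using the $L^\infty$ bound on $b_h^P-b$ keeps one factor of $h$ free, which is exactly what the resolution assumption needs in order to absorb $\phi_b = h/b_c$ against the expected scaling $|b|_{1,\infty,\Omega}\sim \tau_c^{-1}$.
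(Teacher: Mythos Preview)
Your proposal is correct and essentially matches the paper's proof: both pull out $\|b_h^P-b\|_{0,\infty,P}$, apply the inverse inequality to $\nabla v$, and then use $|b|_{1,\infty}\le\tau_c^{-1}$ to close. The only cosmetic difference is in how the extra factor $h/b_c$ is absorbed: the paper bounds one copy of $\|b_h^P-b\|_{0,\infty,P}$ by $\|b\|_{0,\infty,P}$ (via the second part of \eqref{eq:bhP-prop}) and uses $\phi_b\|b\|_{0,\infty,P}\lesssim h$, whereas you keep both copies as $h|b|_{1,\infty}$ and then invoke $|b|_{1,\infty}\lesssim b_c/h$ from \eqref{eq:advect-dominant-and-resolved}; both routes are equivalent one-liners.
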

Before we turn to proof 
we note an immediate corollary  of Lemma~\ref{lem:norm_rsb_beta_diff_const}
and Assumption~\ref{ass:ghost-penalty-sdnorm-ext}.
\begin{corollary}
  \label{cor:ghost-penalty-sdnorm-ext-2}
  Let $\mcP_h$ be a collection of patches 
  and assume that the number of patch overlaps is uniformly bounded.
  Let $b_h^P$ and $P \in \mcP_h$ satisfy the assumptions in
  Lemma~\ref{lem:norm_rsb_beta_diff_const}.
  Then
\begin{align}
    \|\phi_b^{\onehalf} (b_h^P -b) \cdot \nabla v \|_{\mcP_h}^2  
    \lesssim
     \tn v \tnuph^2.
  \label{eq:ghost-penalty-sdnorm-ext-2}
\end{align}
  \label{cor:ghost-penalty-sdnorm-ext}
\end{corollary}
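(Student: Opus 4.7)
The plan is to invoke Lemma~\ref{lem:norm_rsb_beta_diff_const} patch by patch and then use the uniform overlap of $\mcP_h$ together with the ghost penalty extension property from Assumption~\ref{ass:ghost-penalty-l2-ext} to pass from the active mesh $\mcT_h$ back to the physical domain $\Omega$.

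First, on a single patch $P \in \mcP_h$, I would pull out the $L^{\infty}$ bound on the velocity fluctuation,
\[
\|\phi_b^{\onehalf}(b_h^P - b)\cdot \nabla v\|_P^2
\leqslant \phi_b \|b_h^P - b\|_{0,\infty,P}^2 \|\nabla v\|_P^2,
\]
and then apply Lemma~\ref{lem:norm_rsb_beta_diff_const}, which (through~\eqref{eq:bhP-prop}, a standard inverse inequality on $\|\nabla v\|_P$, and the mesh resolution hypothesis~\eqref{eq:mesh-resolution}) delivers the patch-wise estimate
\[
\|\phi_b^{\onehalf}(b_h^P - b)\cdot\nabla v\|_P^2 \lesssim \tau_c^{-1}\|v\|_P^2.
\]

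Next I would sum over all patches. Since the number of overlaps of patches in $\mcP_h$ is uniformly bounded by hypothesis, summing the patch-wise bounds yields, up to a fixed constant,
\[
\|\phi_b^{\onehalf}(b_h^P - b)\cdot\nabla v\|_{\mcP_h}^2
\lesssim \sum_{P \in \mcP_h} \tau_c^{-1}\|v\|_P^2
\lesssim \tau_c^{-1}\|v\|_{\mcT_h}^2,
\]
where in the last inequality each element $T \in \mcT_h$ contributes only a bounded number of times. The upper bound in Assumption~\ref{ass:ghost-penalty-l2-ext} then trades the full active-mesh norm for the physical-domain norm plus the reaction ghost penalty,
\[
\tau_c^{-1}\|v\|_{\mcT_h}^2 \lesssim \tau_c^{-1}\|v\|_{\Omega}^2 + |v|_{g_c}^2,
\]
and the right-hand side is clearly majorized by $\tn v\tncfh^2$ and hence by $\tn v\tnuph^2$ by the definition of the ghost-penalty enhanced norms.

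The analytical substance of this bound sits in Lemma~\ref{lem:norm_rsb_beta_diff_const}, so no real obstacle arises in the corollary itself. The only steps that warrant attention are the finite-overlap counting when passing from $\mcP_h$ to $\mcT_h$, which is secured by the assumption on the patch overlaps, and the correct invocation of Assumption~\ref{ass:ghost-penalty-l2-ext}, which is precisely the mechanism controlling the portion of $\|v\|_{\mcT_h}^2$ that lies outside the physical domain $\Omega$ by the reaction ghost penalty $|v|_{g_c}^2 \leqslant |v|_{g_h}^2$.
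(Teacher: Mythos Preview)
Your proof is correct and matches the approach the paper has in mind: the paper does not spell out a proof but declares the result an ``immediate corollary'' of Lemma~\ref{lem:norm_rsb_beta_diff_const} together with a ghost-penalty assumption, and your argument---apply Lemma~\ref{lem:norm_rsb_beta_diff_const} patch-wise, sum using the finite-overlap hypothesis, then invoke the ghost-penalty extension to pass from $\tau_c^{-1}\|v\|_{\mcT_h}^2$ back to $\tn v\tnuph^2$---is exactly the natural way to fill this in. One small observation: the paper cites Assumption~\ref{ass:ghost-penalty-sdnorm-ext} (the streamline-diffusion extension) for this corollary, whereas you invoke Assumption~\ref{ass:ghost-penalty-l2-ext} (the $L^2$ extension); your choice is in fact the right one, since what is needed is precisely $\tau_c^{-1}\|v\|_{\mcT_h}^2 \lesssim \tau_c^{-1}\|v\|_{\Omega}^2 + |v|_{g_c}^2$, which is the content of Assumption~\ref{ass:ghost-penalty-l2-ext}.
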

\begin{proof}[Proof of Lemma~\ref{lem:norm_rsb_beta_diff_const}]
Thanks to the assumptions~\eqref{eq:bhP-prop}
and the fact that $\phi_{b} \|b\|_{0,\infty,P} \lesssim h$,
we have the following chain of estimates,
\begin{align}
  \phi_{b} \|b_h - b\|_{0,\infty,P}^2 
  &\lesssim
  \phi_{b} 
  \|b\|_{0,\infty,P}
  h
  |b|_{1,\infty,P}
  \lesssim
  h^2
  |b|_{1,\infty,P}.
\end{align}
Thus an application of the Cauchy-Schwarz inequality
and the inequality~\eqref{eq:inverse-est-cut-T} to 
$\|\nabla v\|_{P}$ yields
\begin{align}
  \|
  {\phi}_{b}^{\onehalf}
  (b_h^P - b) \cdot \nabla v\|^2_{P}
  \lesssim
  \phi_{b}
  \|b_h^P - b\|_{0,\infty,P}^2 
  \| \nabla v\|^2_{P}
  \lesssim
  |b|_{1,\infty,\Omega} h^2 \|\nabla v \|_{P}^2
  \lesssim \tau_c^{-1} \|v\|_{P}^2.
\end{align}
\end{proof}
\begin{remark}
  Note that thanks to assumption $b \in W^{1,\infty}(\Omega)$ and the
  existence of an extension operator
  $(\cdot)^e : W^{1, \infty}(\Omega) \to W^{1,\infty}(\RR^d)$,
  cf.~Section~\ref{ssec:useful-inequalities}, a patch-wise defined
  velocity field $b_h^P$ satisfying
  Lemma~\ref{lem:norm_rsb_beta_diff_const}, Eq.~\eqref{eq:bhP-prop}
  can be always constructed, e.g., by simply taking the value of $b$
  at some point in the patch.
\end{remark}

Next, we state and prove the main result of this section, showing that
the cut discontinuous Galerkin method~(\ref{eq:cutDGM}) for the
advection-reaction problem~(\ref{eq:adr-problem}) is inf-sup stable
with respect to the \mbox{$\tn \cdot \tnsdh$} norm. The main challenge
here is to show that this result holds with a stability constant
which is independent of the particular cut configuration.
\begin{theorem}
  Let $A_h$ be the bilinear form defined by~\eqref{eq:cutDGM}.
  Then for $v \in V_h$ it holds that
  \begin{align}
   c_0 \tau_c \tn v \tnsdh \lesssim \sup_{w \in V_h\setminus\{0\}}\dfrac{A_h(v, w)}{\tn w \tnsdh}.
   \label{eq:inf-sup-condition}
  \end{align}
  with the hidden constant independent of the particular cut configuration.
\end{theorem}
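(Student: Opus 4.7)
The plan is to combine the coercivity estimate from Lemma~\ref{lem:discrete-coerc}, which yields $A_h(v,v) \gtrsim c_0 \tau_c \tn v \tnuph^2$ and thus controls everything in $\tn v \tnsdh^2$ except the streamline-derivative piece $\|\phi_b^{\onehalf}\, b\cdot\nabla v\|^2_{\mcT_h\cap\Omega}$, with an auxiliary test function that recovers control over this missing term. As noted in Section~\ref{ssec:novel-contrib}, the classical Brezzi--Marini--S\"uli argument based on boundedness on orthogonal subscales is unavailable here because the unfitted $L^2$ projection loses local orthogonality, so a direct construction using an elementwise-constant reference velocity is necessary.

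First I would pick a piecewise constant field $b_h^0\in\PP_0(\mcT_h)$ satisfying~\eqref{eq:bh-prop} and set the test function $w_2 = \phi_b\, b_h^0\cdot\nabla v$, which lies in $V_h$ because $v\in V_h$ and $b_h^0$ is elementwise constant. Evaluating $A_h(v,w_2)$ via the primal form~\eqref{eq:ah-def}, the dominant contribution is the volume term $(b\cdot\nabla v,\,\phi_b b_h^0\cdot\nabla v)_{\mcT_h\cap\Omega}$; splitting $b = b_h^0 + (b-b_h^0)$ produces the positive leading term $\|\phi_b^{\onehalf}\, b_h^0\cdot\nabla v\|^2_{\mcT_h\cap\Omega}$ together with cross terms controlled by Corollary~\ref{cor:ghost-penalty-sdnorm-ext-2} combined with Young's inequality. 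The reaction term, the inflow-boundary term on $\Gamma^-$, and the upwind jump and average contributions on $\mcF_h\cap\Omega$ are then estimated using Cauchy--Schwarz, the trace inequality~\eqref{eq:trace-inequality-cut}, and the inverse estimates~\eqref{eq:inverse-est-cut-T}--\eqref{eq:inverse-est-cut-Gamma} applied to $w_2$; each of them can be absorbed into $\epsilon\|\phi_b^{\onehalf}\,b_h^0\cdot\nabla v\|^2_{\mcT_h\cap\Omega} + C_\epsilon\,\tn v\tnuph^2$ for any $\epsilon>0$. The stabilization contribution $g_h(v,w_2)$ is handled analogously by Cauchy--Schwarz in $|\cdot|_{g_h}$ combined with $|w_2|_{g_h}\lesssim |v|_{g_h}$, which follows from the design of $g_h$ and inverse estimates. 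The net result is a bound of the form $A_h(v,w_2) \gtrsim \alpha\,\|\phi_b^{\onehalf}\,b\cdot\nabla v\|^2_{\mcT_h\cap\Omega} - C\,\tn v\tnuph^2$ after one more application of Lemma~\ref{lem:norm_rsb_beta_diff_const} to replace $b_h^0$ by $b$.

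Next I would verify the continuity estimate $\tn w_2\tnsdh \lesssim \tn v\tnsdh$. The reaction piece $\tau_c^{-1}\|w_2\|^2_\Omega$ is controlled using $\phi_b = h/b_c$ and the mesh-resolution assumption~\eqref{eq:mesh-resolution}. For the streamline piece $\|\phi_b^{\onehalf}\,b\cdot\nabla w_2\|^2_{\mcT_h\cap\Omega}$, the inverse estimate~\eqref{eq:inverse-est-cut-T} applied to $\nabla w_2$ yields a bound of the form $\phi_b^{-1}\|w_2\|^2_{\mcT_h}$; here Assumption~\ref{ass:ghost-penalty-sdnorm-ext} is essential to pass from $\mcT_h\cap\Omega$ to the full active mesh $\mcT_h$, after which the resulting quantity is absorbed into $\|\phi_b^{\onehalf}\,b\cdot\nabla v\|^2_{\Omega} + |v|^2_{g_b} + \tau_c^{-1}\|v\|^2_{\Omega} + |v|^2_{g_c}$, hence into $\tn v\tnsdh^2$. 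The face and boundary contributions in $\tn w_2\tnsdh$ are treated by combining trace and inverse estimates with Assumptions~\ref{ass:ghost-penalty-l2-ext}--\ref{ass:ghost-penalty-sdnorm-ext}.

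Finally, I would test with $w = v + \xi\, w_2$ for a sufficiently small, cut-independent parameter $\xi>0$ chosen so that $\xi C \leqslant \tfrac{1}{2}$ in the bound from the previous step. Adding the coercivity estimate to $\xi$ times the streamline estimate and rescaling by $c_0\tau_c$ gives
\begin{equation*}
  A_h(v,w) \gtrsim c_0\tau_c\bigl(\tn v\tnuph^2 + \|\phi_b^{\onehalf}\,b\cdot\nabla v\|^2_{\mcT_h\cap\Omega}\bigr) = c_0\tau_c\,\tn v\tnsdh^2,
\end{equation*}
while the triangle inequality and the previous paragraph yield $\tn w\tnsdh \lesssim \tn v\tnsdh$; dividing by $\tn w\tnsdh$ produces~\eqref{eq:inf-sup-condition}. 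The main obstacle in this program is the bound on $\|\phi_b^{\onehalf}\,b\cdot\nabla w_2\|_{\mcT_h\cap\Omega}$, because inverse estimates naturally produce norms over full uncut elements $T$ rather than over $T\cap\Omega$; this is precisely the gap bridged by Assumption~\ref{ass:ghost-penalty-sdnorm-ext}, which is what ultimately guarantees that the stability constant is independent of the particular cut configuration.
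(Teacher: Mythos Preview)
Your approach is essentially the same as the paper's: coercivity from $w_1=v$, streamline control from $w_2=\phi_b\,b_h^0\cdot\nabla v$, and a linear combination to conclude. The paper organizes Step~2 a bit differently by introducing the auxiliary norm $\tn\cdot\tnsdhast$ from~\eqref{eq:norm-sdhast-def} and proving the single stability bound $\tn w_2\tnsdhast\lesssim\tn v\tnsdh$, after which all remainder terms are handled at once via $|II|+\cdots+|VI|\lesssim\tn v\tnuph\,\tn w_2\tnsdhast$; your term-by-term treatment achieves the same end.

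Two small points need correction. First, the claim $|w_2|_{g_h}\lesssim|v|_{g_h}$ is not derivable from the abstract Assumptions~\ref{ass:ghost-penalty-l2-ext}--\ref{ass:ghost-penalty-sdnorm-ext}; what those assumptions give (and what the paper actually proves, cf.\ term~$V$ in the proof of the stability estimate) is only $|w_2|_{g_h}\lesssim\tn v\tnsdh$. This still suffices, since then $|g_h(v,w_2)|\leqslant|v|_{g_h}\,|w_2|_{g_h}\lesssim\tn v\tnuph\,\tn v\tnsdh$, which can be absorbed by Young's inequality exactly as the other remainder terms. Second, in the combination step the parameter $\xi$ must scale with $c_0\tau_c$ (the paper takes $w_3=v+\delta\,c_0\tau_c\,w_2$): the coercivity estimate delivers only $c_0\tau_c\,\tn v\tnuph^2$, so absorbing the negative term $-\xi C\,\tn v\tnuph^2$ requires $\xi C\lesssim c_0\tau_c$, not merely $\xi C\leqslant\tfrac12$.
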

\begin{proof}
  We follow the proof for the fitted DG method, see for instance~\cite{DiPietroErn2012},
  to construct a suitable test function $w \in V_h$
  for given $v \in V_h$ such that \eqref{eq:inf-sup-condition} holds.
  \\
  \noindent {\bf Step 1.}
  Setting $w_1 = v$, we gain control over the $\tn w \tnuph^2$, thanks to
  the coercivity result~(\ref{eq:discrete-coerc}),
  \begin{align}
    A_h(v, w_1) \geqslant c_0 \tau_c \tn v \tnuph^2.
    \label{eq:inf-sup-proof-step-1}
  \end{align}
  \noindent {\bf Step 2.}
  Next, set $w_2 = \phi_b b_h^0 \nabla v$ and note that $w_2 \in V_h$ since $b_h^0 \in \mcP_0(\mcT_h)$. Then
  \begin{align}
    A_h(v, w_2)
    &=
    \| \phi_b^{\onehalf} \bnabla v \|_{\Omega}^2
+ (\bnabla v, \phi_b  (b_h^0 - b)\cdot \nabla v)_{\Omega}
    + (c v, w_2)_{\Omega}
    \nonumber
    - (b\cdot n v, w_2)_{\Gamma^-}
    \\
    &\qquad
    - (b\cdot n \jump{v}, \mean{w_2})_{\mcF_h \cap \Omega}
    + \donehalf (|b\cdot n| \jump{v}, \jump{w_2})_{\mcF_h \cap \Omega}
    + g_h(v, w_2)
    \\
    &= \| \phi_b^{\onehalf} \bnabla v \|_{\Omega}^2
  + I + II + III + IV + V + VI.
  \label{eq:inf-sup-insert-vh2}
  \end{align}
  Term $I$ can be bounded by successively applying
  a Cauchy-Schwarz inequality and~\eqref{eq:norm_rsb_beta_diff_const},
  \begin{align}
    |I| \leqslant 
    \| \phi_b^{\onehalf} \bnabla v \|_{\Omega}
    \|\phi_b^{\onehalf} (b_h^0 - b) \cdot \nabla v\|_{\Omega}
    \lesssim 
    \| \phi_b^{\onehalf} \bnabla v \|_{\Omega}
    \tn v \tnuph.
    \label{eq:inf-sup-est-of-I}
  \end{align}
To estimate the remaining terms $II$--$VI$, let us for the moment assume that the stability estimate
\begin{align}
  \tn w_2 \tnsdhast
   \lesssim  
  \tn v \tnsdh
  \label{eq:uphast-norm-of-vh2}
\end{align}
holds. Then one can easily see that
\begin{align}
  |II| + \cdots + |VI|
  \lesssim
  \tn v \tnuph 
  \tn w_2 \tnsdhast
  \lesssim
  \tn v \tnuph 
  \tn v \tnsdh.
  \label{eq:inf-sup-est-of-II-VI}
\end{align}
Combing~\eqref{eq:inf-sup-est-of-I} and~\eqref{eq:inf-sup-est-of-II-VI},
we can estimate the right-hand side of~\eqref{eq:inf-sup-insert-vh2}
further by employing a Young inequality of the form 
$ab \leqslant \delta a^2 + \tfrac{1}{4 \delta} b^2$  yielding
\begin{align}
  A_h(v, w_2)
  &\geqslant
  \|\phi_b^{\onehalf} \bnabla v \|_{\Omega}^2
  -  \| \phi_b^{\onehalf} \bnabla v \|_{\Omega}  \tn v \tnuph 
  - C \tn v \tnuph \tn v \tnsdh
  \\
  &\geqslant
    \|\phi_b^{\onehalf} \bnabla v \|_{\Omega}^2
    - \delta \| \phi_b^{\onehalf} \bnabla v \|_{\Omega}
    -\dfrac{1}{4\delta} \tn v \tnuph 
  - \delta C \tn v \tnsdh^2
  - \dfrac{C}{4\delta} \tn v \tnuph^2
  \\
  &=
 (1- \delta - \delta C))\|\phi_b^{\onehalf} \bnabla v \|_{\Omega}^2
    - (\dfrac{1}{4\delta} + C\delta + \dfrac{C}{4\delta})  \tn v \tnuph^2
  \\
    &=
 \donehalf \|\phi_b^{\onehalf} \bnabla v \|_{\Omega}^2
    - \dfrac{(1+C)^3 + C }{2(1+C)} \tn v \tnuph^2,
    \label{eq:inf-sup-proof-step-2}
\end{align}
for some constant $C$ and  $\delta = \tfrac{1}{2+2C}$. This gives us the desired control
over the streamline derivative.
\\
{\bf Step 3.}
To construct a suitable test function for given $v$, 
we set now $w_3 = w_1 + \delta c_0 \tau_c w_2$. 
Thanks to stability estimate~\eqref{eq:uphast-norm-of-vh2} we have 
$
\tn w_3\tnsdh \lesssim \tn v \tnsdh
+ \delta c_0 \tau_c \tn v \tnsdh \leqslant (1+\delta)\tn v \tnsdh
$
and thus combining~\eqref{eq:inf-sup-proof-step-1} and~\eqref{eq:inf-sup-proof-step-2}
leads us to
\begin{align}
  A_h(v, w_3)
  &\geqslant
  (1- \delta \widetilde{C})c_0 \tau_c   \tn v \tnuph^2
  + \dfrac{\delta}{2}c_0 \tau_c \| \phi_b^{\onehalf} \bnabla v \|_{\Omega}^2
  \nonumber
  \\
  &\gtrsim c_0\tau_c \tn v \tnsdh^2
  \gtrsim c_0 \tau_c \tn v \tnsdh \tn w_3 \tnsdh
\end{align}
for some constant $\widetilde{C}$ and $\delta > 0$ small enough.
Dividing by $\tn w_3 \tnsdh$ and taking the supremum over $v$
proves~\eqref{eq:inf-sup-condition}.
To complete the proof, it remains to establish the stability bound~\eqref{eq:uphast-norm-of-vh2}.
\\
{\bf Estimate~\eqref{eq:uphast-norm-of-vh2}.}
We start by unwinding the definition of $\tn \cdot \tnsdhast$,
\begin{align}
\tn w_2 \tnsdhast^2
&= \tau_c^{-1} \| w_2 \|_{\Omega}^2
+ \|\phi_b^{\onehalf} \bnabla w_2\|_{\Omega}^2
+  \donehalf \| |b\cdot n|^{\onehalf}  w_2\|_{\Gamma}^2
+ b_c \|w_2\|_{\partial \mcT_h\cap\Omega}^2
+ |w_2|_{g_h}^2
\\
&=
I + II + III + IV + V.
\end{align}
The main tool to estimate $I$--$V$ is inequality~(\ref{eq:ghost-penalty-sdnorm-ext-2})
with $P = T$, $\mcP_h = \mcT_h$ and $b_h^P = b_h^0$.
Inserting the definition of $w_2 = \phi_b \bhnabla v$
and the fact  that $\phi_b \tau_c^{-1} \leqslant 1$
thanks to assumption~(\ref{eq:mesh-resolution}) yields
\begin{align}
  I 
  &= \tau_c^{-1} \| \phi_b \bhnabla v \|_{\Omega}^2
  = \tau_c^{-1} \phi_b \| \phi_b^{\onehalf} \bhnabla v \|_{\Omega}^2
  \leqslant \| \phi_b^{\onehalf} \bhnabla v \|_{\Omega}^2
    \lesssim
   \tn v \tnsdh^2.
\end{align}
   The second term $II$ can be dealt with by recalling
   the definition of $\phi_b$,
   applying the inverse estimate~(\ref{eq:inverse-est-cut-T})
   and subsequently moving to the streamline diffusion norm
   via~(\ref{eq:ghost-penalty-sdnorm-ext-2}),
   \begin{align}
   II &= \| \phi_b^{\onehalf}\bnabla (\phi_b \bhnabla v)  \|_{\Omega}^2
   \lesssim b_c h \| \nabla (\phi_b \bhnabla v)  \|_{\Omega}^2
   \lesssim b_c h^{-1} \| \phi_b \bhnabla v  \|_{\mcT_h}^2
   \\
   &\lesssim \| \phi_b^{\onehalf} \bhnabla v  \|_{\mcT_h}^2
   \lesssim \tn v \tnsdh^2.
\end{align}
    Next, invoking the inverse trace estimates~(\ref{eq:inverse-est-cut-Gamma})
    followed by an application of~(\ref{eq:ghost-penalty-sdnorm-ext-2}),
    we see that
\begin{gather}
  III
    \lesssim b_c\|\phi_b \bhnabla v\|^2_{\Gamma}
    \lesssim
    \|(h\phi_b)^{\onehalf} \bhnabla v\|^2_{\Gamma} 
    \lesssim
    \|\phi_b^{\onehalf} \bhnabla v\|^2_{\mcT_h}
    \lesssim
   \tn v \tnsdh^2,
      \intertext{and similarly for IV,}
      IV
    =   b_c\|\phi_b \bhnabla v\|^2_{\partial \mcT_h \cap \Omega}
    \lesssim  \|\phi_b^{\onehalf} \bhnabla v\|^2_{\mcT_h}
    \lesssim
   \tn v \tnsdh^2.
\end{gather}
After using~\eqref{eq:ghost-penalty-sdnorm-ext},
the remaining last term $V$ 
can be estimated by proceeding as for
 $I$ and $II$:
\begin{align}
  V
  &= |w_2|_{g_c}^2 + |w_2|_{g_b}^2
  \lesssim  \tau_c^{-1} \| w_2 \|_{\mcT_h}^2
  + \| \phi_b^{\onehalf} \bhnabla w_2 \|_{\mcT_h}^2
  \lesssim 
  \tn v \tnsdh^2.
\end{align}
\end{proof}
We conclude this section with a couple of remarks, elucidating the
role of the ghost penalties $g_c$ and $g_b$.
\begin{remark}
  We point out that the most critical part in the derivation of the
  inf-sup condition~(\ref{eq:inf-sup-condition}) is the proof of
  the stability estimate~\eqref{eq:uphast-norm-of-vh2}.  At several
  occasions it involves inverse estimates of the
  form~(\ref{eq:inverse-est-cut-T})--(\ref{eq:inverse-est-cut-Gamma})
  to pass from
  $w_2 = \phi_b \bhnabla v$ to $v$ in the streamline diffusion norm.
  As a result, we need to control the streamline derivative of $v$ on
  the \emph{entire} active mesh $\mcT_h$, which is precisely the role
  of the ghost penalty $g_b$.
\end{remark}
\begin{remark}
  The role of the ghost penalty $g_c$ twofold.
  In the previous proof, the ghost penalty
  $g_c$ is only needed to pass between $b$ and $b_h^0$ by controlling
  various norm expressions involving $b - b_h^0$ and $v$ in terms of
  $\tau_c^{-\onehalf} \|v\|_{\mcT_h}$. Consequently, the ghost penalty
  $g_c$ could have been omitted if $b \in
  \PP_1(\mcT_h)$ since then $\phi_b \bnabla v \in V_h$.
  Nevertheless, we will see that $g_c$ is indeed
  needed to ensure that the condition number of the system matrix is
  robust with respect to the boundary position relative to the
  background mesh, see Section~\ref{ssec::condition-number-est}.
\end{remark}

\begin{remark}
  The previous derivation rises the question whether the ghost penalty $g_b$
  is only needed because of the use of the stronger norm $\tn \cdot \tnsd$
  instead of the more classical $\tn \cdot \tnup$ norm. A closer look at the
  numerical analysis based on the $\tn \cdot \tnup$ norm as presented
  in~\cite{BrezziMariniSueli2004} reveals that $g_b$ cannot be avoided when
  $\PP_k$ elements with $k\geqslant 1$ are used. The reason is that the a
  priori analysis based on $\tn \cdot \tnup$ exploits the orthogonality of
  the $L^2$ projection operator $\pi_h$ to rewrite the advection related term
  in~\eqref{eq:ah-dual} to
  \begin{align}
    (\pi_h u - u, \bnabla v)_{\Omega}
    = (\pi_h u - u, (b - b_h^0) \cdot \nabla v)_{\Omega}
  \end{align}
  to arrive at an optimal error estimate for the advection related
  error terms in the $\tn u - u_h \tnup$ norm. In the unfitted case though, it is not
  possible to define a \emph{stable} orthogonal projection operator using the
  $L^2$ scalar product $(\cdot, \cdot)_{\Omega}$ considering only the
  physical domain $\Omega$.
  The orthogonality of the unfitted $L^2$ projection~$\pi_h^e$ constructed in Section~\ref{ssec:useful-inequalities}
  is perturbed as it
  satisfies only $(\pi_h^e u - u^e, v)_{\mcT_h} = 0$ for $v\in V_h$. Therefore, we get now
  \begin{align}
    (\pi_h^e u - u^e, \bnabla v)_{\Omega}
    = (\pi_h^e u - u^e, (b - b_h) \cdot \nabla v)_{\Omega}
    - (\pi_h u^e - u^e, b_h \cdot \nabla v)_{\mcT_h \setminus \Omega},
    \label{eq:perturbed-orthog-argument}
  \end{align}
  and thus we need to provide sufficient control of
  $ \| \phi_b^{\onehalf} b_h \cdot \nabla v\|_{\mcT_h \setminus \Omega}$
  in the relevant norms to handle the second term in~\eqref{eq:perturbed-orthog-argument} for $k \geqslant 1$.
  For $v \in \PP_0$ on the the other hand,
  \eqref{eq:perturbed-orthog-argument} vanishes, and thus no
  additional advection related CutDG stabilizations are needed.
    \label{rem:perturbed-orthog-argument}
\end{remark}

\section{A priori error analysis}
\label{ssec:aprior-analysis}
We turn to the a priori error analysis of the unfitted discretization
scheme~(\ref{eq:cutDGM}).  To keep the technical details at a moderate
level, we assume for a priori error analysis that the contributions
from the cut elements $\mcT_h \cap \Omega$, the cut faces
$\mcF_h \cap \Omega$ and the boundary parts $\Gamma \cap \mcT_h$ can
be computed exactly. For a thorough treatment of variational crimes
arising from the discretization of a curved boundary,
we refer the reader to
\cite{LiMelenkWohlmuthEtAl2010,BurmanHansboLarsonEtAl2014,GrossOlshanskiiReusken2014}.
We start with quantifying the effect of the stabilization $g_h$ on the
consistency of the total bilinear form $A_h$ defined
by~(\ref{eq:cutDGM}).
\begin{lemma}[Weak Galerkin orthogonality]
  \label{lem:weak-galerkin}
  Let $u \in H^1(\Omega)$ be the
  solution~\footnote{We assume
    $u \in H^1{(\Omega)}$ to simplify the presentation, but weaker
    regularity assumptions can be made, see for
    instance~\cite{DiPietroErn2012}.}
  to~(\ref{eq:adr-problem}) and let $u_h$ be the
  solution to the discrete formulation~(\ref{eq:cutDGM}).  Then
  \begin{align}
    a_h(u - u_h, v) - g_h(u_h, v) = 0
  \end{align}
\end{lemma}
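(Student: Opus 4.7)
The plan is to establish the identity by first proving a Galerkin consistency property for the continuous part $a_h$, and then subtracting the discrete equation~\eqref{eq:cutDGM}. Since $A_h = a_h + g_h$ and $u_h$ satisfies $A_h(u_h, v) = l_h(v)$, it suffices to show the consistency identity $a_h(u, v) = l_h(v)$ for all $v \in V_h$; subtracting the two equations then yields
\begin{equation*}
a_h(u - u_h, v) - g_h(u_h, v) = a_h(u, v) - l_h(v) = 0.
\end{equation*}

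To prove consistency of $a_h$, I would start from the strong formulation $b\cdot\nabla u + cu = f$ in $\Omega$, multiply by an arbitrary $v \in V_h$ and integrate over $\mcT_h \cap \Omega$. Since $u \in H^1(\Omega)$, its trace is single-valued on each interior face, so $\jump{u}|_F = 0$ and $\avg{u}|_F = u|_F$ for $F \in \mcF_h \cap \Omega$. Substituting into the definition~\eqref{eq:ah-def} of $a_h(u, v)$, the jump-based terms $-(b\cdot n\jump{u},\avg{v})_{\mcF_h \cap \Omega}$ and $\donehalf(|b\cdot n|\jump{u},\jump{v})_{\mcF_h \cap \Omega}$ vanish identically. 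What remains is
\begin{equation*}
a_h(u,v) = (b\cdot\nabla u + cu, v)_{\mcT_h \cap \Omega} - (b \cdot n u, v)_{\Gamma^-} = (f,v)_\Omega - (b\cdot n u, v)_{\Gamma^-}.
\end{equation*}
The boundary condition $u = g$ on $\Gamma^-$ then turns the last term into $-(b\cdot n g, v)_{\Gamma^-}$, matching exactly the definition~\eqref{eq:lh-def} of $l_h(v)$.

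The only mild obstacle is a regularity check: the bulk identity $(b\cdot\nabla u + cu, v)_{\mcT_h \cap \Omega} = (f,v)_\Omega$ requires that $b\cdot\nabla u \in L^2(\Omega)$ so that the pointwise equation $b\cdot\nabla u + cu = f$ can be tested against $v \in V_h$, and the use of traces on $\Gamma^-$ and on interior faces requires enough regularity to make sense of $u|_{\Gamma^-}$ and $u|_F$. Under the assumption $u \in H^1(\Omega)$ stated in the lemma this is immediate from the standard trace theorem, and otherwise the argument carries over using graph-space traces as in~\cite{DiPietroErn2012}. No integration-by-parts manipulations on cut elements are needed since we work directly from the element-wise primal formulation~\eqref{eq:ah-def}, which is precisely why the cut geometry does not introduce any new difficulty at this step.

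Finally, subtracting the consistency identity $a_h(u, v) = l_h(v)$ from the discrete equation $a_h(u_h, v) + g_h(u_h, v) = l_h(v)$ yields the claimed weak Galerkin orthogonality. The asymmetry of the relation (only $u_h$ appears in the $g_h$ term) reflects the fact that $g_h$ is a non-consistent stabilization defined solely on $V_h$, which motivates the terminology \emph{weak} Galerkin orthogonality.
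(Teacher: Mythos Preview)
Your proof is correct and follows exactly the same approach as the paper, which simply records that $u$ satisfies $a_h(u,v) = l_h(v)$ for all $v \in V_h$ and leaves the subtraction implicit. You have merely filled in the details of why this consistency identity holds (vanishing jumps for $u \in H^1(\Omega)$ and use of the boundary condition on $\Gamma^-$), which the paper omits.
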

\begin{proof}
  The proof is a direct consequence of the fact that $u$ satisfies $a_h(u,v) = l_h(v)\; \foralls v \in V_h$.
\end{proof}
\begin{theorem}[A priori error estimate for $p \geqslant 1$]
  \label{thm:apriori-est}
  For  $s \geqslant 2$, let $u \in H^{s}(\Omega)$ be the solution to
  the advection-reaction problem~(\ref{eq:adr-problem}) and let $u_h$ be
  the solution to the stabilized CutDG formulation~(\ref{eq:cutDGM}).
  Then with $r = \min\{s, p+1\}$ with $p \geqslant 1$ it holds that
  \begin{align}
  \label{eq:apriori-est}
    \tn u - u_h \tnsd &\lesssim (c_0 \tau_c)^{-1} b_c^{\onehalf} h^{r-\onehalf} \| u \|_{r,\Omega}.
  \end{align}
\end{theorem}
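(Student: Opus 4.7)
The plan is to follow the classical split $u - u_h = \eta + \xi_h$ with $\eta = u - \pi_h^e u$ and $\xi_h = \pi_h^e u - u_h \in V_h$, and then bound each summand in $\tn \cdot \tnsd$ via the triangle inequality. For the interpolation part I would observe that $\tn \cdot \tnsd \lesssim \tn \cdot \tnsdhast$: the only contribution in $\tn \eta \tnsd$ not already present in $\tn \eta \tnsdhast$ is the interior jump term $\tfrac{1}{2}\||b\cdot n|^{\onehalf}\jump{\eta}\|^2_{\mcF_h \cap \Omega}$, which is dominated by $b_c\|\eta\|^2_{\partial \mcT_h \cap \Omega}$ by a local face-sum. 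Combined with the projection estimate~\eqref{eq:projection-error} and the mesh resolution condition~\eqref{eq:mesh-resolution} (which yields $\tau_c^{-\onehalf}h^{\onehalf}\lesssim b_c^{\onehalf}$), this directly gives $\tn \eta \tnsd \lesssim b_c^{\onehalf} h^{r-\onehalf}\|u\|_{r,\Omega}$.

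For the discrete part I would use $\tn \xi_h \tnsd \leqslant \tn \xi_h \tnsdh$ and apply the inf-sup condition~\eqref{eq:inf-sup-condition}, reducing the task to bounding $A_h(\xi_h, w)/\tn w \tnsdh$ uniformly in $w \in V_h$. The weak Galerkin orthogonality (Lemma~\ref{lem:weak-galerkin}) together with the definition of $A_h$ rewrites this numerator as
\[
A_h(\xi_h, w) = -a_h(\eta, w) + g_h(\pi_h^e u, w).
\]
Cauchy--Schwarz together with Assumption~\ref{ass:weak-consistency} and~\eqref{eq:mesh-resolution} gives $|g_h(\pi_h^e u, w)| \lesssim b_c^{\onehalf} h^{r-\onehalf}\|u\|_{r,\Omega}\,\tn w \tnsdh$. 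For $a_h(\eta, w)$ I would use the integrated-by-parts representation~\eqref{eq:ah-dual}. The reactive term, the outflow-boundary term $(b\cdot n\,\eta, w)_{\Gamma^+}$, and the face contributions involving $\avg{\eta}\jump{w}$ and $|b\cdot n|\jump{\eta}\jump{w}$ are routine: Cauchy--Schwarz paired with the projection bounds~\eqref{eq:interpol-est-cut-T}--\eqref{eq:interpol-est-cut-Gamma} each produce a contribution of the required order $b_c^{\onehalf} h^{r-\onehalf}\|u\|_{r,\Omega}\tn w \tnsdh$.

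The main obstacle is the advective volume term $(\eta, b\cdot\nabla w)_{\mcT_h \cap \Omega}$, where a brute-force Cauchy--Schwarz would lose half a power of $h$. Following Remark~\ref{rem:perturbed-orthog-argument}, I would split $b = (b - b_h^0) + b_h^0$ with $b_h^0 \in \PP_0(\mcT_h)$ and exploit the fact that $b_h^0 \cdot \nabla w \in V_h$, so that the perturbed orthogonality of $\pi_h^e$ yields $(\eta, b_h^0 \cdot \nabla w)_{\mcT_h} = 0$. Hence
\[
(\eta, b\cdot\nabla w)_{\mcT_h\cap\Omega}
= (\eta, (b-b_h^0)\cdot\nabla w)_{\mcT_h\cap\Omega}
- (\eta, b_h^0 \cdot \nabla w)_{\mcT_h\setminus\Omega}.
\]
The first piece is controlled by pairing $\|\phi_b^{-\onehalf}\eta\|_\Omega \lesssim b_c^{\onehalf} h^{r-\onehalf}\|u\|_{r,\Omega}$ with Corollary~\ref{cor:ghost-penalty-sdnorm-ext-2} applied to $\mcP_h = \mcT_h$ and $b_h^P = b_h^0$, which absorbs $\|\phi_b^{\onehalf}(b-b_h^0)\cdot\nabla w\|_\Omega$ into $\tn w \tnuph \leqslant \tn w \tnsdh$. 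The second piece uses the global approximation estimate on the active mesh, $\|\eta\|_{\mcT_h} \lesssim h^r \|u\|_{r,\Omega}$, combined with Assumption~\ref{ass:ghost-penalty-sdnorm-ext} and Lemma~\ref{lem:norm_rsb_beta_diff_const} (to pass between $b_h^0$ and $b$) so that $\|\phi_b^{\onehalf} b_h^0 \cdot \nabla w\|_{\mcT_h\setminus\Omega} \lesssim \tn w \tnsdh$. Collecting all contributions and invoking~\eqref{eq:inf-sup-condition} yields $\tn \xi_h \tnsdh \lesssim (c_0\tau_c)^{-1} b_c^{\onehalf} h^{r-\onehalf}\|u\|_{r,\Omega}$; combining with the interpolation bound finishes the proof.
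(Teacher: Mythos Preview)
Your proof is correct and follows the same overall strategy as the paper: split into projection and discrete error, invoke the inf-sup condition~\eqref{eq:inf-sup-condition}, and use weak Galerkin orthogonality to reduce to bounding $a_h(e_\pi,v)+g_h(\pi_h^e u,v)$. The paper compresses the treatment of $a_h(e_\pi,v)$ into the single continuity-type bound $a_h(e_\pi,v)\lesssim \tn e_\pi\tnsdhast\tn v\tnsd$, whereas you make explicit the perturbed-orthogonality argument from Remark~\ref{rem:perturbed-orthog-argument} (splitting $b=(b-b_h^0)+b_h^0$ and using $(\eta,b_h^0\!\cdot\!\nabla w)_{\mcT_h}=0$) to handle the advective volume term $(\eta,b\!\cdot\!\nabla w)_{\mcT_h\cap\Omega}$; this is exactly the mechanism the paper relies on implicitly, so the two proofs coincide once the details are filled in.
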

\begin{proof}
  Decompose $u - u_h$
  into a projection error $e_{\pi} = u - \pi_h u$ and
  a discrete error $e_h = \pi_h u - u_h$. Thanks to the interpolation estimate~(\ref{eq:projection-error}),
  it is enough to estimate the discrete error $e_h$ for which
  the inf-sup condition~(\ref{eq:inf-sup-condition})
  implies that
  \begin{align}
   c_0 \tau_c \tn e_h \tnsdh
    &\lesssim
      \sup_{v \in V_h\setminus{\{0\}}}
      \dfrac{A_h(e_h, v)}{\tn v \tnsdh}
      \\
    &=
      \sup_{v \in V_h\setminus{\{0\}}}
      \dfrac{a_h(e_{\pi}, v) + g_h(\pi_h u, v)}{\tn v \tnsdh}
      \label{eq:apriori-est-proof-step-2}
      \\
    &\lesssim
      \sup_{v \in V_h\setminus{\{0\}}}
      \dfrac{ \tn e_{\pi} \tnsdast \tn v \tnsd +  |\pi_h u|_{g_h}|v|_{g_h}}{\tn v \tnsdh}
      \\
    &\lesssim \tn e_{\pi} \tnsdast + |\pi_h u |_{g_h}.
  \end{align}
  Recalling assumption~\eqref{eq:mesh-resolution} and its
  reformulation~\eqref{eq:advect-dominant-and-resolved}, the bounds for
  the consistency and approximation error stated
  in Assumption~\ref{ass:weak-consistency} and
  Eq.\eqref{eq:projection-error} show that
\begin{align}
  \tn u - u_h \tnsd 
  &\lesssim 
  \tn e_{\pi} \tnsd
  + 
  \tn e_{h} \tnsdh
  \lesssim (c_0 \tau_c)^{-1} (\tn e_{\pi} \tnsdast + |\pi_h u |_{g_h})
  \\
  &\lesssim (c_0 \tau_c)^{-1} (\tau_c^{\onehalf}h^{\onehalf} + b_c^{\onehalf})
  h^{r-\onehalf} \| u \|_{r, \Omega}
  \lesssim (c_0 \tau_c)^{-1} b_c^{\onehalf}
  h^{r-\onehalf} \| u \|_{r, \Omega},
\end{align}
which concludes the proof
\end{proof}
We conclude this section by considering the particular case of
$\PP_0$ elements. First observe that since $\bnabla v = 0$ for $v
\in \PP_0(\mcT_h)$, all terms in \eqref{eq:perturbed-orthog-argument}
vanish. 
Referring to Remark~\ref{rem:perturbed-orthog-argument},
we therefore do not need the $\tn \cdot \tnsdh$ norm
to control the remainder term in 
\eqref{eq:perturbed-orthog-argument}.
Instead, we can work in the weaker and unstabilized $\tn \cdot
\tnup$ norm to derive an optimal a priori error
estimate.
\begin{proposition}[A priori error estimate for $p = 0$]
  \label{prop:apriori-est-p0}
  For  $s \geqslant 1$, let $u \in H^{s}(\Omega)$ be the solution to
  the advection-reaction problem~(\ref{eq:adr-problem}) and let $u_h$ be
  the solution to the \emph{unstabilized} CutDG formulation~(\ref{eq:cutDGM})
  with $g_h = 0$. Then,
  \begin{align}
  \label{eq:apriori-est-0}
    \tn u - u_h \tnup &\lesssim (c_0 \tau_c)^{-1} b_c^{\onehalf} h^{\onehalf} \| u \|_{1,\Omega}.
  \end{align}
\end{proposition}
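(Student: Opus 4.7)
The plan is to mirror the proof of Theorem~\ref{thm:apriori-est} but to work entirely in the weaker upwind norm $\tn \cdot \tnup$ and exploit the structural simplifications that arise when $v \in \PP_0(\mcT_h)$. As before, I split the error $u - u_h = e_{\pi} + e_h$ into the unfitted projection error $e_{\pi} = u - \pi_h^e u$ and the discrete error $e_h = \pi_h^e u - u_h \in V_h$. Since $g_h \equiv 0$, Lemma~\ref{lem:discrete-coerc} gives $c_0 \tau_c \tn e_h \tnup^2 \lesssim A_h(e_h, e_h) = a_h(e_h, e_h)$, and Lemma~\ref{lem:weak-galerkin} (with $g_h \equiv 0$) reduces the task to estimating $|a_h(e_{\pi}, e_h)|$.

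To estimate $|a_h(e_{\pi}, v)|$ for $v \in \PP_0(\mcT_h)$, I would switch to the partially integrated (dual) form~\eqref{eq:ah-dual}. The decisive simplification is that $\bnabla v \equiv 0$, so the volume term $(e_{\pi}, b \cdot \nabla v)_{\mcT_h \cap \Omega}$ vanishes identically; this is precisely the mechanism highlighted in Remark~\ref{rem:perturbed-orthog-argument}. Each of the remaining contributions, on $\mcT_h \cap \Omega$, on $\Gamma^+$, and on $\mcF_h \cap \Omega$, is of the form ``lower-order factor times $e_{\pi}$ paired with $v$'' and can be bounded by Cauchy--Schwarz in an appropriate weighted norm whose dual factor is controlled by $\tn v \tnup$. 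For the facet terms I would use the face-wise analogues and treat $\avg{e_{\pi}}$ by the triangle inequality from its one-sided traces on adjacent elements.

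The resulting factors on $e_{\pi}$ are bounded with the approximation estimates~\eqref{eq:interpol-est-cut-T}--\eqref{eq:interpol-est-cut-Gamma} applied with $p = 0$ and $r = 1$, yielding $\|e_{\pi}\|_{\mcT_h} \lesssim h \|u\|_{1,\Omega}$ together with $\|e_{\pi}\|_{\mcF_h} + \|e_{\pi}\|_{\Gamma} \lesssim h^{\onehalf} \|u\|_{1,\Omega}$. Using the mesh-resolution hypothesis~\eqref{eq:mesh-resolution}, the volume contribution satisfies $\tau_c^{-\onehalf} h \lesssim b_c^{\onehalf} h^{\onehalf}$, while the face and boundary contributions are directly of order $b_c^{\onehalf} h^{\onehalf} \|u\|_{1,\Omega}$. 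Collecting everything, $|a_h(e_{\pi}, e_h)| \lesssim b_c^{\onehalf} h^{\onehalf} \|u\|_{1,\Omega} \tn e_h \tnup$, so that $\tn e_h \tnup \lesssim (c_0 \tau_c)^{-1} b_c^{\onehalf} h^{\onehalf} \|u\|_{1,\Omega}$.

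A triangle inequality in $\tn \cdot \tnup$ combined with the direct interpolation bound $\tn e_{\pi} \tnup \lesssim b_c^{\onehalf} h^{\onehalf} \|u\|_{1,\Omega}$, which requires only the $L^2$ volume, face and boundary estimates for $\pi_h^e$ (note that no streamline derivative of $e_{\pi}$ appears in $\tn \cdot \tnup$), then delivers~\eqref{eq:apriori-est-0}. The main obstacle I anticipate is the careful bookkeeping of the weighted facet terms and in particular verifying that the geometrically robust constants survive without a ghost penalty; this should follow from the fact that all inequalities used on $e_h$ ($L^2$-coercivity, face controls via $\tn \cdot \tnup$) are genuinely $\Omega$-based and never require passing to $\mcT_h \setminus \Omega$, which is precisely the scenario that forced the introduction of $g_b$ in the $p \geqslant 1$ analysis.
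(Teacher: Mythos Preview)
Your proposal is correct and follows essentially the same approach as the paper: both arguments split the error as $e_{\pi}+e_h$, invoke coercivity and Galerkin orthogonality with $g_h=0$, switch to the dual representation~\eqref{eq:ah-dual} to exploit $\bnabla e_h=0$ for $\PP_0$, and then bound the surviving reaction, boundary and facet terms by Cauchy--Schwarz against interpolation estimates. The only cosmetic difference is that the paper packages the $e_{\pi}$-factors into an auxiliary norm $\tn v\tnupast^2=\tn v\tnup^2+b_c\|v\|_{\partial\mcT_h\cap\Omega}^2$ and then appeals directly to $\tn e_{\pi}\tnupast\leqslant\tn e_{\pi}\tnsdhast$ and~\eqref{eq:projection-error}, whereas you estimate each contribution separately.
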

\begin{proof}
  The proof follows verbatimly the standard arguments, see, e.g.,
  \cite{BrezziMariniSueli2004,DiPietroErn2012}. Starting from the
  discrete error $e_h = \pi_h u - u_h$, Remark~\ref{rem:po0-stab}
  together with the representation~\eqref{eq:ah-dual} and the fact
  that $\bnabla e_h = 0$ imply that
  \begin{align}
    c_0 \tau_c \tn e_h \tnup^2 
    &\lesssim a_h(e_h, e_h)
    \\
    &= a_h(e_{\pi}, e_h)
    \\
  &= ((c - \nabla \cdot b) e_{\pi},e_h)_{\mcT_h \cap \Omega} 
  + (b\cdot n e_{\pi}, e_h)_{\Gamma^+}
  \nonumber
  \\
  &\quad
  + (b\cdot n\avg{e_{\pi}},\jump{e_h})_{\mcF_h \cap \Omega}
  + \dfrac{1}{2} (|b\cdot n| \jump{e_{\pi}},\jump{e_h})_{\mcF_h \cap \Omega}.
  \\
  &\lesssim
  \tn e_{\pi} \tnupast \tn e_h \tnup,
  \end{align}
  where we defined the auxiliary norm 
  $\tn v \tnupast$
  by
  $\tn v \tnupast^2 = \tn v \tnup^2 + b_c \|v\|_{\partial \mcT_h \cap \Omega}^2$.
  Now the result follows from \eqref{eq:projection-error} since $\tn
  e_{\pi} \tnupast \leqslant \tn e_{\pi} \tnsdhast$.
\end{proof}
\begin{remark}
  The previous  proposition
  shows that geometrically robust optimal a priori error estimates
  can be obtained for unfitted $\PP_0$ based discretization
  of the \emph{stationary} advection-reaction problem,
  \emph{even without employing any ghost penalties}.
  Nevertheless, without adding $g_c$,
  the resulting system matrix will be severely ill-conditioned in the presence
  of small cut elements, see Section~\ref{ssec::condition-number-est}.
  Similar, an explicit time-stepping method for
  the time-dependent advection-reaction problem
  will suffer from severe time-step restrictions
  if the mass matrix is not sufficiently stabilized,
  which motivates the introduction of the
  stabilized $L^2$ scalar product in \eqref{eq:stabilized_mass_form}.
  \label{rem:p0-analysis}
\end{remark}

\section{Condition number estimates}
\label{ssec::condition-number-est}
We now conclude the numerical analysis of the CutDG method for the advection-reaction problem
by showing that the condition number associated with bilinear form (\ref{eq:cutDGM})
can be bounded by $C h^{-1}$ with a constant independent of particular cut configuration.
Our derivation is inspired by the presentation in~\cite{ErnGuermond2006}.

Let $\{\phi_i\}_{i=1}^N$ be the standard piecewise polynomial basis
functions associated with $V_h = \PP_p(\mcT_h)$ so that any $v \in V_h$
can be written as 
$v = \sum_{i=1}^N V_i \phi_i$ with coefficients $V = \{V_i\}_{i=1}^N \in \RR^N$.
The system matrix $\mcA$ associated with $A_h$ is defined by the relation
\begin{align}
  ( \mcA V, W )_{\RR^N}  = A_h(v, w) \quad \foralls v, w \in
  V_h.
  \label{eq:stiffness-matrix}
\end{align}
Thanks to the $L^2$ coercivity of $A_h$,
the system matrix $\mcA$ is a bijective linear mapping 
$\mcA:\RR^N \to \RR^N$ with its operator norm and condition number defined by
\begin{align}
  \| \mcA \|_{\RR^N}
  = \sup_{V \in \RR^N\setminus\{0\}}
  \dfrac{\| \mcA V \|_{\RR^N}}{\|V\|_{\RR^N}}
\quad \text{and}
\quad
  \kappa(\mcA) = \| \mcA \|_{\RR^N} \| \mcA^{-1} \|_{\RR^N},
  \label{eq:operator-norm-and-condition-number-def}
\end{align}
respectively.
To pass between the discrete $l^2$ norm of 
coefficient vectors $V$ and the continuous $L^2$ norm of
finite element functions $v_h$, we need to recall
the well-known estimate
\begin{align}
  h^{d/2} \| V \|_{\RR^N}
  \lesssim
   \| v \|_{L^2(\mcT_h)}
  \lesssim
   h^{d/2} \| V \|_{\RR^N},
  \label{eq:mass-matrix-scaling}
\end{align}
which holds for any quasi-uniform mesh $\mcT_h$ and $v\in V_h$.
Then we can prove the following theorem.
\begin{theorem}
  The condition number of the system matrix $\mcA$ associated with~(\ref{eq:cutDGM})
  satisfies
  \begin{align}
   \kappa(\mcA)  \lesssim b_c (c_0 h)^{-1}
  \end{align}
  independent of how the boundary $\Gamma$ cuts the background mesh~$\mcT_h$.
\end{theorem}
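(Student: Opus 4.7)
The plan is the classical two-sided estimate for the condition number: bound $\|\mcA\|_{\RR^N}$ from above using continuity of $A_h$ in a discrete $L^2$-type norm, and bound $\|\mcA^{-1}\|_{\RR^N}$ from above using the inf-sup condition~\eqref{eq:inf-sup-condition} together with two-sided comparisons between the $\tn \cdot \tnsdh$ norm and the broken $L^2(\mcT_h)$ norm on $V_h$. The passage between the continuous norm and the Euclidean norm of the coefficient vector is then supplied by the standard quasi-uniform mass estimate~\eqref{eq:mass-matrix-scaling}.

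First I would establish the three ingredients that link $A_h$ and $\tn \cdot \tnsdh$ with the broken $L^2$ norm $\|\cdot\|_{\mcT_h}$. Using the inverse estimates~\eqref{eq:inverse-est-cut-T}--\eqref{eq:inverse-est-cut-Gamma}, the mesh resolution assumption~\eqref{eq:mesh-resolution} (which implies $\tau_c^{-1} \lesssim b_c h^{-1}$), and the upper bounds $|v|_{g_c}^2 \lesssim \tau_c^{-1}\|v\|_{\mcT_h}^2$ and $|v|_{g_b}^2 \lesssim \|\phi_b^{\onehalf}\bnabla v\|_{\mcT_h}^2 + \tau_c^{-1}\|v\|_{\mcT_h}^2$ extracted from Assumptions~\ref{ass:ghost-penalty-l2-ext} and~\ref{ass:ghost-penalty-sdnorm-ext}, one obtains the continuity estimate
\begin{equation*}
|A_h(v,w)| \lesssim b_c h^{-1} \|v\|_{\mcT_h} \|w\|_{\mcT_h}, \qquad v,w \in V_h,
\end{equation*}
and the upper norm bound $\tn w \tnsdh^2 \lesssim b_c h^{-1} \|w\|_{\mcT_h}^2$ by the same term-by-term inverse estimate argument. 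For the lower bound, dropping non-negative terms in $\tn v \tnsdh^2$ and invoking the left inequality in~\eqref{eq:ghost-penalty-tauc_l2norm-ext} yields
\begin{equation*}
\tau_c^{-1}\|v\|_{\mcT_h}^2 \lesssim \tau_c^{-1}\|v\|_{\Omega}^2 + |v|_{g_c}^2 \leqslant \tn v \tnsdh^2.
\end{equation*}

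Combining the continuity estimate with the mass scaling~\eqref{eq:mass-matrix-scaling} gives, for any coefficient vectors $V,W$,
\begin{equation*}
(\mcA V, W)_{\RR^N} = A_h(v,w) \lesssim b_c h^{-1} \cdot h^{d} \|V\|_{\RR^N} \|W\|_{\RR^N},
\end{equation*}
hence $\|\mcA\|_{\RR^N} \lesssim b_c h^{d-1}$. For the inverse, the inf-sup condition~\eqref{eq:inf-sup-condition} together with the lower $\tn \cdot \tnsdh$ bound and~\eqref{eq:mass-matrix-scaling} give
\begin{equation*}
c_0 \tau_c^{\onehalf} h^{d/2} \|V\|_{\RR^N} \lesssim c_0 \tau_c \tn v \tnsdh \lesssim \sup_{W \neq 0} \frac{(\mcA V, W)_{\RR^N}}{\tn w \tnsdh} \leqslant \sup_{W\neq 0}\frac{\|\mcA V\|_{\RR^N}\|W\|_{\RR^N}}{\tau_c^{-\onehalf} h^{d/2}\|W\|_{\RR^N}},
\end{equation*}
so that $c_0 h^{d}\|V\|_{\RR^N} \lesssim \|\mcA V\|_{\RR^N}$ and therefore $\|\mcA^{-1}\|_{\RR^N} \lesssim (c_0 h^d)^{-1}$. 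Multiplying both bounds produces $\kappa(\mcA) \lesssim b_c h^{d-1} \cdot (c_0 h^d)^{-1} = b_c(c_0 h)^{-1}$.

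The main obstacle is geometrical robustness: all the intermediate bounds must be independent of how $\Gamma$ cuts $\mcT_h$. This is precisely the role of the ghost penalties. Indeed, passing from $\|v\|_\Omega$ to $\|v\|_{\mcT_h}$ in the lower bound would be lost on small cut elements without $g_c$, and the argument for $\|\mcA^{-1}\|$ would degenerate; and the extra control required by the two-sided comparisons in Assumptions~\ref{ass:ghost-penalty-l2-ext}--\ref{ass:ghost-penalty-sdnorm-ext} is exactly what absorbs the ill-behaved cut contributions with constants depending only on the shape regularity of $\mcT_h$ and not on the cut configuration.
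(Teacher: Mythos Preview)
Your argument is correct and reaches the same bound $\kappa(\mcA)\lesssim b_c(c_0h)^{-1}$ with cut-independent constants. The estimate of $\|\mcA\|_{\RR^N}$ is essentially identical to the paper's.

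The only genuine difference lies in the bound on $\|\mcA^{-1}\|_{\RR^N}$. You invoke the inf-sup condition~\eqref{eq:inf-sup-condition} in the $\tn\cdot\tnsdh$ norm and then sandwich that norm between multiples of $\|\cdot\|_{\mcT_h}$ on both the test and trial side. The paper instead uses the weaker and more direct discrete coercivity $A_h(v,v)\gtrsim c_0\tau_c\tn v\tnuph^2$ from Lemma~\ref{lem:discrete-coerc}: combining this with $\tn v\tnuph^2\geqslant \tau_c^{-1}\|v\|_{\Omega}^2+|v|_{g_c}^2\gtrsim \tau_c^{-1}\|v\|_{\mcT_h}^2$ (Assumption~\ref{ass:ghost-penalty-l2-ext}) yields $A_h(v,v)\gtrsim c_0\|v\|_{\mcT_h}^2$ in one line, and then $\|\mcA V\|_{\RR^N}\geqslant (\mcA V,V)_{\RR^N}/\|V\|_{\RR^N}\gtrsim c_0 h^d\|V\|_{\RR^N}$ follows immediately. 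Your route works but is heavier than necessary: the inf-sup result itself depends on the ghost penalties through Assumptions~\ref{ass:ghost-penalty-l2-ext}--\ref{ass:ghost-penalty-sdnorm-ext}, whereas the coercivity lemma needs only the symmetric/skew decomposition of $a_h$ and the positivity of $g_h$. In particular, the paper's argument makes clear that only $g_c$ (via~\eqref{eq:ghost-penalty-tauc_l2norm-ext}) is essential for the condition-number bound, while your argument implicitly also leans on~$g_b$ through the $\tn\cdot\tnsdh$ comparison.
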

\begin{proof}
  The definition of the condition number requires to bound
  $\|\mcA\|_{\RR^N}$ and $\|\mcA^{-1}\|_{\RR^N}$.
  \\
  {\bf Estimate of $\|\mcA\|_{\RR^N}$.}  We start by estimating
  $A_h(v,w)= a_h(v,w) + g_h(v,w) \; \foralls v, w \in V_h$. To bound
  the first term $a_h$, successively employ a Cauchy-Schwarz
  inequality and the inverse trace
  inequalities~(\ref{eq:inverse-est-cut-F})
  and~(\ref{eq:inverse-est-cut-Gamma}) to obtain
  \begin{align}
    a_h(v,w)
    &\lesssim
    \|c\|_{0,\infty,\Omega} \|v\|_{\Omega} \|w\|_{\Omega}
    + \| \bnabla v \|_{\Omega} \|w\|_{\Omega}
    + \| |b\cdot n|^{\onehalf} v\|_{\Gamma}
      \| |b\cdot n|^{\onehalf} w\|_{\Gamma}
      \\
    &\qquad+ \| |b\cdot n|^{\onehalf} v\|_{\mcF_h \cap \Omega}
      \| |b\cdot n|^{\onehalf} w\|_{\mcF_h \cap \Omega}
    \\
    &\lesssim
    \|c\|_{0,\infty,\Omega} \|v\|_{\Omega} \|w\|_{\Omega}
      + h^{-1} b_c \|v\|_{\mcT_h} \|w\|_{\mcT_h}
    \lesssim
      h^{-1} b_c \|v\|_{\mcT_h} \|w\|_{\mcT_h},
  \end{align}
  where we used~(\ref{eq:advect-dominant-and-resolved}) in the last step.
  Next, note that by assumption~(\ref{eq:ghost-penalty-sdnorm-ext}), we have that
  \begin{align}
    |v|_{g_h}^2 \lesssim \|\bnabla v \|_{\mcT_h}^2 + \tau_c^{-1} \|v\|_{\mcT_h}^2
    \lesssim b_c h^{-1} \| v \|_{\mcT_h}^2,
  \end{align}
  and thus
  \begin{align}
      g_h(v,w)
    \lesssim  |v|_{g_h} |w|_{g_h}
    \lesssim b_c h^{-1} \| v \|_{\mcT_h} \|w\|_{\mcT_h}.
  \end{align}
  As a result of these estimates and~(\ref{eq:mass-matrix-scaling}), we have
\begin{align}
  A_h(v, w)
    \lesssim  b_c h^{-1}  \|v\|_{\mcT_h} \|w\|_{\mcT_h}
    \lesssim b_c h^{d-1}  \|V\|_{\RR^N} \|W\|_{\RR^N},
\end{align}
which allows us to estimate $\| \mcA \|_{\RR^N}$ by
\begin{align}
  \| \mcA \|_{\RR^N}
  &=
    \sup_{V \in \RR^N\setminus{\{0\}}}
    \sup_{W \in \RR^N\setminus{\{0\}}}
    \dfrac{(\mcA V, W)_{\RR^N}}{\|V\|_{\RR^N}\|W\|_{\RR^N}}
  \\
  &= \sup_{V \in \RR^N\setminus{\{0\}}}
    \sup_{W \in \RR^N\setminus{\{0\}}}
    \dfrac{A_h(v,w)}{\|V\|_{\RR^N}\|W\|_{\RR^N}}
    \lesssim
      b_c h^{d-1}.
      \label{eq:norm-A-matrix-est}
\end{align}
\\
{\bf Estimate of $\|\mcA^{-1}\|_{\RR^N}$.}
The discrete coercivity result~(\ref{eq:discrete-coerc})
and assumption~(\ref{eq:ghost-penalty-tauc_l2norm-ext}) implies that
\begin{align}
\label{eq:est-Ainv}
  A_h(v, v) \gtrsim
  c_0 \tau_c \tn v \tnuph
  \gtrsim c_0 \| v \|_{\mcT_h}^2
  \gtrsim c_0 h^{d} \| V \|_{\RR^N}^2,
\end{align}
and as a result,
  \begin{align}
  \|\mcA V  \|_{\RR^N}
  &= \sup_{W \in \RR^N\setminus{\{0\}}}
    \dfrac{(\mcA V, W)_{\RR^N}}{\|W\|_{\RR^N}}
    \geqslant
    \dfrac{(AV, V)_{\RR^N}}{\|V\|_{\RR^N}}
    =
    \dfrac{A_h(v,v)}{\|V\|_{\RR^N}}
    \gtrsim 
    c_0 h^d \|V\|_{\RR^N}.
  \end{align}
  Setting $V = \mcA^{-1} W$, the previous chain of estimates shows that
  $\| \mcA^{-1}\|_{\RR^n} \lesssim c_0^{-1} h^{-d}$ which in combination with~\eqref{eq:norm-A-matrix-est}
  gives the desired bound
  \begin{align}
    \| \mcA \|_{\RR^N} \| \mcA^{-1} \|_{\RR^N} \lesssim b_c (c_0h)^{-1}.
  \end{align}
\end{proof}

\section{Ghost penalty realizations}
\label{ssec:ghost-penalty-real}
In this section we present a number of possible realizations of the
ghost penalty operators satisfying our
Assumptions~\ref{ass:weak-consistency},
\ref{ass:ghost-penalty-l2-ext},
and~\ref{ass:ghost-penalty-sdnorm-ext}.  We briefly discuss the $L^2$
norm related ghost penalty $g_c$ first and then derive corresponding
realizations of $g_b$.  So far, three construction principles exist in
the literature.
The first one is the classical~\emph{face-based}
ghost penalty proposed by~\cite{BeckerBurmanHansbo2009} for continuous
$\PP_1$ elements.  High-order variants
were then introduced in~\cite{Burman2010}, for a detailed analysis we
refer to~\cite{Burman2010,MassingLarsonLoggEtAl2013a}.
In face-based ghost penalties, jumps of
normal-derivatives of all relevant polynomial orders across faces belonging to the face
set
\begin{align}
  \mcF_h^g
  = \{ F \in \mcF_h:  T^+ \cap \Gamma \neq \emptyset \lor T^- \cap \Gamma \neq \emptyset \}
  \label{eq:faces-gp-bvp}
\end{align}
are penalized. For our current problem class, the corresponding dG variant
extending the $L^2$ norm is given by
\begin{align}
  g_c^f(v,w) &= \gamma_c^f c_0 \sum_{j=0}^p h^{2j+1} (\jump{\partial_n^j v},  \jump{\partial_n^j w})_{\mcF_h^g},
  \label{eq:gc_face_based}
\end{align}
where the notation $\partialbar_n^j v \coloneqq \sum_{| \alpha | =
j}\tfrac{D^{\alpha} v(x)n^{\alpha}}{\alpha !}$ for multi-indices $\alpha
= (\alpha_1, \ldots, \alpha_d)$, $|\alpha| = \sum_{i} \alpha_i$ and
$n^{\alpha} = n_1^{\alpha_1} n_2^{\alpha_2} \cdots n_d^{\alpha_d}$ is
used. The constant $\gamma_c^f$ denotes a dimensionless stability parameter.

In \cite{Burman2010} and later in~\cite{BurmanHansbo2013},
alternative ghost penalties were proposed which were based on a
\emph{local projection stabilization}. For a given
patch $P$ of $\diam(P) \lesssim h$ containing the two elements $T_1$
and $T_2$, one defines the $L^2$ projection
$\pi_{P} : L^2(P) \to \PP_{p}(P)$ onto the space of polynomials of
order $p$ associated with the patch~$P$. For $v \in V_h$,
the fluctuation operator $\kappa_P = \Id - \pi_P$ measures then the deviation of
$v|_P$ from being a polynomial defined on $P$.
By choosing certain patch definitions, 
a coupling between
elements with a possible small cut and those with a fat
intersection is ensured. One patch choice arises naturally from the definition
of $\mcF_h^g$ by defining
the patch $P(F) = T^+_F \cup T^-_F$ for two elements $T^+_F$, $T^-_F$ sharing the interior face $F$ and
setting
\begin{align}
  \mcP_1 = \{P(F) \}_{F\in \mcF_h^g}.
  \label{eq:patches-face-based}
\end{align}
A second possibility is to use neighborhood patches $\omega(T)$,
\begin{align}
  \mcP_2 = \{\omega(T) \}_{T \in \mcT_{\Gamma}}.
\end{align}
Finally, one can mimic the cell agglomeration approach taken in
classical unfitted discontinuous Galerkin
approaches~\cite{BastianEngwer2009,HeimannEngwerIppischEtAl2013,SollieBokhoveVegt2011,JohanssonLarson2013}
by associating to each cut element $T \in \mcT_{\Gamma}$ with a small
intersection $|T \cap \Omega|_d \ll |T|_d$ an element
$T' \in \omega(T)$ satisfying the fat intersection property
$|T' \cap \Omega|_d \geqslant c_s |T'|^d$.
Introducing the ``agglomerated
patch'' $P_a(T) = T \cup T'$, a proper collection of patches
is given by
\begin{align}
  \mcP_3 = \{P_a(T)  \st T \in \mcT_{\Gamma} \wedge  |T \cap \Omega|_d \leqslant c_s |T|_d \}.
\end{align}
The resulting \emph{local projection} based ghost penalty extending the $L^2$ norm in the sense
of~\ref{ass:ghost-penalty-l2-ext} are then defined as follows:
\begin{align}
  g_c^p(v,w) &= \gamma_c^p c_0 \sum_{P \in \mcP} ( \kappa_P v,  \kappa_P w)_P, \quad \mcP \in \{\mcP_1, \mcP_2, \mcP_3\}.
\end{align}
Finally, an elegant version of a patch-based ghost penalty
avoiding  the assembly of local projection matrices completely 
was recently proposed in~\cite{PreusLehrenfeldLube2018}.
Using the natural global polynomial extension $u_i^e$
every polynomial $u_i$ on an element $T_i$ possesses,
a volume-based jump on a patch $P = T_1 \cup T_2$ can be defined by
$\jump{u}_P = u_1^e - u_2^e$ which give raise to the \emph{volume based} ghost penalty
\begin{align}
  g_c^v(v,w) &= \gamma_b^v c_0 \sum_{P \in \mcP_1} (\jump{v}_P,\jump{w}_P)_P, \quad \mcP \in \{\mcP_1, \mcP_3\}.
\end{align}
\begin{lemma}
    Each of the face, projection and volume-based realizations of $g_c$ 
    satisfies Assumptions~\ref{ass:weak-consistency} and \ref{ass:ghost-penalty-l2-ext}.
  \end{lemma}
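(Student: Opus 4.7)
The plan is to verify Assumptions \ref{ass:weak-consistency} and \ref{ass:ghost-penalty-l2-ext} separately for each of the three realizations, exploiting that all three enjoy essentially the same two structural properties: (i) they vanish (or are small) on smooth functions, giving weak consistency, and (ii) for discrete functions they are equivalent, on each relevant patch, to a norm measuring the mismatch between polynomial extensions of the two constituent cells. The main ingredients are standard polynomial inverse estimates together with the $L^2(P)$-polynomial equivalence on patches of diameter $\sim h$, combined with a short-chain argument that uses the fat intersection property~\ref{ass:fat-intersection-property} to pass from small cut elements to elements with fat intersection.

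\textbf{Step 1 (Weak consistency).} For the face-based form $g_c^f$, I would write $\jump{\partial_n^j\pi_h^e u}_F = \jump{\partial_n^j(\pi_h^e u-u)}_F$ on every $F\in\mcF_h^g$, since $u\in H^r(\Omega)$ with $r\geqslant j+1$ has no jump in its $j$-th normal derivative across interior faces of the extended domain. The element-wise face interpolation estimate~\eqref{eq:interpol-est-cut-F} then gives $\|\partial_n^j(\pi_h^e u-u)\|_F^2\lesssim h^{2(r-j)-1}\|u\|_{r,\omega(F)}^2$, and summing the telescoping $h^{2j+1}$-scaled contributions produces $|\pi_h^e u|_{g_c^f}^2\lesssim c_0 h^{2r}\|u\|_{r,\Omega}^2\lesssim \tau_c^{-1}h^{2r}\|u\|_{r,\Omega}^2$, using $c_0\leqslant \|c\|_{0,\infty,\Omega}\lesssim \tau_c^{-1}$. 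For $g_c^p$ I would use $\kappa_P\pi_h^e u=\kappa_P(\pi_h^e u-u^e)+\kappa_P u^e$ and exploit the stability $\|\kappa_P\|_{L^2(P)\to L^2(P)}\leqslant 1$ together with a Bramble–Hilbert type argument ($\kappa_P$ annihilates $\PP_p(P)$) to obtain $\|\kappa_P u^e\|_P\lesssim h^r\|u\|_{r,P}$; an analogous reduction handles $\kappa_P(\pi_h^e u-u^e)$ via~\eqref{eq:interpol-est-cut-T}. For $g_c^v$ I would use that $\jump{u^e}_P=0$ for the extension, so $\jump{\pi_h^e u}_P$ coincides with the jump of the interpolation error and can be bounded by $\|\pi_h^e u-u^e\|_{P}$ (after invoking the inverse estimate $\|q_1^e-q_2^e\|_P\lesssim \|q_1-q_2\|_{T_1\cup T_2}$ for polynomials).

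\textbf{Step 2 (Upper bound in \ref{ass:ghost-penalty-l2-ext}).} For each realization, the upper bound $|v|_{g_c}^2\lesssim \tau_c^{-1}\|v\|_{\mcT_h}^2$ follows by routine inverse estimates on polynomials: for $g_c^f$, the inverse trace inequality $\|\partial_n^j v\|_F\lesssim h^{-j-\onehalf}\|v\|_T$ exactly cancels the $h^{2j+1}$ weighting; for $g_c^p$, the stability of $\kappa_P$ in $L^2(P)$; and for $g_c^v$, the polynomial extension inverse inequality $\|q^e\|_P\lesssim \|q\|_T$ valid for patches of size $h$. In all three cases the resulting bound carries a factor $c_0$ which we absorb into $\tau_c^{-1}$ as above.

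\textbf{Step 3 (Lower bound in \ref{ass:ghost-penalty-l2-ext}: the extension property).} This is the main obstacle, since it is here that geometric robustness enters. I would proceed as in \cite{Burman2010,MassingLarsonLoggEtAl2013a,PreusLehrenfeldLube2018}. Given $T\in\mcT_\Gamma$, Assumption~\ref{ass:fat-intersection-property} furnishes a neighbor $T'\in\omega(T)$ with $|T'\cap\Omega|_d\gtrsim |T'|_d$; by $L^2$-polynomial equivalence on $T'$ one has $\|v\|_{T'}^2\lesssim \|v\|_{T'\cap\Omega}^2\leqslant \|v\|_\Omega^2$. To reach $T$ from $T'$, I would connect them by a uniformly bounded chain of face-neighboring elements and, on each step of the chain, control $\|v\|_{T_{k}}^2\lesssim \|v\|_{T_{k-1}}^2+\|v_{k}^e-v_{k-1}^e\|_{T_k\cup T_{k-1}}^2$ using polynomial $L^2$-equivalence on the two-cell patch. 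For $g_c^v$ this last patch term is exactly $\|\jump{v}_{P}\|_P^2$; for $g_c^p$ it equals (up to constants) $\|\kappa_P v\|_P^2$, since two polynomials of order $p$ agreeing with their patch projection must coincide; and for $g_c^f$ it is controlled by the full sum of normal-derivative jumps $\sum_j h^{2j+1}\|\jump{\partial_n^j v}\|_F^2$ via the well-known local argument that these jumps span the orthogonal complement in $\PP_p(P)$ of the continuous piecewise polynomials. Telescoping along the chain and multiplying by $\tau_c^{-1}$ then produces $\tau_c^{-1}\|v\|_{\mcT_h}^2\lesssim \tau_c^{-1}\|v\|_\Omega^2+|v|_{g_c}^2$, completing the equivalence.

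The hardest step is the third one, because it must be carried out uniformly in the cut configuration; the key non-obvious point is that the stabilization terms constructed from only a few local geometric entities (faces, projections, or volume jumps) suffice to control the difference between the polynomial representations on any two elements in a shape-regular patch. Once this local polynomial-mismatch estimate is established for a single patch $P$, the rest is combinatorial: a short chain of uniformly bounded length — whose existence is guaranteed by Assumption~\ref{ass:fat-intersection-property} and shape-regularity — propagates control from $T'$ to $T$ at the cost of only an $\mcO(1)$ factor.
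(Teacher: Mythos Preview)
Your proposal is correct and follows essentially the same approach as the paper. In fact, the paper's own proof is nothing more than a pointer to the literature --- it simply states that the arguments in \cite{Burman2010,MassingLarsonLoggEtAl2013a} (for $g_c^f$ and $g_c^p$) and \cite{PreusLehrenfeldLube2018} (for $g_c^v$) carry over with only minimal adaptation to discontinuous ansatz functions --- whereas you have actually written out a faithful sketch of those arguments: the jump-of-interpolation-error trick for weak consistency, inverse estimates for the upper bound, and the fat-intersection chain argument with patchwise polynomial-mismatch control for the lower bound.
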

  \begin{proof}
    As the original proofs require only minimal adaption to derive \ref{ass:weak-consistency} and
    \ref{ass:ghost-penalty-l2-ext} for discontinuous ansatz functions instead of continuous ones,
    we refer to~\cite{Burman2010} and~\cite{MassingLarsonLoggEtAl2013a}
    for the analysis of $g_h^p$ and $g_h^f$, and to~\cite{PreusLehrenfeldLube2018} for the volume penalty $g_h^v$.
  \end{proof}

  Next, we design proper realization of the advection related ghost penalty $g_b$.
  The natural idea is to start from $g_c$ and to replace $v$ with $\phi_b^{\onehalf} \bnabla v$
  as we wish to control the $L^2$ norm of the scaled streamline derivative.
  This idea leads us to the following lemma.
  \begin{lemma}
    \label{lem:ghost-penalty-strdiff}
    Each of the face, projection and volume-based realizations of $g_b$ defined by
  \begin{align}
    \label{eq:ghost-penalty-ghf}
    g_b^f(v,w) &= \gamma_b^f  \sum_{j=0}^p \phi_b h^{2j+1} (\jump{b_h^P \cdot \nabla \partial_n^j v},
                 \jump{b_h^P \cdot \nabla\partial_n^j w})_{\mcF_h^g},
               \\
    \label{eq:ghost-penalty-ghp}
  g_b^p(v,w) &= \gamma_b^p \sum_{P \in \mcP} \phi_b (\kappa_p  (b_h^P \cdot  \nabla v) , \kappa_p ( b_h^P \cdot  \nabla w))_P, \quad \mcP \in \{\mcP_1, \mcP_2, \mcP_3\},
               \\
    \label{eq:ghost-penalty-ghv}
    g_b^v(v,w) &= \gamma_b^v \sum_{P \in \mcP_1} \phi_b
                 (\jump{b_h^P \cdot \nabla v}_P, \jump{b_h^P \cdot \nabla w}_P)_P, \quad \mcP \in \{\mcP_1, \mcP_3\}
  \end{align}
  satisfies Assumption~\ref{ass:weak-consistency} and \ref{ass:ghost-penalty-sdnorm-ext}.
  Here, we choose $b_h^P$ to be a patch-wise defined, constant vector-valued function satisfying
  the assumptions of Lemma~\ref{lem:norm_rsb_beta_diff_const}.
  For $g_b^f$, the vector field $b_h^P$ is understood to be defined on each face patch $P(F) = T_F^+ \cup T_F^-$.
\end{lemma}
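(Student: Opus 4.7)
The plan is to reduce each of the three claims to the corresponding established estimate for $g_c^f$, $g_c^p$, and $g_c^v$, applied not to $v$ itself but to its patch-wise surrogate streamline derivative $w_P = b_h^P \cdot \nabla v$. The extra ingredient needed beyond that reduction is a mechanism to pass between the surrogate field $b_h^P$ and the original field $b$, and this is exactly what Lemma~\ref{lem:norm_rsb_beta_diff_const} together with Corollary~\ref{cor:ghost-penalty-sdnorm-ext} provide, at the cost of a controlled $\tau_c^{-1}\|v\|_{\mcT_h}^2$ remainder which is already permitted on the right-hand side of \eqref{eq:ghost-penalty-sdnorm-ext}.

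To establish Assumption~\ref{ass:ghost-penalty-sdnorm-ext}, I would fix $v \in V_h$ and exploit the fact that since $b_h^P$ is patch-wise constant, $w_P = b_h^P \cdot \nabla v$ is piecewise polynomial of degree $p-1$ on every patch $P$, so that each of the existing $g_c$-analyses from~\cite{Burman2010,MassingLarsonLoggEtAl2013a,PreusLehrenfeldLube2018} applies verbatim with $v$ replaced by $w_P$ and with the prefactor $c_0$ replaced by $\phi_b$. This yields, patch by patch and then summed,
\[
\|\phi_b^{\onehalf} b_h^P \cdot \nabla v\|_{\mcT_h}^2
\lesssim \|\phi_b^{\onehalf} b_h^P \cdot \nabla v\|_{\Omega}^2 + |v|_{g_b}^2
\lesssim \|\phi_b^{\onehalf} b_h^P \cdot \nabla v\|_{\mcT_h}^2
\]
for each of the three realizations. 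A triangle inequality combined with Corollary~\ref{cor:ghost-penalty-sdnorm-ext} then allows one to replace $b_h^P$ by $b$ on both $\Omega$ and $\mcT_h$, producing the extra $\tau_c^{-1}\|v\|_{\mcT_h}^2$ term on the upper side, while invoking Assumption~\ref{ass:ghost-penalty-l2-ext} in the reverse direction delivers the allowed $\tau_c^{-1}\|v\|_{\Omega}^2 + |v|_{g_c}^2$ on the lower side.

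To obtain Assumption~\ref{ass:weak-consistency}, I would set $u_h = \pi_h^e u$ and denote by $u^e$ the Stein extension. The key observation is that on each patch $P$ on which $b_h^P$ is constant, the function $b_h^P \cdot \nabla u^e$ is smooth, so every normal jump, projection fluctuation, or volumetric jump of $b_h^P \cdot \nabla u^e$ appearing in $g_b^f$, $g_b^p$, or $g_b^v$ vanishes identically. One may therefore replace $u_h$ by the approximation error $u_h - u^e$ inside the seminorm and estimate it via the standard inverse/trace tools together with \eqref{eq:interpol-est-cut-F}. A short scaling check for the face-based variant produces $\phi_b h^{2j+1} b_c^2 \cdot h^{2r-2j-3} = b_c h^{2r-1}$, matching the required bound after taking square roots, and an analogous scaling using the local approximation properties of $\kappa_P$ or $\jump{\cdot}_P$ delivers the same estimate for $g_b^p$ and $g_b^v$.

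The main obstacle I anticipate is not a deep one: it is the bookkeeping that arises because the patch-wise constant surrogates $b_h^P$ can differ between overlapping patches and because $b_h^P \cdot \nabla v$ has polynomial degree $p-1$ rather than $p$. These issues are handled, respectively, by the uniformly bounded patch overlap (shape regularity plus quasi-uniformity, already used for Corollary~\ref{cor:ghost-penalty-sdnorm-ext}) and by the fact that each of the cited $g_c$ proofs applies equally well to polynomials of any order up to $p$, so the degree drop causes no loss.
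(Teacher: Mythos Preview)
Your argument for Assumption~\ref{ass:ghost-penalty-sdnorm-ext} is essentially the paper's: apply the underlying $g_c$-type norm equivalence to the surrogate $w_P = b_h^P\cdot\nabla v$ and then trade $b_h^P$ for $b$ via Lemma~\ref{lem:norm_rsb_beta_diff_const}, absorbing the remainder into $\tau_c^{-1}\|v\|^2$. The paper does this patch-by-patch (using the local equivalence $\|w\|_{T_1}^2 \sim \|w\|_{T_2}^2 + \|\jump{w}_P\|_P^2$ and then the fat-intersection property) rather than invoking the global $g_c$ statement, which sidesteps the patch-dependence of $b_h^P$ more cleanly, but the difference is cosmetic.

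Your weak-consistency argument, however, has a genuine gap. The claim that ``every normal jump, projection fluctuation, or volumetric jump of $b_h^P\cdot\nabla u^e$ vanishes identically'' because $u^e$ is smooth is wrong for two of the three realizations. For $g_b^p$, the fluctuation $\kappa_P w = (\Id - \pi_P)w$ vanishes only when $w\in\PP_p(P)$; smoothness of $b_h^P\cdot\nabla u^e$ does not make it a polynomial, so $\kappa_P(b_h^P\cdot\nabla u^e)\neq 0$ in general. For $g_b^v$, the volumetric jump $\jump{\cdot}_P$ is defined via polynomial extension and is not even meaningful for a non-polynomial argument. (Even for $g_b^f$, the face jumps of $\partial_n^j u^e$ are only zero if $u^e$ has enough classical regularity, which $u\in H^s$ does not guarantee for all $j\leqslant p$ when $s<p+1$.) Consequently you cannot replace $\pi_h^e u$ by $\pi_h^e u - u^e$ inside $|\cdot|_{g_b}$.

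The paper's fix is to subtract not $u^e$ but the \emph{patch-wise polynomial projection} $\pi_P u^e\in\PP_p(P)$: since $b_h^P\cdot\nabla\pi_P u^e$ is then a single polynomial on $P$, its volumetric jump, projection fluctuation, and all face-normal-derivative jumps genuinely vanish. One then splits $\pi_h^e u - \pi_P u^e = (\pi_h^e u - u^e) + (u^e - \pi_P u^e)$ and bounds each piece by the respective approximation properties, giving the required $b_c^{\onehalf}h^{r-\onehalf}\|u\|_{r,\Omega}$ scaling.
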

\begin{proof}
  Here, we consider only $g_b^v$ as the analysis for $g_b^p$ and
  $g_b^f$ is rather similar, and a detailed theoretical analysis of
  the $g_b^f$ can be also found in~\cite{MassingSchottWall2017}.
  
  We start with the verification of Assumption~\ref{ass:ghost-penalty-sdnorm-ext}.
  First recall, that for two incident elements $T_1$ and $T_2$ and
  $P = T_1 \cup T_2$, the norm equivalence
  $\| w \|_{T_1}^2 \sim \| w \|_{T_2}^2 + \| \jump{w}_P \|_{P}^2$
  holds for $w\in V_h$. Setting
  $w = \phi_b^{\onehalf} b_h^P \cdot \nabla v$ and using
  Lemma~\ref{lem:norm_rsb_beta_diff_const},
  Eq.(\ref{eq:norm_rsb_beta_diff_const}), to pass between
  $\phi_b^{\onehalf} b_h^P \cdot \nabla v$ and
  $\phi_b^{\onehalf} b_ \cdot \nabla v$, we conclude that
  \begin{align}
    \| \phi_b^{\onehalf} \bnabla v \|_{T_1}^2
    & \lesssim
      \| \phi_b^{\onehalf} b_h^P \cdot \nabla v \|_{T_1}^2
      + \| \phi_b^{\onehalf} (b_h^P - b) \cdot \nabla v\|_{T_1}^2
      \\
     &\lesssim
      \| \phi_b^{\onehalf} b_h^P \cdot \nabla v \|_{T_1}^2
      + \tau_c^{-1} \| v\|_{T_1}^2
      \\
    & \lesssim
      \| \phi_b^{\onehalf} b_h^P \cdot \nabla v \|_{T_2}^2
     + \| \jump{\phi_b^{\onehalf} b_h^P \cdot \nabla v}_P \|_{P}^2
      + \tau_c^{-1} \| v\|_{T_1}^2
    \\
    &\lesssim
    \| \phi_b^{\onehalf} \bnabla v \|_{T_2}^2
      + \tau_c^{-1} \| v\|_{T_2}^2
     + \| \jump{\phi_b^{\onehalf} b_h^P \cdot \nabla v}_P \|_{P}^2
      + \tau_c^{-1} \| v\|_{T_1}^2.
  \end{align}
  This together with the geometric assumption~\ref{ass:fat-intersection-property}
  and
  the norm equivalence
  $\| \phi_b^{\onehalf} \bnabla v \|_{T_2}^2 \sim \| \phi_b^{\onehalf} \bnabla v \|_{T_2\cap\Omega}^2$
  on elements $T_2$ with a fat intersection implies that
  \begin{align}
    \| \phi_b^{\onehalf} \bnabla v \|_{\mcT_h}^2
    \lesssim
    \| \phi_b^{\onehalf} \bnabla v \|_{\Omega}^2
    + |v|_{g_b^v}^2  + \tau_c^{-1} \| v \|_{\Omega} + |v|_{g_c}^2,
  \end{align}
  proving the first inequality in Assumption~\ref{ass:ghost-penalty-sdnorm-ext}.
  The second inequality easily follows from assumption~\ref{ass:ghost-penalty-sdnorm-ext}
  implying that  $|v|_{g_c}^2\lesssim \tau_c^{-1} \|v\|_{\mcT_h}^2$ and the fact
  that $|v|_{g_b^v}^2\lesssim
  \| \phi_b^{\onehalf} \bnabla v \|_{\mcP}^2 + \tau_c^{-1} \| v \|_{\mcP}^2$.

  To prove the weak consistency assumption~\ref{ass:weak-consistency},
  simply observe that
  $\jump{ \phi_b^{\onehalf} b_h^P \cdot \nabla \pi_p u^e}_P = 0$ for $u \in H^s(\Omega)$, $s \geqslant 1$,
  and the patch-wise defined $L^2$ projection $\pi_P$, 
  and thus
  \begin{align}
    \sum_{P \in \mcP}\| \jump{ \phi_b^{\onehalf} b_h^P \cdot \nabla \pi_h^e u } \|_{P}^2
    &=
    \sum_{P \in \mcP}\| \jump{ \phi_b^{\onehalf} b_h^P \cdot \nabla (\pi_h^e u - \pi_P u^e) } \|_{P}^2
    \\
    &\lesssim
    \sum_{P \in \mcP}\| \phi_b^{\onehalf} b_h^P \cdot \nabla (\pi_h^e u - u^e ) \|_{P}^2
    + \sum_{P \in \mcP}\| \phi_b^{\onehalf} b_h^P \cdot \nabla ( u - \pi_P u^e ) \|_{P}^2
    \\
    &\lesssim
      \phi_b b_c^2 h^{2(r-1)} \|u^e\|_{r,\mcT_h}^2
      \lesssim
     b_c h^{2r -1} \|u\|_{r,\Omega}^2.
  \end{align}
  which concludes the proof.
\end{proof}
\begin{remark}
  \label{rem:unified-gh}
  It is possible to replace the convection and reaction related
  stabilization forms $g_b$ and $g_c$ by a unified ghost
  penalty.  For instance, for the face-based
  realizations~(\ref{eq:gc_face_based})
  and~(\ref{eq:ghost-penalty-ghf}),
  a single ghost penalty
  of the form
  \begin{align}
  \label{eq:unified-ghf}
    g^f(v,w) &= \gamma^f \sum_{j=0}^p  \Bigl(c_0  +
               \frac{b_c}{h}\Bigr)  h^{2j+1} (\jump{\partial_n^j v},  \jump{\partial_n^j w})_{\mcF_h^g},
  \end{align}
  can be used instead, at the cost of introducing some
  additional (order-preserving) cross-wind diffusion.
  Consequently, only face jump penalties of the form $[\partial_n^j (\cdot)]$ need to be implemented.
  We refer to~\cite{MassingSchottWall2017}[Remark 3.4] for a detailed discussion.
\end{remark}

\section{Time-Stepping}
 \label{sec:time-stepping}
After the detailed theoretical analysis of the stabilized CutDG
method for the stationary advection-reaction problem in the
previous sections, we now formulate a CutDG method for the
time-dependent problem~\eqref{eq:time-dep-cutDGM}.
The presented formulation is only thought as 
a first demonstration of how ghost penalty techniques
can be employed to devise explicit time-stepping
methods which do not suffer from a severe time-step
restriction caused by small cut elements.
A detailed theoretical analysis is beyond the scope of the present
work and will be subject of a forthcoming paper.

Following and extending the notation in
Section~\ref{ssec:wavy-boundary-example}, the mass matrix~$\mcM$
associated with the stabilized $L^2$ inner product is defined by
the relation
\begin{equation}
  (\mcM V, W)_{\RR^N} = M_h(v, w) \quad \foralls v,w \in V_h,
  \label{eq:mass-matrix-def}
\end{equation}
where again $V, W \in \RR^N$ denote the coefficient vectors 
of $v$ and $w$. Denoting the time-depending coefficient vector
$U(t) \in$ associated with the semi-discrete solution $u_h(t)$,
we are left with solving a system of the form
\begin{subequations}
  \label{eq:ode-system-def}
\begin{align}
U'(t)  & =  \mcL U(t) + F(t),\\
U(0) & = U_0,
\end{align}
\end{subequations}
where we set $\mcL = \mcM^{-1} \mcA$
and defined  the vector $F(t) \in \RR^N$ 
by the relation
 $ (\mcM V, F(t))_{\RR^N} = (f(t), v_h)_{\Omega}$;
 that is, $F$ is usual load vector premultiplied with $\mcM^{-1}$.
To arrive at a fully discrete system, 
we shall use either the forward Euler method,
\begin{equation}
U^{n+1}=U^n + \Delta t \left ( \mcL U^n+F(t^n) \right ),
\label{eq:RungeKutta1}
\end{equation}
or the following explicit third order Runge-Kutta method taken from
\cite{DiPietroErn2012,BurmanErnFernandez2010}
\begin{subequations}
\label{eq:RungeKutta3}
\begin{align}
U^{n,1} & = U^n + \Delta t ( \mcL U^n +F(t^n) ),\\
U^{n,2} & = \tfrac{1}{2} (U^{n}+U^{n,1}) + \tfrac{\Delta t}{2} ( \mcL U^n  + F(t^n) + \Delta t F'(t^n) ),\\
U^{n + 1} & = \tfrac{1}{3}( U^{n} + U^{n,1} + U^{n,2} ) + \tfrac{\Delta t}{3} 
\bigl( 
  \mcL U^{n,2}  + F(t^n) + \Delta t F'(t^n) + \tfrac{\Delta t^2}{2} F''(t^n)  
\bigr).
\end{align}
\end{subequations}
In \eqref{eq:RungeKutta3}, $U^{n,s}$, $s\in \{1,2\}$ denotes the intermediate stages.
For simplicial meshes, the combination of these time-stepping 
methods with standard fitted DG-method that corresponds to 
\eqref{eq:time-dep-cutDGM} were studied in \cite{BurmanErnFernandez2010, DiPietroErn2012}. 
It was shown that under a hyperbolic CFL condition $h$: $\Delta t = C h$,
\eqref{eq:RungeKutta1} combined with $\PP_0(\mcT_h)$ converges
as $\mcO(h^{1/2} + \Delta t)$, and
\eqref{eq:RungeKutta3} combined with $\PP_2(\mcT_h)$ converges
as $\mcO(h^{5/2}+\Delta t^3)$.
For details, we refer to \cite[Section~3.1]{DiPietroErn2012} and
\cite{BurmanErnFernandez2010}.

\begin{remark}
  If the mass-matrix related ghost penalty $g_m$ uses 
  face-based stabilizations of the form~\eqref{eq:gc_face_based}
  together with the face set~\eqref{eq:faces-gp-bvp}, the mass matrix
  will no longer be block diagonal as in standard non-cut DG methods,
  since the elements in $\mcT_{\Gamma}$  are interlaced with each other through the
  face set~$\mcF_h^g$. As a remedy, one could
  employ local projection or volume based ghost penalties
  instead, together with a more careful choices of the associated
  patches $\mcP$, so that in the vicinity of the embedded boundary,
  the mass matrix can be inverted locally on patches.
\end{remark}

\section{Numerical results}
\label{ssec:numerical-results}
This section is devoted to a number of numerical experiments
supporting our theoretical findings. We first investigate the 
method~\eqref{eq:cutDGM} for the stationary problem. Convergence rate studies 
for smooth manufactured solutions defined on complex 2D and
3D domains are presented to demonstrate the applicability of our
method to non-trivial domain geometries. The goal of the second,
more detailed convergence study is two-fold. First, we examine the
experimental order of convergence when using higher order elements
up to degree $p=3$. Second, using the same domain set-up, we
consider a typical test case of a rough solution with a sharp
internal layer to illustrate that the proposed CutDG method
possesses the same robustness as the corresponding standard fitted
DG method. We then consider two numerical studies
illustrating the importance of the ghost penalties for the
geometrical robustness of the derived a priori error and condition
number estimates.
Finally, we conclude this section by a brief study of the time-dependent method 
\eqref{eq:time-dep-cutDGM}. As for the stationary problem, we examine both the 
order of convergence and the geometrical robustness of the method.

\subsection{Convergence rate experiments}
In the first series of experiments, we test the convergence of
the proposed stationary CutDG method \eqref{eq:cutDGM} over various geometries, dimensions, and
polynomial orders. Common to all examples below, a background mesh
$\widetilde{\mcT}_0$ for the embedding domain
$\widetilde{\Omega}=[-a,a]^d$ is generated and successively refined, from which
the active background meshes $\{\mathcal{T}_k\}^N_{k=0}$ are
extracted. The mesh size is then given by $h_k = 2a\cdot 2^{-3-k}$.
For a given polynomial order $p$ and
active mesh $\mathcal{T}_k$, we
compute the numerical solution $u_k^p \in \PP_p(\mcT_k)$
from~\eqref{eq:cutDGM}
using the ghost penalties
defined in \eqref{eq:ghost-penalty-ghf} and
\eqref{eq:gc_face_based}. The experimental order of convergence
(EOC) shown in the convergence tables below is calculated using
\begin{align}
  \text{EOC}(k,p) =
  \dfrac{\log(E_{k-1}^p/E_{k}^p)}{\log(h_{k-1}/h_k)}, 
\end{align} 
where $E_k^p
= \| e_k^p \| = \| u - u_k^p \| $ denotes the  error of the numerical
approximation $u_k^p$ measured in a certain (semi-)norm $\|\cdot
\|$.    
The error norms considered in our tests are the $L^2$
norm $\| \cdot \|_{\Omega}$, the upwind flux semi-norm $\trootonehalf\| |b\cdot
n|^{\onehalf} \jump{\cdot} \|_{\mathcal{F}_h}$, the streamline diffusion
semi-norm $\|\phi^{\onehalf} b\cdot \nabla (\cdot) \|_{\Omega}$, 
as well as the weighted $L^2$ norm 
$\trootonehalf \||b\cdot n|^{\onehalf} (\cdot)\|_{\Gamma}$,
in accordance with the a priori error estimate presented in
Section~\ref{ssec:aprior-analysis}. 
In addition, we also tabulate EOCs for the
$\| \cdot \|_{\Gamma}$ and $\|\cdot\|_{L^{\infty}(\Omega)}$
norms.

\subsubsection{Flower and popcorn domains}
\label{ssec:flower-popcorn-example}
To demonstrate the capability of the presented CutDGM to treat complex
geometries in different dimensions, we consider two test cases.  In
the first test case, the advection-reaction problem~\eqref{eq:adr-problem} is
numerically solved over a two-dimensional flower domain that is
defined by
 \begin{align}
   \Omega &= \{ (x,y) \in \RR^2 \st \phi(x,y) < 0 \} \quad \text{with }
   \Phi(x,y) = \sqrt{x^2 + y^2} - r_0 - r_1 \cos(5\atantwo(y,x)),
 \end{align}
 with $r_0 = 0.5$ and $r_1 = 0.15$, see Figure~\ref{fig:domain-flower-popcorn} (left).
 The manufactured solution $u$, the velocity field~$b$, and the reaction term $c$
 are taken from~\cite{HoustonSchwabSueli2000}
 and
 given by
 \begin{equation}
 \begin{gathered}
   u(x,y)=1+\sin(\pi(1+x)(1+y)^2/8), \quad
 b(x,y)=(0.8,0.6), \quad c = 1.0.
 \end{gathered}
   \label{eq:manufactured_uandb}
 \end{equation}
\begin{figure}[htb]
  \begin{center}
    \begin{minipage}[t]{0.49\textwidth}
    \vspace{0pt}
    \includegraphics[width=1.0\textwidth]{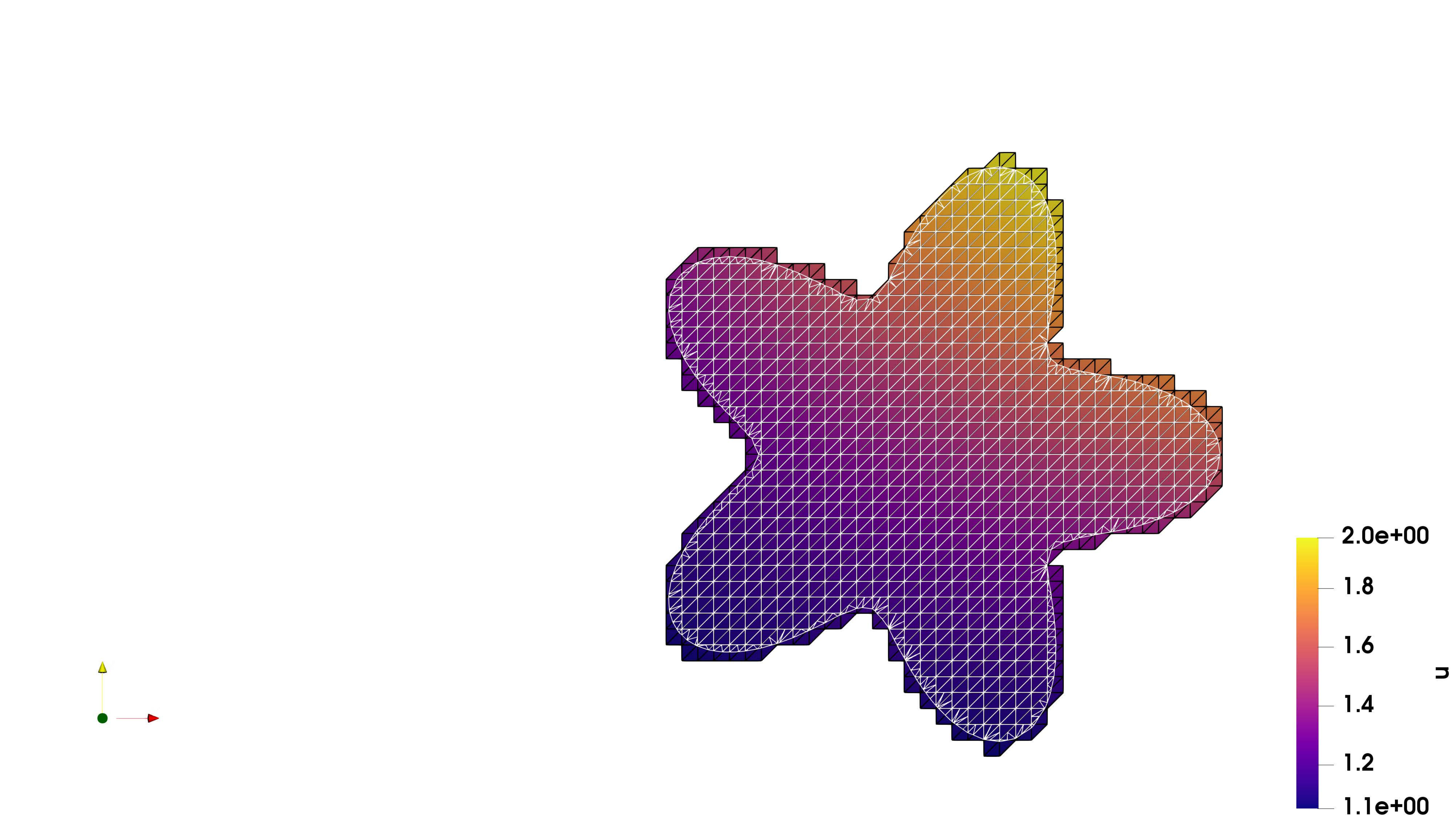}
  \end{minipage}
  \begin{minipage}[t]{0.49\textwidth}
    \vspace{0pt}
    \includegraphics[width=1.0\textwidth]{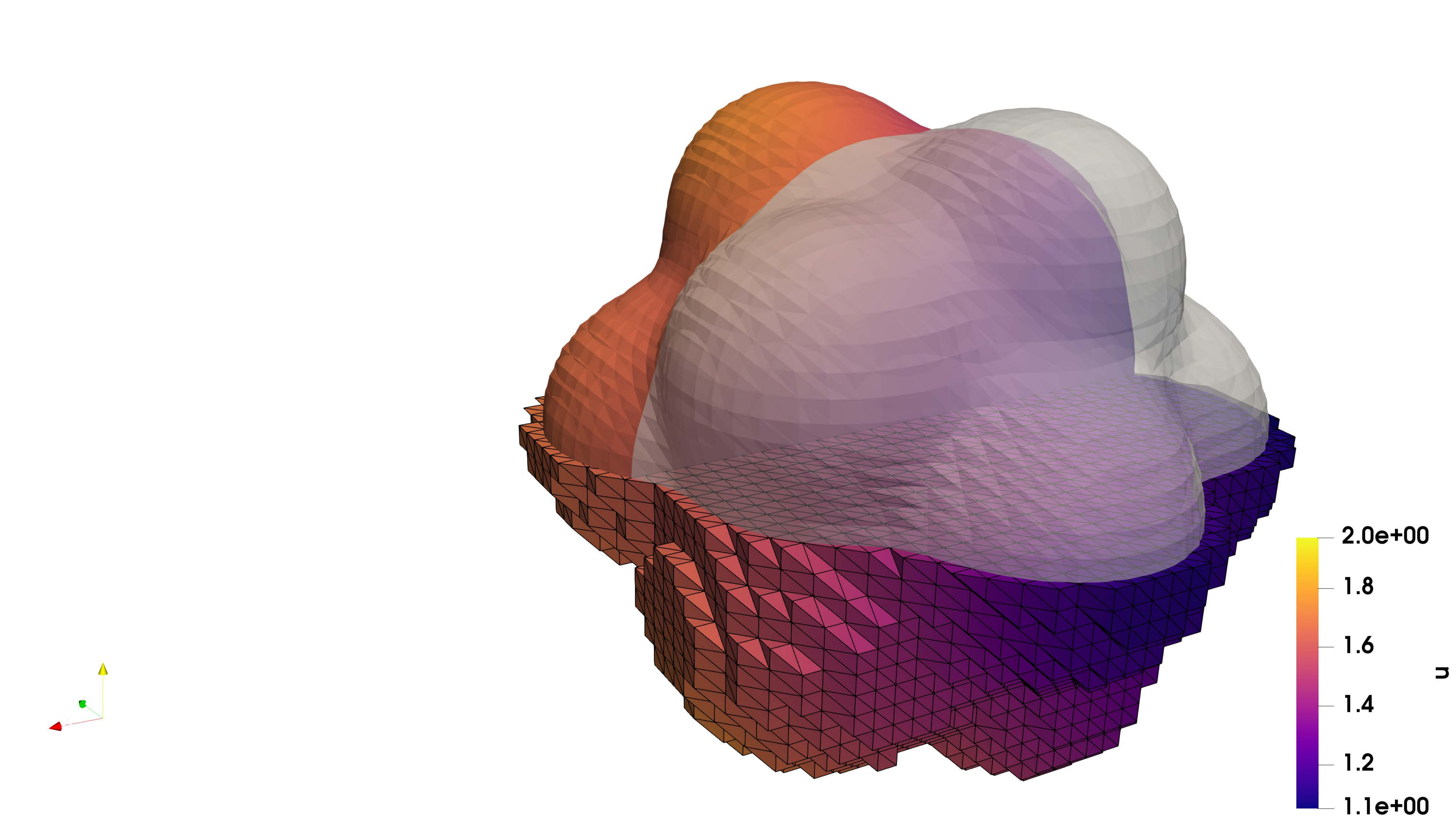}
  \end{minipage}
\end{center}
\caption{Solution plots for two dimensional flower (left) and three
  dimensional popcorn (right) examples. In both cases the solution is
  plotted over active background mesh and domain
  boundaries and physical domains are shown as
  well.}
\label{fig:domain-flower-popcorn}
\end{figure}
For the three-dimensional test case, the domain of interest is
a popcorn like geometry defined by
  \begin{equation}
  \begin{aligned}
&\Phi(x,y,z)=\sqrt{x^2+y^2+z^2}-r_0-\sum_{k=0}^{11}A \exp\big(-((x-x_k)^2+(y-y_k)^2+(z-z_k)^2)/ \sigma^2\bigr), \\
&\text{where  } \\
&(x_k,y_k,z_k)=\frac{r_0}{\sqrt{5}}\Big( 2 \cos \Big(\frac{2k \pi}{5}\Big), 2 \sin \Big(\frac{2k \pi}{5}\Big), 1  \Big), \quad 0 \leq k \leq 4, \\
&(x_k,y_k,z_k)=\frac{r_0}{\sqrt{5}}\Big( 2 \cos \Big(\frac{(2(k-5)-1) \pi}{5}\Big), 2 \sin \Big(\frac{(2(k-5)-1) \pi}{5}\Big), -1  \Big), \quad 5 \leq k \leq 9, \\
&(x_k,y_k,z_k)=(0,0,r_0), \quad k=10, \\
&(x_k,y_k,z_k)=(0,0,-r_0), \quad k=11,\\
&\text{and }\\
&r_0=0.6, \sigma=0.2, A=2,
\end{aligned}
   \label{eq:ls-popcorn}
  \end{equation}
  see Figure~\ref{fig:domain-flower-popcorn}~(right). This time, we
  set the manufactured solution~$u$, the vector field~$b$, and the
  reaction term~$c$ to
 \begin{equation}
 \begin{gathered}
 u(x,y,z)=1+\sin(\pi(1+x)(1+y^2)(1+z^3)/8), \quad
 b(x,y,z)=(0.8,0.6,0.4), \quad c = 1.0.
 \end{gathered}
   \label{eq:manufactured_uandb_popcorn}
 \end{equation}
 For both examples we employ linear elements together with stabilization
 parameters $\gamma_b^f=\gamma_c^f=0.01$.
 The calculated EOCs are summarized in~Table~\ref{tab:convanalysis_2D_3d} and confirm
 the theoretically expected convergence rates of
 $\mcO(h^{\threehalf})$. 
 For the $L^2$ norm, we even observe second-order convergence
 for these particular examples and mesh definitions, but we point out that this cannot be expected
 in general, see~\cite{Peterson1991,Burman2005}.
 The computed convergence rate orders
 using the $\|\cdot\|_{\Gamma}$ and $\| \cdot \|_{L^{\infty}}$ norms 
 are close to $2$ and $1.5$, respectively. 

 Finally, we repeat the numerical experiment with $\PP_0$ elements,
 but this time, we disable any ghost penalty stabilization.
 The resulting errors for the 2D flower domain are reported in Table~\ref{tab:convanalysis_2D_3d_p0},
 with rates between $\mcO(h^{\onehalf})$
 and $\mcO(h)$,
 confirming the convergence rates
 predicted by Proposition~\ref{prop:apriori-est-p0}.
 The experimentally observed convergence rates for the 3D popcorn domain
 were very similar and are not reported here.

 \begin{table}[htb]
   \small
   \begin{center}
    \caption{Convergence rates for the 2D flower (top) and 3D popcorn (bottom) example using
      $\PP_1(\mcT_k)$.}
   \label{tab:convanalysis_2D_3d}
     \begin {tabular}{cr<{\pgfplotstableresetcolortbloverhangright }@{}l<{\pgfplotstableresetcolortbloverhangleft }cr<{\pgfplotstableresetcolortbloverhangright }@{}l<{\pgfplotstableresetcolortbloverhangleft }cr<{\pgfplotstableresetcolortbloverhangright }@{}l<{\pgfplotstableresetcolortbloverhangleft }c}%
\toprule Level $k$&\multicolumn {2}{c}{$\|e_k\|_{\Omega }$}&EOC&\multicolumn {2}{c}{$\trootonehalf \| |b\cdot n|^{\onehalf } \jump {e_k} \|_{\mathcal {F}_h}$}&EOC&\multicolumn {2}{c}{$\|\phi ^{\onehalf } b\cdot \nabla e_k \|_{\Omega }$}&EOC\\\midrule %
\pgfutilensuremath {0}&$8.69$&$\cdot 10^{-3}$&--&$1.17$&$\cdot 10^{-2}$&--&$1.82$&$\cdot 10^{-2}$&--\\%
\pgfutilensuremath {1}&$2.05$&$\cdot 10^{-3}$&\pgfutilensuremath {2.09}&$3.91$&$\cdot 10^{-3}$&\pgfutilensuremath {1.58}&$4.76$&$\cdot 10^{-3}$&\pgfutilensuremath {1.93}\\%
\pgfutilensuremath {2}&$5.25$&$\cdot 10^{-4}$&\pgfutilensuremath {1.96}&$1.30$&$\cdot 10^{-3}$&\pgfutilensuremath {1.59}&$1.64$&$\cdot 10^{-3}$&\pgfutilensuremath {1.53}\\%
\pgfutilensuremath {3}&$1.34$&$\cdot 10^{-4}$&\pgfutilensuremath {1.97}&$4.44$&$\cdot 10^{-4}$&\pgfutilensuremath {1.55}&$5.40$&$\cdot 10^{-4}$&\pgfutilensuremath {1.60}\\%
\pgfutilensuremath {4}&$3.33$&$\cdot 10^{-5}$&\pgfutilensuremath {2.01}&$1.56$&$\cdot 10^{-4}$&\pgfutilensuremath {1.51}&$2.02$&$\cdot 10^{-4}$&\pgfutilensuremath {1.42}\\%
\pgfutilensuremath {5}&$8.40$&$\cdot 10^{-6}$&\pgfutilensuremath {1.98}&$5.42$&$\cdot 10^{-5}$&\pgfutilensuremath {1.52}&$6.89$&$\cdot 10^{-5}$&\pgfutilensuremath {1.55}\\\midrule %
Level $k$&\multicolumn {2}{c}{$\trootonehalf \||b\cdot n|^{\onehalf } e_k \|_{\Gamma }$}&EOC&\multicolumn {2}{c}{$\|e_k\|_{L^{2}(\Gamma )}$}&EOC&\multicolumn {2}{c}{$\|e_k\|_{L^{\infty }(\Omega )}$}&EOC\\\midrule %
\pgfutilensuremath {0}&$1.20$&$\cdot 10^{-2}$&--&$2.55$&$\cdot 10^{-2}$&--&$4.29$&$\cdot 10^{-2}$&--\\%
\pgfutilensuremath {1}&$2.51$&$\cdot 10^{-3}$&\pgfutilensuremath {2.26}&$4.74$&$\cdot 10^{-3}$&\pgfutilensuremath {2.43}&$7.64$&$\cdot 10^{-3}$&\pgfutilensuremath {2.49}\\%
\pgfutilensuremath {2}&$7.00$&$\cdot 10^{-4}$&\pgfutilensuremath {1.84}&$1.26$&$\cdot 10^{-3}$&\pgfutilensuremath {1.91}&$3.43$&$\cdot 10^{-3}$&\pgfutilensuremath {1.15}\\%
\pgfutilensuremath {3}&$1.80$&$\cdot 10^{-4}$&\pgfutilensuremath {1.96}&$3.34$&$\cdot 10^{-4}$&\pgfutilensuremath {1.92}&$1.24$&$\cdot 10^{-3}$&\pgfutilensuremath {1.47}\\%
\pgfutilensuremath {4}&$4.51$&$\cdot 10^{-5}$&\pgfutilensuremath {1.99}&$8.81$&$\cdot 10^{-5}$&\pgfutilensuremath {1.92}&$3.16$&$\cdot 10^{-4}$&\pgfutilensuremath {1.97}\\%
\pgfutilensuremath {5}&$1.14$&$\cdot 10^{-5}$&\pgfutilensuremath {1.99}&$2.29$&$\cdot 10^{-5}$&\pgfutilensuremath {1.94}&$1.03$&$\cdot 10^{-4}$&\pgfutilensuremath {1.62}\\\bottomrule %
\end {tabular}%

     \\[3ex]
    \begin {tabular}{cr<{\pgfplotstableresetcolortbloverhangright }@{}l<{\pgfplotstableresetcolortbloverhangleft }cr<{\pgfplotstableresetcolortbloverhangright }@{}l<{\pgfplotstableresetcolortbloverhangleft }cr<{\pgfplotstableresetcolortbloverhangright }@{}l<{\pgfplotstableresetcolortbloverhangleft }c}%
\toprule Level $k$&\multicolumn {2}{c}{$\|e_k\|_{\Omega }$}&EOC&\multicolumn {2}{c}{$\trootonehalf \| |b\cdot n|^{\onehalf } \jump {e_k} \|_{\mathcal {F}_h}$}&EOC&\multicolumn {2}{c}{$\|\phi ^{\onehalf } b\cdot \nabla e_k \|_{\Omega }$}&EOC\\\midrule %
\pgfutilensuremath {0}&$1.86$&$\cdot 10^{-2}$&--&$1.82$&$\cdot 10^{-2}$&--&$3.48$&$\cdot 10^{-2}$&--\\%
\pgfutilensuremath {1}&$4.41$&$\cdot 10^{-3}$&\pgfutilensuremath {2.07}&$7.35$&$\cdot 10^{-3}$&\pgfutilensuremath {1.31}&$1.10$&$\cdot 10^{-2}$&\pgfutilensuremath {1.66}\\%
\pgfutilensuremath {2}&$1.07$&$\cdot 10^{-3}$&\pgfutilensuremath {2.05}&$2.58$&$\cdot 10^{-3}$&\pgfutilensuremath {1.51}&$3.11$&$\cdot 10^{-3}$&\pgfutilensuremath {1.83}\\%
\pgfutilensuremath {3}&$2.62$&$\cdot 10^{-4}$&\pgfutilensuremath {2.02}&$8.29$&$\cdot 10^{-4}$&\pgfutilensuremath {1.64}&$1.11$&$\cdot 10^{-3}$&\pgfutilensuremath {1.49}\\%
\pgfutilensuremath {4}&$6.54$&$\cdot 10^{-5}$&\pgfutilensuremath {2.00}&$2.73$&$\cdot 10^{-4}$&\pgfutilensuremath {1.60}&$3.54$&$\cdot 10^{-4}$&\pgfutilensuremath {1.65}\\\midrule %
Level $k$&\multicolumn {2}{c}{$\trootonehalf \||b\cdot n|^{\onehalf } e_k \|_{\Gamma }$}&EOC&\multicolumn {2}{c}{$\|e_k\|_{L^{2}(\Gamma )}$}&EOC&\multicolumn {2}{c}{$\|e_k\|_{L^{\infty }(\Omega )}$}&EOC\\\midrule %
\pgfutilensuremath {0}&$1.75$&$\cdot 10^{-2}$&--&$3.74$&$\cdot 10^{-2}$&--&$4.55$&$\cdot 10^{-2}$&--\\%
\pgfutilensuremath {1}&$4.62$&$\cdot 10^{-3}$&\pgfutilensuremath {1.92}&$9.17$&$\cdot 10^{-3}$&\pgfutilensuremath {2.03}&$1.27$&$\cdot 10^{-2}$&\pgfutilensuremath {1.84}\\%
\pgfutilensuremath {2}&$1.18$&$\cdot 10^{-3}$&\pgfutilensuremath {1.97}&$2.34$&$\cdot 10^{-3}$&\pgfutilensuremath {1.97}&$3.94$&$\cdot 10^{-3}$&\pgfutilensuremath {1.69}\\%
\pgfutilensuremath {3}&$3.02$&$\cdot 10^{-4}$&\pgfutilensuremath {1.96}&$6.17$&$\cdot 10^{-4}$&\pgfutilensuremath {1.92}&$1.15$&$\cdot 10^{-3}$&\pgfutilensuremath {1.77}\\%
\pgfutilensuremath {4}&$7.70$&$\cdot 10^{-5}$&\pgfutilensuremath {1.97}&$1.65$&$\cdot 10^{-4}$&\pgfutilensuremath {1.90}&$3.63$&$\cdot 10^{-4}$&\pgfutilensuremath {1.67}\\\bottomrule %
\end {tabular}%

   \end{center}
 \end{table}

 \begin{table}[htb]
   \small
   \begin{center}
    \caption{Convergence rates for the 2D flower  example using
      $\PP_0(\mcT_k)$.}
   \label{tab:convanalysis_2D_3d_p0}
     \begin {tabular}{cr<{\pgfplotstableresetcolortbloverhangright }@{}l<{\pgfplotstableresetcolortbloverhangleft }cr<{\pgfplotstableresetcolortbloverhangright }@{}l<{\pgfplotstableresetcolortbloverhangleft }cr<{\pgfplotstableresetcolortbloverhangright }@{}l<{\pgfplotstableresetcolortbloverhangleft }c}%
\toprule Level $N$&\multicolumn {2}{c}{$\|e_k\|_{\Omega }$}&EOC&\multicolumn {2}{c}{$\trootonehalf \| |b\cdot n|^{\onehalf } \jump {e_k} \|_{\mathcal {F}_h}$}&EOC&\multicolumn {2}{c}{$\|\phi ^{\onehalf } b\cdot \nabla e_k \|_{\Omega }$}&EOC\\\midrule %
\pgfutilensuremath {0}&$5.43$&$\cdot 10^{-2}$&--&$1.37$&$\cdot 10^{-1}$&--&$2.08$&$\cdot 10^{-1}$&--\\%
\pgfutilensuremath {1}&$3.27$&$\cdot 10^{-2}$&\pgfutilensuremath {0.73}&$1.04$&$\cdot 10^{-1}$&\pgfutilensuremath {0.40}&$1.37$&$\cdot 10^{-1}$&\pgfutilensuremath {0.61}\\%
\pgfutilensuremath {2}&$1.60$&$\cdot 10^{-2}$&\pgfutilensuremath {1.03}&$7.65$&$\cdot 10^{-2}$&\pgfutilensuremath {0.45}&$9.22$&$\cdot 10^{-2}$&\pgfutilensuremath {0.57}\\%
\pgfutilensuremath {3}&$8.38$&$\cdot 10^{-3}$&\pgfutilensuremath {0.93}&$5.42$&$\cdot 10^{-2}$&\pgfutilensuremath {0.50}&$6.28$&$\cdot 10^{-2}$&\pgfutilensuremath {0.55}\\%
\pgfutilensuremath {4}&$4.13$&$\cdot 10^{-3}$&\pgfutilensuremath {1.02}&$3.86$&$\cdot 10^{-2}$&\pgfutilensuremath {0.49}&$4.52$&$\cdot 10^{-2}$&\pgfutilensuremath {0.47}\\%
\pgfutilensuremath {5}&$2.19$&$\cdot 10^{-3}$&\pgfutilensuremath {0.91}&$2.73$&$\cdot 10^{-2}$&\pgfutilensuremath {0.50}&$3.10$&$\cdot 10^{-2}$&\pgfutilensuremath {0.55}\\\midrule %
Level $k$&\multicolumn {2}{c}{$\trootonehalf \||b\cdot n|^{\onehalf } e_k \|_{\Gamma }$}&EOC&\multicolumn {2}{c}{$\|e_k\|_{L^{2}(\Gamma )}$}&EOC&\multicolumn {2}{c}{$\|e_k\|_{L^{\infty }(\Omega )}$}&EOC\\\midrule %
\pgfutilensuremath {0}&$9.87$&$\cdot 10^{-2}$&--&$2.03$&$\cdot 10^{-1}$&--&$3.12$&$\cdot 10^{-1}$&--\\%
\pgfutilensuremath {1}&$6.03$&$\cdot 10^{-2}$&\pgfutilensuremath {0.71}&$1.21$&$\cdot 10^{-1}$&\pgfutilensuremath {0.74}&$2.21$&$\cdot 10^{-1}$&\pgfutilensuremath {0.50}\\%
\pgfutilensuremath {2}&$2.87$&$\cdot 10^{-2}$&\pgfutilensuremath {1.07}&$5.41$&$\cdot 10^{-2}$&\pgfutilensuremath {1.16}&$1.01$&$\cdot 10^{-1}$&\pgfutilensuremath {1.13}\\%
\pgfutilensuremath {3}&$1.54$&$\cdot 10^{-2}$&\pgfutilensuremath {0.90}&$3.17$&$\cdot 10^{-2}$&\pgfutilensuremath {0.77}&$7.17$&$\cdot 10^{-2}$&\pgfutilensuremath {0.49}\\%
\pgfutilensuremath {4}&$7.87$&$\cdot 10^{-3}$&\pgfutilensuremath {0.97}&$1.77$&$\cdot 10^{-2}$&\pgfutilensuremath {0.84}&$4.81$&$\cdot 10^{-2}$&\pgfutilensuremath {0.58}\\%
\pgfutilensuremath {5}&$4.10$&$\cdot 10^{-3}$&\pgfutilensuremath {0.94}&$9.88$&$\cdot 10^{-3}$&\pgfutilensuremath {0.84}&$2.64$&$\cdot 10^{-2}$&\pgfutilensuremath {0.87}\\\bottomrule %
\end {tabular}%

   \end{center}
 \end{table}

 \subsubsection{Wavy inflow and outflow boundary}
 \label{ssec:wavy-boundary-example}
Next, we study the performance of the proposed CutDG method
using high order elements for smooth solutions, and at the presence of
a sharp internal layer in the manufactured solution.
To cover both scenarios within a single problem set-up, we manufacture
an analytical solution $u_{\epsilon}$ depending on a parameter
$\epsilon$ which results in a smooth function if $\epsilon \sim 1$,
while choosing $\epsilon \ll 1$ 
produces a solution with a sharp internal layer.
In the latter case, the manufactured solution is practically discontinuous
whenever the internal layer cannot be resolved by the mesh.
In both test cases, 
the problem~\eqref{eq:adr-problem} is  solved on the domain
\begin{gather}
  \Omega = [-1.0,1.0]^2 \cap \{  \phi^+ < 0.85 \} \cap \{  \phi^- > -0.85 \},
  \intertext{where}
  \phi^+(x,y)=y-0.1\cos(8x),
  \qquad
  \phi^-(x,y)=y+0.1\cos(8x).
\end{gather}
The velocity field~$b$ and the analytical solution~$u_{\epsilon}$ are set to
\begin{equation}
  \begin{aligned} 
      u_{\epsilon}(x,y) &= \exp(\lambda(x,y) \arcsin ( (1-x^2) \pi \cos(2 \pi y)/25 ) )\arctan(\lambda(x,y) / \epsilon), && \quad\\
      b(x,y) &=((1-x^2) \pi \cos(2 \pi y)/25  ,(1+4x^2)/5 ),  &&\quad     
  \end{aligned}
  \label{eq:analytical-soln}
\end{equation}
where $\lambda(x,y)=(x-0.1\sin(2 \pi y))/5$.
It is also important to realize that
$\lambda$ defines the shape of the internal layer.
Thanks to the definition of the velocity field $b$, we observe that the inflow and outflow
boundaries are defined by the level sets 
\begin{equation}
 \Gamma^-  = \{  \phi^- = -0.85 \} \cap \Omega, \qquad
 \Gamma^+  = \{  \phi^+ = 0.85 \} \cap \Omega,
\label{eq:ls}
\end{equation}
and for $\epsilon \ll 1$, the streamlines of the velocity field $b$
are parallel to the discontinuity, see Figure \ref{fig:domain-wavy}
(bottom).
For our convergence tests, we 
study two extreme cases where we set $\epsilon=1.0$ first and
$\epsilon=10^{-8}$ later.  Recalling the a priori
error analysis from Sections~\ref{ssec:aprior-analysis}, the expected
convergence rate for smooth solutions is $\mcO(h^{p+\onehalf})$.  For the
discontinuous case, one can only expect a convergence order of
at most $\mcO(h^{\nicefrac{1}{2}})$ in the (global) $L^2$ norm,
independent of the chosen polynomial order $p$.
Consequently, we
consider only the case $p=1$ for the nearly discontinuous solution.
\begin{figure}[htb]
  \begin{subfigure}[t]{1.0\textwidth}
  \begin{center}
    \begin{minipage}[t]{0.49\textwidth}
    \vspace{0pt}
    \includegraphics[width=1.0\textwidth]{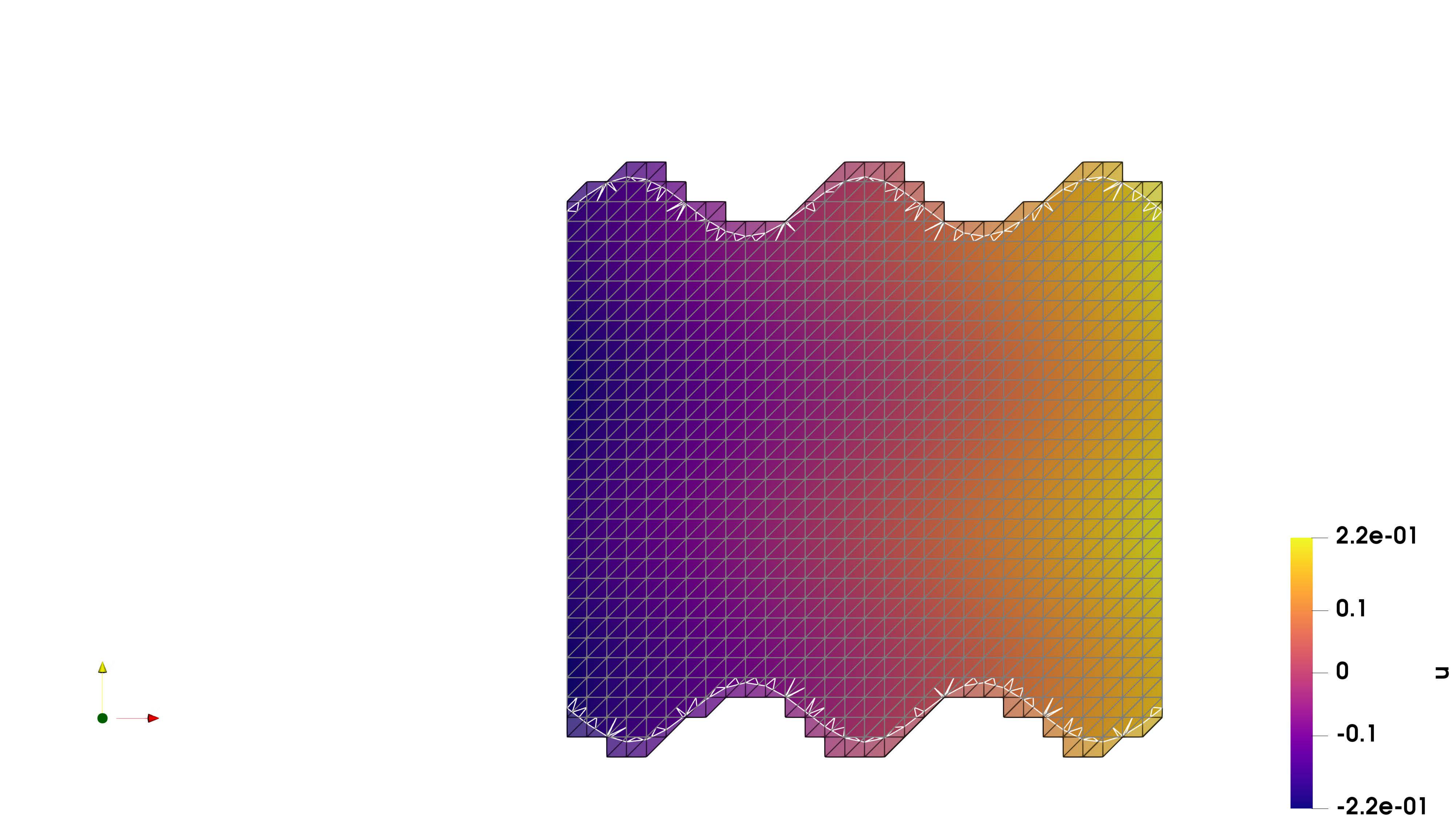}
  \end{minipage}
  \begin{minipage}[t]{0.49\textwidth}
    \vspace{0pt}
    \includegraphics[width=1.0\textwidth]{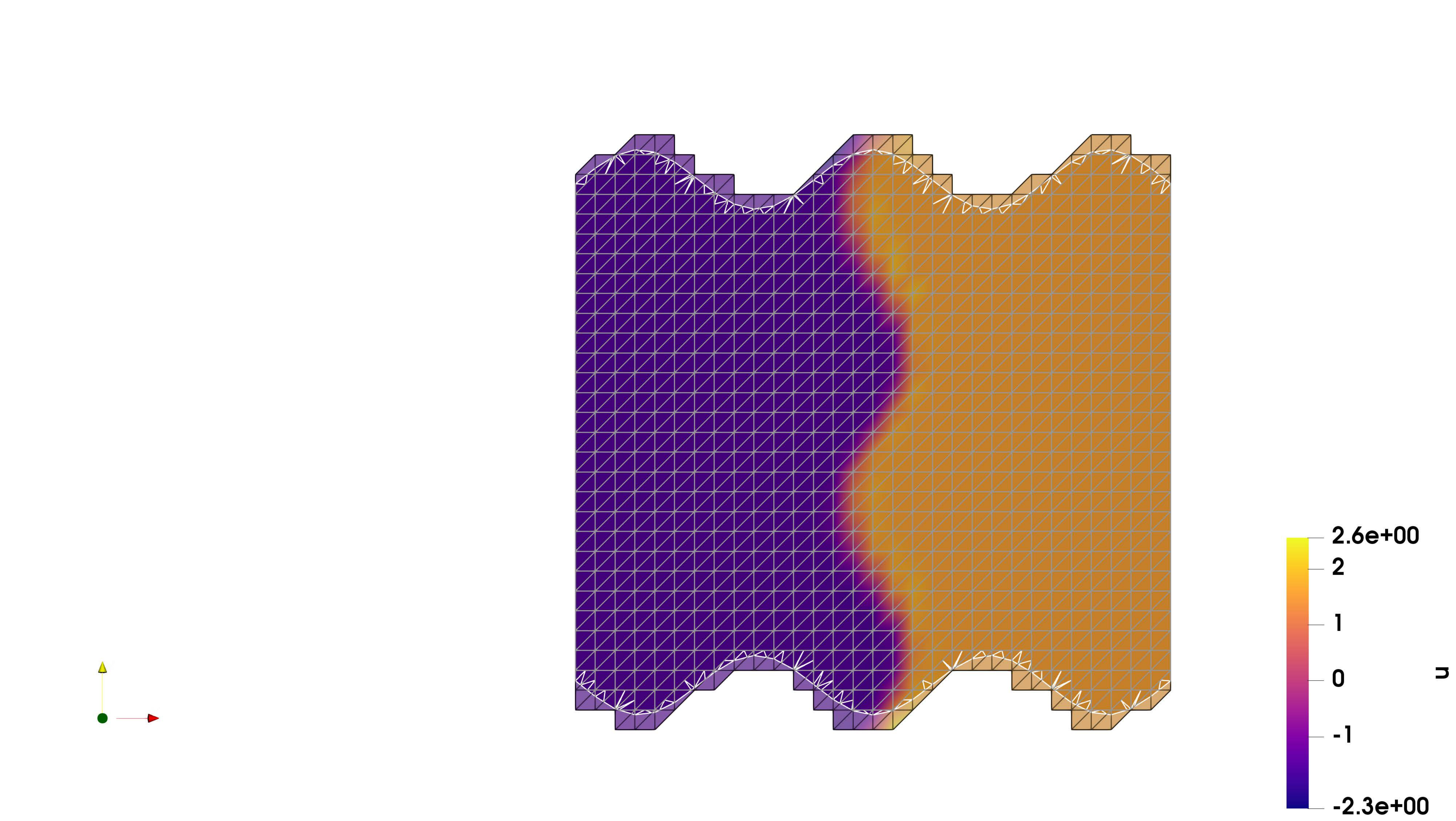}
  \end{minipage}
\end{center}
\end{subfigure}
  \begin{subfigure}[t]{1.0\textwidth}
   \hspace{8.7pt}\includegraphics[width=0.824\textwidth]{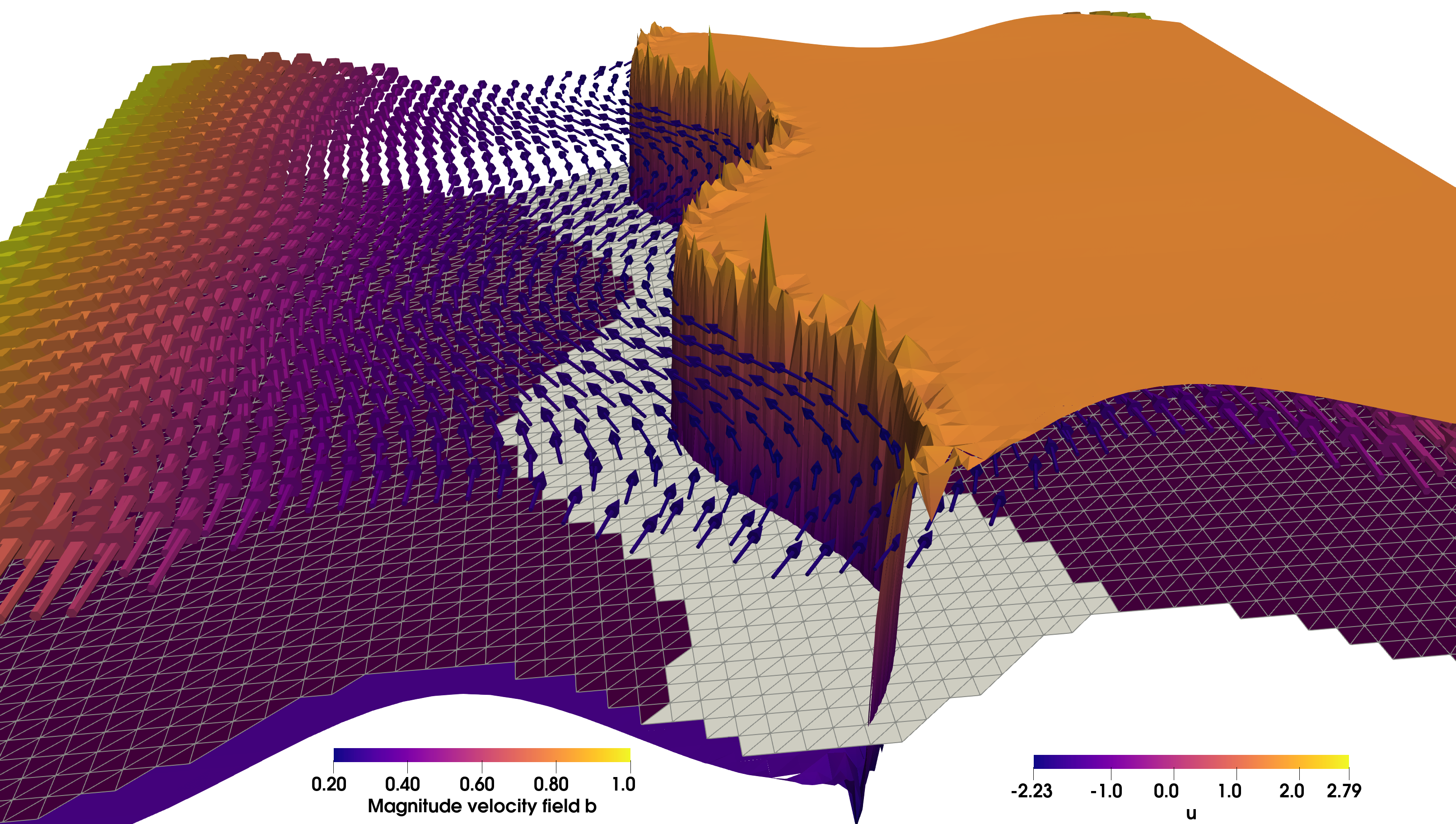}
\end{subfigure}
\caption{Solution plots for wavy inflow and outflow boundary
example. (Top/left) Smooth solution with $\epsilon=1$.
(Top/right) Nearly discontinuous solution with $\epsilon=10^{-8}$ . In both
cases the solution is plotted over the active background mesh
$\mathcal{T}_k$, the wavy inflow and outflow boundaries together
with physical domain are shown as well.
(Bottom) Close-up of the graph of the computed approximation to the rough solution near the
discontinuity, together with the velocity field $b$. Grey mesh domain around
the discontinuity was disregarded in modified EOC computations where only  the
error away from the internal layer was considered.}
  \label{fig:domain-wavy}
\end{figure}

Now setting $\epsilon=1$ in~\eqref{eq:analytical-soln}, we obtain a smooth solution
see Figure \ref{fig:domain-wavy} (left). Employing
high order elements up to $p=3$, optimal convergence rates are obtained, see 
Figure~\ref{fig:if:convergence_plots}. As before, we observe an EOC of $\mcO(h^{p+1})$
rather than $\mcO(h^{p+\onehalf})$ in the $L^2$ norm.
\begin{figure}[htb]
  \begin{subfigure}[t]{1.0\textwidth}
  \begin{center}
    \includegraphics[width=0.32\textwidth,page=1]{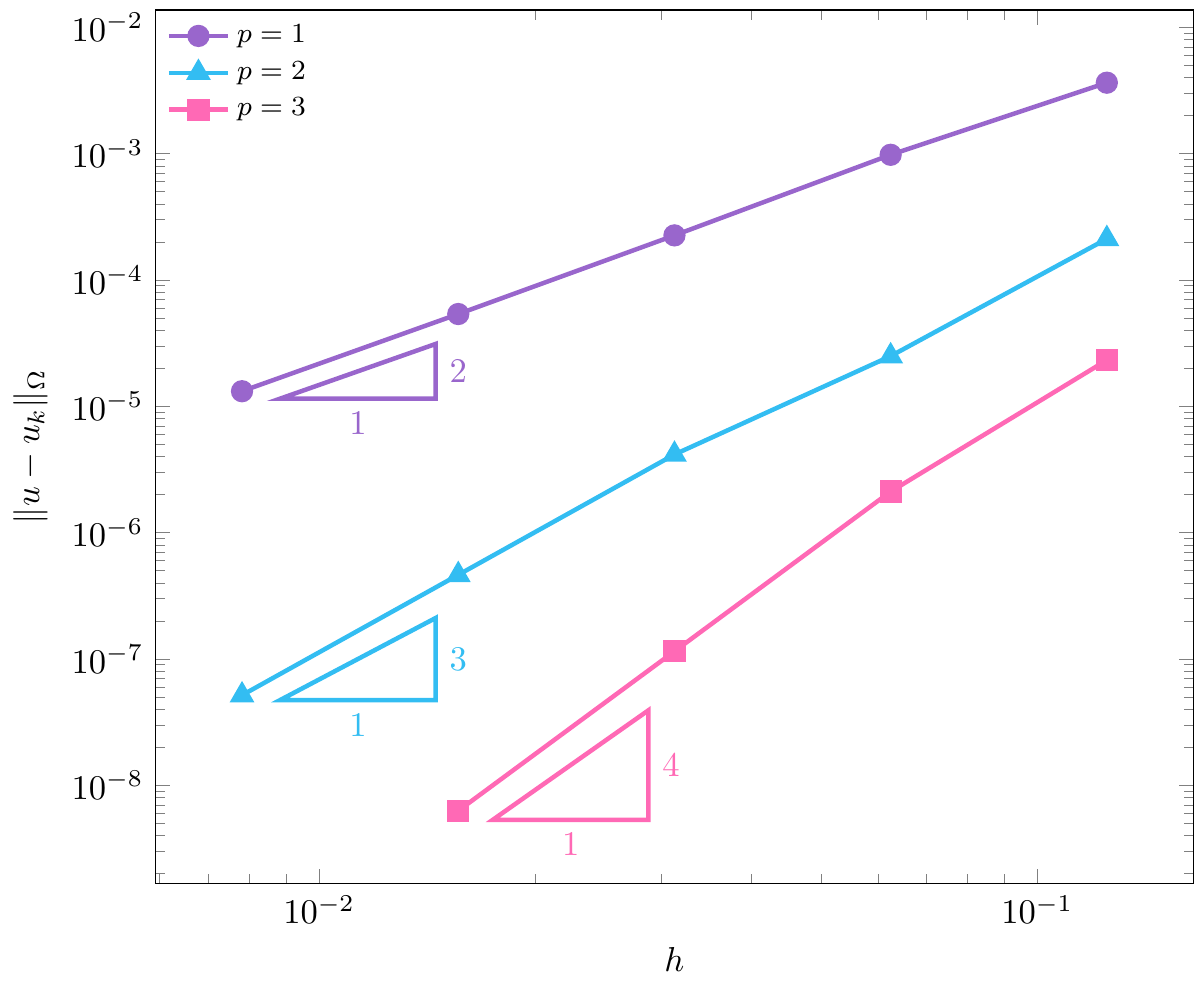}
    \includegraphics[width=0.32\textwidth,page=2]{figures/convergence_plots_hp_p1_to_p3_example_wavy_boundary.pdf} 
    \includegraphics[width=0.32\textwidth,page=3]{figures/convergence_plots_hp_p1_to_p3_example_wavy_boundary.pdf}
  \end{center}
  \caption{Convergence rates in the $L^2(\Omega)$ (left), upwind flux (middle), and streamline
    diffusion (right) norm.}
\end{subfigure}
  \begin{subfigure}[t]{1.0\textwidth}
  \begin{center}
    \includegraphics[width=0.32\textwidth,page=4]{figures/convergence_plots_hp_p1_to_p3_example_wavy_boundary.pdf}
    \includegraphics[width=0.32\textwidth,page=5]{figures/convergence_plots_hp_p1_to_p3_example_wavy_boundary.pdf} 
    \includegraphics[width=0.32\textwidth,page=6]{figures/convergence_plots_hp_p1_to_p3_example_wavy_boundary.pdf}
  \caption{Convergence rates in the weighted $L^2(\Gamma)$ (left), standard $L^2(\Gamma)$ (middle), and $L^{\infty}(\Omega)$ (right) norm.}
  \end{center}
\end{subfigure}
\vspace{-2ex}
  \caption{
    Observed error convergence rates for wavy inflow and
    outflow example with $p=1,2,3$. For $p=3$, computer memory restrictions
    prohibited to perform more than three mesh refinements.}
\label{fig:if:convergence_plots}
\end{figure}
Next, we set $\epsilon=10^{-8}$ in \eqref{eq:analytical-soln},
where we observe a sharp internal layer in the middle of the
domain, see Figure~\ref{fig:domain-wavy} (right). For our mesh
resolution, this layer can be considered as discontinuous and the
tabulated results in Table~\ref{tab:convanalysis_if_3d_P1} (top) confirm
the reduction of the convergence order to
$\mcO(h^{\nicefrac{1}{2}})$ in the $L^2$ norm, and with no
convergence in the other considered error norms.
A closer look at the computed solution 
in Figure~\ref{fig:domain-wavy} (right)
shows the typical osillations triggered by the Gibbs phenomenon
which, as in the standard fitted upwind DG method,
only appear in the close vicinity of the layer.
For completeness,
we repeat the numerical experiment
but this time, we confine the error computations
to those elements and faces which are contained in 
$\Omega_{\delta} = \Omega \setminus 
\{(x,y) \in \RR^2 \st |\lambda(x,y)| > 5 \delta \}$
with $\delta = 0.25$.
The exclusion of
a $\delta$ tubular neighborhood around the sharp layer
restores the previous optimal convergence rates as indicated by
results displayed in Table~\ref{tab:convanalysis_if_3d_P1}
(bottom). 
How to combine the proposed stabilized CutDG framework with various
shock capturing or limiter techiques~\cite{Shu2009}
to control spurious oscillation near discontinuities
will be part of our future research.

\begin{table}[htb]
  \small
  \begin{center}
    \caption{Convergence rates for the third example with a sharp
    internal layer for $\epsilon = 10^{-8}$ using $\PP_1(\mcT_k)$.
    Error norms are computed on whole domain (top) and 
    outside a $\delta$ neigborhood of the transition layer 
    with $\delta = 0.25$ (bottom).
    }
  \label{tab:convanalysis_if_3d_P1}
    \begin {tabular}{cr<{\pgfplotstableresetcolortbloverhangright }@{}l<{\pgfplotstableresetcolortbloverhangleft }cr<{\pgfplotstableresetcolortbloverhangright }@{}l<{\pgfplotstableresetcolortbloverhangleft }cr<{\pgfplotstableresetcolortbloverhangright }@{}l<{\pgfplotstableresetcolortbloverhangleft }c}%
\toprule Level $k$&\multicolumn {2}{c}{$\|e_k\|_{\Omega }$}&EOC&\multicolumn {2}{c}{$\trootonehalf \| |b\cdot n|^{\onehalf } \jump {e_k} \|_{\mathcal {F}_h}$}&EOC&\multicolumn {2}{c}{$\|\phi ^{\onehalf } b\cdot \nabla e_k \|_{\Omega }$}&EOC\\\midrule %
\pgfutilensuremath {0}&$8.68$&$\cdot 10^{-1}$&--&$1.30$&$\cdot 10^{-1}$&--&$9.05$&$\cdot 10^{-1}$&--\\%
\pgfutilensuremath {1}&$5.83$&$\cdot 10^{-1}$&\pgfutilensuremath {0.57}&$1.31$&$\cdot 10^{-1}$&\pgfutilensuremath {-0.01}&$1.12$&$\cdot 10^{0}$&\pgfutilensuremath {-0.31}\\%
\pgfutilensuremath {2}&$4.05$&$\cdot 10^{-1}$&\pgfutilensuremath {0.52}&$1.27$&$\cdot 10^{-1}$&\pgfutilensuremath {0.04}&$1.08$&$\cdot 10^{0}$&\pgfutilensuremath {0.06}\\%
\pgfutilensuremath {3}&$2.83$&$\cdot 10^{-1}$&\pgfutilensuremath {0.52}&$9.80$&$\cdot 10^{-2}$&\pgfutilensuremath {0.37}&$1.00$&$\cdot 10^{0}$&\pgfutilensuremath {0.10}\\%
\pgfutilensuremath {4}&$2.14$&$\cdot 10^{-1}$&\pgfutilensuremath {0.40}&$7.52$&$\cdot 10^{-2}$&\pgfutilensuremath {0.38}&$1.00$&$\cdot 10^{0}$&\pgfutilensuremath {0.00}\\\midrule %
Level $k$&\multicolumn {2}{c}{$\|e_k\|_{\Omega }$}&EOC&\multicolumn {2}{c}{$\trootonehalf \| |b\cdot n|^{\onehalf } \jump {e_k} \|_{\mathcal {F}_h}$}&EOC&\multicolumn {2}{c}{$\|\phi ^{\onehalf } b\cdot \nabla e_k \|_{\Omega }$}&EOC\\\midrule %
\pgfutilensuremath {0}&$6.96$&$\cdot 10^{-1}$&--&$1.06$&$\cdot 10^{-1}$&--&$8.15$&$\cdot 10^{-1}$&--\\%
\pgfutilensuremath {1}&$3.10$&$\cdot 10^{-2}$&\pgfutilensuremath {4.49}&$2.25$&$\cdot 10^{-2}$&\pgfutilensuremath {2.24}&$1.49$&$\cdot 10^{-2}$&\pgfutilensuremath {5.77}\\%
\pgfutilensuremath {2}&$3.78$&$\cdot 10^{-4}$&\pgfutilensuremath {6.36}&$5.32$&$\cdot 10^{-4}$&\pgfutilensuremath {5.40}&$9.61$&$\cdot 10^{-4}$&\pgfutilensuremath {3.95}\\%
\pgfutilensuremath {3}&$7.62$&$\cdot 10^{-5}$&\pgfutilensuremath {2.31}&$1.71$&$\cdot 10^{-4}$&\pgfutilensuremath {1.64}&$2.98$&$\cdot 10^{-4}$&\pgfutilensuremath {1.69}\\%
\pgfutilensuremath {4}&$1.87$&$\cdot 10^{-5}$&\pgfutilensuremath {2.03}&$6.05$&$\cdot 10^{-5}$&\pgfutilensuremath {1.50}&$1.01$&$\cdot 10^{-4}$&\pgfutilensuremath {1.56}\\\bottomrule %
\end {tabular}%

  \end{center}
\end{table}


\subsection{Geometrical robustness}
\label{sec:effectofghostpenalties}
Further, we numerically investigate the geometrical
robustness of the derived a priori error and condition number estimates,
illustrating the importance and different roles of the ghost penalties $g_c$
and $g_b$. Our experiments are based on the following setup. We start from a
structured background mesh $\widetilde{\mcT}_h$ for the rectangular domain
$\widetilde{\Omega} = [-0.35,0.35]^2\subset \RR^2$ with mesh size $h = 0.7/N$
and $N=10$. To generate potentially critical cut configurations, we
define a family $\{\Omega_{\delta_k}\}_{k=1}^{1000}$ of translated
circular domains $\Omega_{\delta_k}= \Omega + \delta_k \bft$ with
initial domain  
$\Omega = \{ (x,y) \in \RR^2 \st x^2 + y^2 - 0.25^2 < 0\}$
and set the translation vector~$\bft$ and translation parameter~$\delta_k$ to
$\tfrac{1}{\sqrt{2}} (h,h)$ and $k/1000$, respectively.

\subsubsection{Sensitivity of the approximation error}
\label{ssec:sensitivity-approx-errors}
In our first robustness test, we compute the
$\| \cdot \|_{\Omega}$ and
$\|\phi_b^{\onehalf } \bnabla (\cdot)\|_{\Omega}$ error
of the method \eqref{eq:cutDGM} as a function
of the translation parameter $\delta$.
The manufactured solution, velocity field and reaction term
are again given by~\eqref{eq:manufactured_uandb}.
To demonstrate that the ghost penalties are necessary
to render the errors insensitive to the particular cut configuration,
we repeat the computation with either $g_c$ or $g_b$ or both turned off,
setting the corresponding stabilization parameters to $0$.
\begin{figure}[ht!]
  \begin{center}
      \includegraphics[width=0.32\textwidth,page=1]{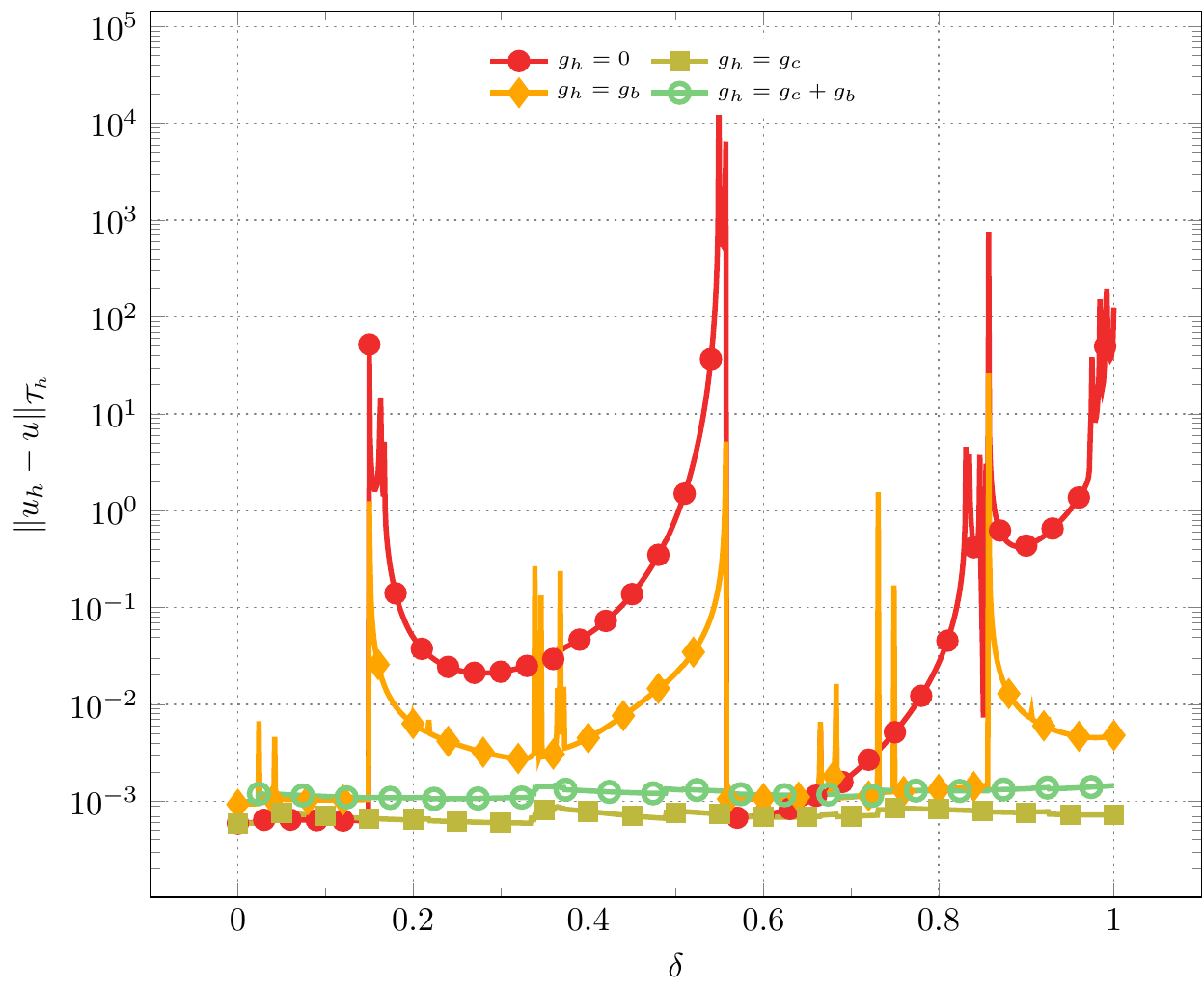}
      \includegraphics[width=0.32\textwidth,page=2]{figures/adr-plot_error_scan_P1.pdf}
      \includegraphics[width=0.32\textwidth,page=1]{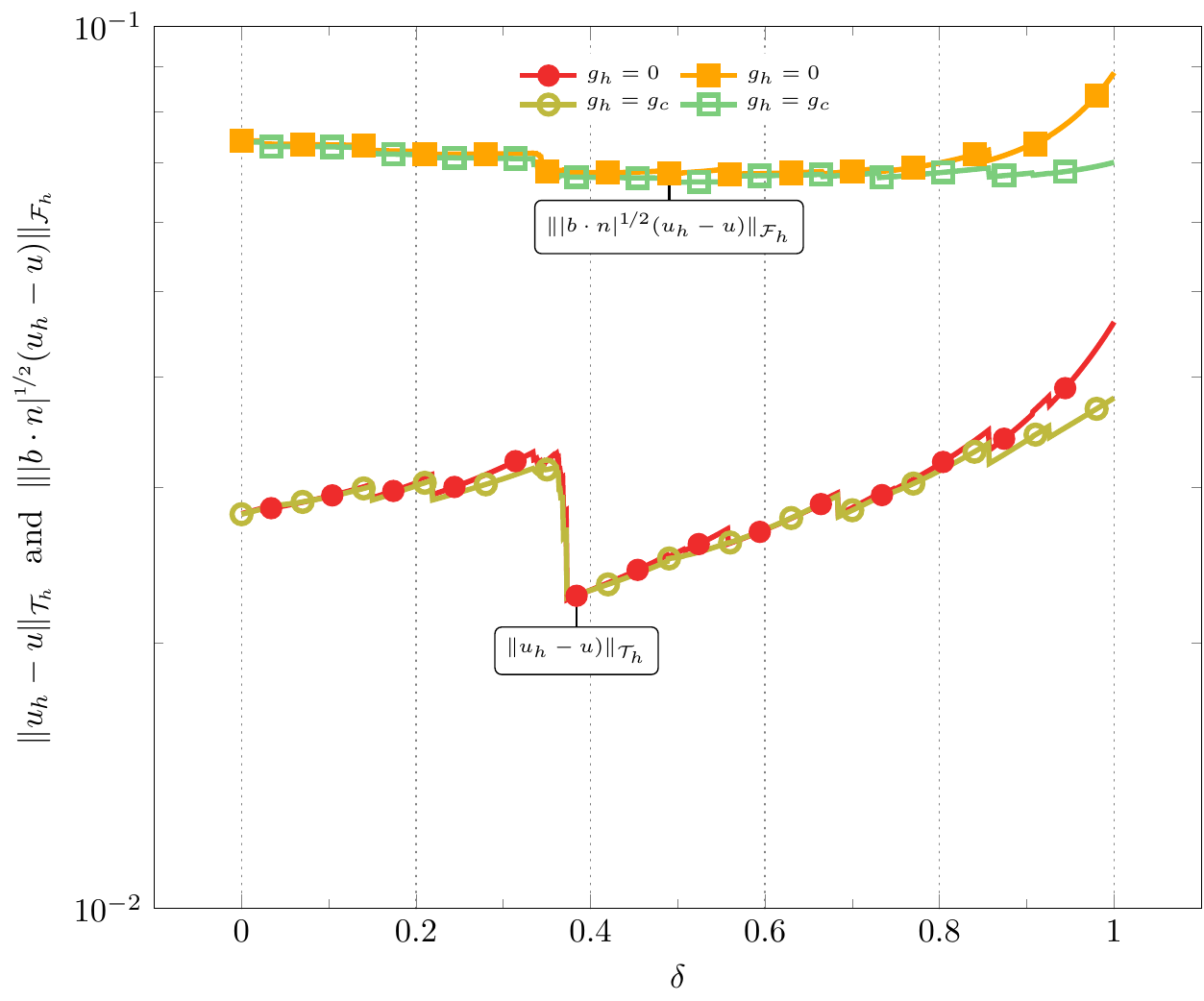}
\end{center}
  \caption{
  Error sensitivity for different choices of $g_h$ in the
  translated domain test considering $\PP_1(\mcT_h)$ and $\PP_0(\mcT_h)$
  spaces for various norms.
  (Left)  $L^2$ error sensitivity for $\PP_1(\mcT_h)$.
  (Middle) Streamline diffusion error sensitivity for $\PP_1(\mcT_h)$.
  (Right) For $\PP_0(\mcT_h)$, the $L^2$ and streamline diffusion error sensitivities
  are nearly unaffected by the absence of $g_c$.
  } \label{fig:error-scan}
\end{figure}
The resulting plots displayed in~Figure~\ref{fig:error-scan} show that both
the $\| \cdot \|_{\Omega}$ and $\|\phi_b^{\onehalf } \bnabla
(\cdot)\|_{\Omega}$ error curves are highly erratic and exhibit large spikes
when the ghost penalties are completely turned off. For the fully stabilized method on
the other hand, the errors are largely insensitive to the translation parameter~$\delta$.
We also observe that the $L^2$ error already appears to be
geometrically robust when only $g_c$ is activated, while the sole activation
of $g_b$ does not have such effect. For the streamline diffusion error, both
$g_b$ and $g_c$ have a stabilizing effect for the particular example.
The stabilizing effect of $g_c$ can be easily explained by recalling the definition of 
the unified ghost penalty $g_h$, cf. (\ref{eq:unified-ghf}),
and realizing that for the given coefficients and considered coarse
mesh size, we have that $c_0 \sim \tfrac{b_c}{h}$.

\subsubsection{Sensitivitity of the conditioning number}
\label{ssec:conditionnumber}
Using the identical setup as in
Section~\ref{ssec:sensitivity-approx-errors}, we now compute the
condition number of the system matrix~(\ref{eq:stiffness-matrix})
as a function of the translation parameter $\delta$. For the fully
stabilized formulation with stabilization parameters $\beta =
\gamma_0 = \gamma_1 = 0.01$, we see that the condition number
changes very mildly with the position parameter $\delta$, while for
the unstabilized formulation, large spikes can be observed up to
the point where the system matrix is practically singular. In a
second run, we study the effect of the magnitude of the ghost
penalty parameters on the magnitude and geometrical robustness of
the condition number. For simplicity, we rescale all ghost penalty
parameters simultaneously. Figure~\ref{fig:diff_sphere_stab}
(right) shows that both the base line magnitude and the fluctuation
of the condition number decreases with increasing size of the
stability parameters with a minimum for some $\gamma_b^f =
\gamma_c^f\in [0.01, 1]$. A further increase of the stability
parameters leaves the condition number insensitive to $\delta$, but
leads to an increase of the overall magnitude. Combined with a
series of convergence experiments (not presented here) for various
parameter choices and combinations, we found that our parameter
choice $\gamma_b^f = \gamma_b^c = 0.01$ offers a good balance
between the accuracy of the numerical scheme and the magnitude and
fluctuation of the condition number.

\begin{figure}[htb]
\begin{center}
    \includegraphics[width=0.48\textwidth,page=1]{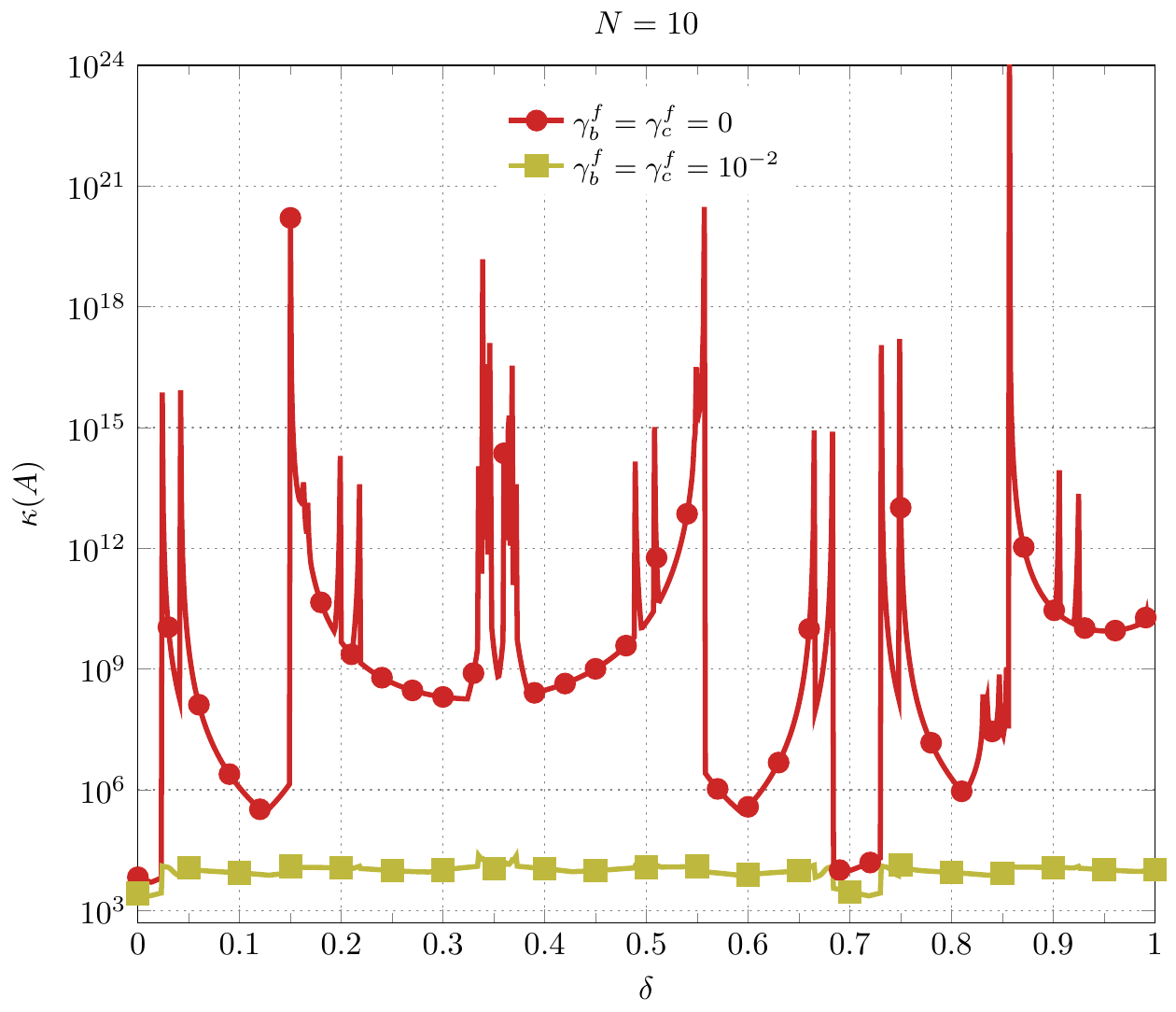} 
    \hspace{0.01\textwidth}
    \includegraphics[width=0.48\textwidth,page=2]{figures/plot_condition_numbers_hyperbolic.pdf}    
    \caption{(Left) Condition number sensitivity for the translated
    domain test with and without ghost penalty
    stabilization~$g_c$. (Right) Condition number sensitivity for
    changing values of the stabilization parameters $\gamma_b^f$ and~$\gamma_c^f$.}
  \label{fig:diff_sphere_stab}
  \end{center}
\end{figure}

\subsection{Time-Dependent Problem}
\label{sec:time-dependent-convergence}
We now briefly demonstrate solving the time-dependent problem~\eqref{eq:time-dep-adr-problem}
using the fully discretized schemes~\eqref{eq:RungeKutta1} and \eqref{eq:RungeKutta3}.
The library deal.II~\cite{dealII91} was used to implement and
conduct the numerical experiments in this section.
Here, the face based stabilization is used for $M_h$ 
(\eqref{eq:stabilized_mass_form}, \eqref{eq:mass_stabilization} and \eqref{eq:gc_face_based})
and the crosswind version of the ghost-penalty \eqref{eq:unified-ghf} is used for~$A_h$.
Let $\Omega$ be a circular domain,
\begin{equation}
\Omega = \{ x \in \RR^2 \st x_1^2 + x_2^2 - 0.25^2 < 0\},
\end{equation}
covered by a quadrilateral background mesh over
$\widetilde{\Omega} = [-0.35,0.35]^2\subset \RR^2$.
Let $b=(0.6,0.8)$ and $c=1$ be constant. 
We consider the manufactured solution
\begin{equation}
u(x,t) = \Theta(x - bt),
\end{equation}
where $\Theta : \RR^2 \to \RR$ is a given function.
This choice of $u$ fulfills \eqref{eq:time-dep-adr-problem} with the data
\begin{align}
f(x,t) & = c\Theta(x-bt), \\
g(x,t) & = \Theta(x-bt).
\end{align}
In the following experiment we choose
\begin{equation}
\Theta(x) = \sum_i \cos(\omega x_i),
\end{equation}
with $\omega = 8\pi$.
We shall consider both $Q_0$ and $Q_2$ elements, i.e., tensor product Lagrange polynomials of order 0 and 2 on quadrilateral elements.
Let $h_s$ denote the side length of the quadrilaterals ($h=\sqrt{2}h_s$).
We solve until end time $T=1$, with a time step 
\begin{equation}
\Delta t = 0.1 h_s,
\label{eq:cfl}
\end{equation}
using the method \eqref{eq:RungeKutta1} for $Q_0$ elements and
\eqref{eq:RungeKutta3} for $Q_2$ elements. A few snapshot of the
numerical solution are shown in
Figure~\ref{fig:time-dependent-snapshots}.

Next, we perform a first convergence study
where the mesh and time step are refined simultaneously.
More detailed numerical experiments of the time-dependent 
case are beyond the scope of the present paper
and will be presented elsewhere.
We are interested in the error at $t=T$ in $L^2$ norm and in the following semi-norm,
\begin{equation}
|v|_b^2 = \frac{1}{2} \int_{\partial \Omega} |b \cdot n| v^2 +  \frac{1}{2} \int_{\mcF_h} |b \cdot n| \jump{v}^2.
\label{eq:b-semi-norm}
\end{equation}
Note that, since we integrate over $\mcF_h$ in \eqref{eq:b-semi-norm} this error 
is computed against the extension of the solution to $\mcT_h$.
The errors for decreasing grid-sizes, $h_s=0.7/40 \cdot 2^{-k}$, are shown in 
Table~\ref{tab:time_dependent_Q0_convergence} for $Q_0$ elements and in
Table~\ref{tab:time_dependent_Q2_convergence} for $Q_2$ elements.
We see that for $Q_0$ elements the error converges as $\mcO(h)$ in $L^2$ norm and
as $\mcO(h^{1/2})$ in the $| \cdot |_b$ semi-norm.
Furthermore, for $Q_2$ elements we see that the error converges at least as $\mcO(h^3)$
in $L^2$ norm and at least as $\mcO(h^{5/2})$ in the $| \cdot |_b$ semi-norm.

In the same manner as in Section~\ref{sec:effectofghostpenalties}, we consider how
the errors change when we perturb the domain.
For a fix grid of size $h_s=0.7/N$, $N=25$, we consider a family of translated domains 
$\Omega_{\delta_k}= \Omega + (\delta_k - \frac{1}{2}) \bft$,
with
$\bft=(h,h)$ and $\delta_k=k/1001$, $k\in \{1,\ldots,1000\}$.
The two error norms are shown in Figure~\ref{fig:time-dependent-error-scan} as a function of $\delta$.
We see that there is only a small variation in the error when we vary $\delta$.

\begin{figure}[htp]
\centerline{\includegraphics[width=.7\paperwidth]{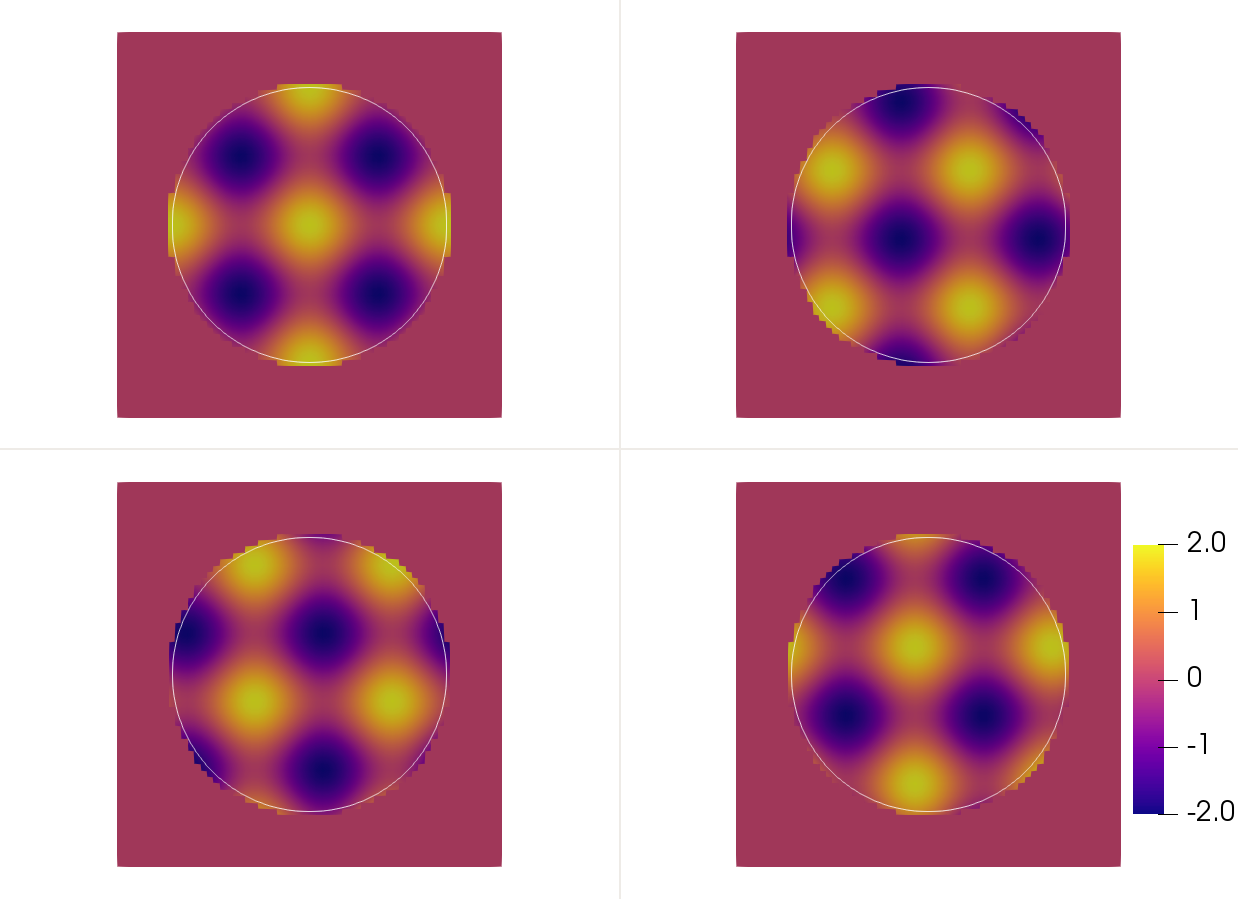}}
  \caption{Snapshots of the solution to the problem from Section~\ref{sec:time-dependent-convergence}, solved with $Q_2$ elements and $h=0.7/160$.
   Top-left: $t=0$, top-right: $t=T/8$, bottom-left: $t=T/4$, bottom-right: $t=3T/8$. 
  }
  \label{fig:time-dependent-snapshots}
\end{figure}

\begin{table}[htb]
  \small
    \caption{Convergence rates for the time-dependent problem in Section~\ref{sec:time-dependent-convergence}.}
\centerline{
\begin{subtable}[t]{.52\textwidth}
	\caption{Using $Q_0$ elements.}
    \label{tab:time_dependent_Q0_convergence}
    \begin {tabular}{ccccc}%
\toprule Level $k$&$\| e_k \|_\Omega $&EOC&$| e_k |_b$&EOC\\\midrule %
\pgfutilensuremath {0}&\pgfutilensuremath {2.00\cdot 10^{-1}}&--&\pgfutilensuremath {0.60}&--\\%
\pgfutilensuremath {1}&\pgfutilensuremath {1.30\cdot 10^{-1}}&\pgfutilensuremath {0.68}&\pgfutilensuremath {0.49}&\pgfutilensuremath {0.27}\\%
\pgfutilensuremath {2}&\pgfutilensuremath {7.17\cdot 10^{-2}}&\pgfutilensuremath {0.81}&\pgfutilensuremath {0.39}&\pgfutilensuremath {0.36}\\%
\pgfutilensuremath {3}&\pgfutilensuremath {3.85\cdot 10^{-2}}&\pgfutilensuremath {0.90}&\pgfutilensuremath {0.29}&\pgfutilensuremath {0.42}\\%
\pgfutilensuremath {4}&\pgfutilensuremath {2.00\cdot 10^{-2}}&\pgfutilensuremath {0.95}&\pgfutilensuremath {0.21}&\pgfutilensuremath {0.46}\\%
\pgfutilensuremath {5}&\pgfutilensuremath {1.02\cdot 10^{-2}}&\pgfutilensuremath {0.97}&\pgfutilensuremath {0.15}&\pgfutilensuremath {0.48}\\\bottomrule %
\end {tabular}%

\end{subtable}
\begin{subtable}[t]{.52\textwidth}
	\caption{Using $Q_2$ elements.}
    \label{tab:time_dependent_Q2_convergence}
    \begin {tabular}{ccccc}%
\toprule Level $k$&$\| e_k \|_\Omega $&EOC&$| e_k |_b$&EOC\\\midrule %
\pgfutilensuremath {0}&\pgfutilensuremath {6.27\cdot 10^{-4}}&--&\pgfutilensuremath {9.38\cdot 10^{-3}}&--\\%
\pgfutilensuremath {1}&\pgfutilensuremath {5.90\cdot 10^{-5}}&\pgfutilensuremath {3.41}&\pgfutilensuremath {1.17\cdot 10^{-3}}&\pgfutilensuremath {3.00}\\%
\pgfutilensuremath {2}&\pgfutilensuremath {6.12\cdot 10^{-6}}&\pgfutilensuremath {3.27}&\pgfutilensuremath {1.67\cdot 10^{-4}}&\pgfutilensuremath {2.81}\\%
\pgfutilensuremath {3}&\pgfutilensuremath {6.29\cdot 10^{-7}}&\pgfutilensuremath {3.28}&\pgfutilensuremath {2.48\cdot 10^{-5}}&\pgfutilensuremath {2.75}\\\bottomrule %
\end {tabular}%

\end{subtable}
}
\end{table}

\begin{figure}[ht!]
  \begin{center}
      \includegraphics[width=0.49\textwidth]{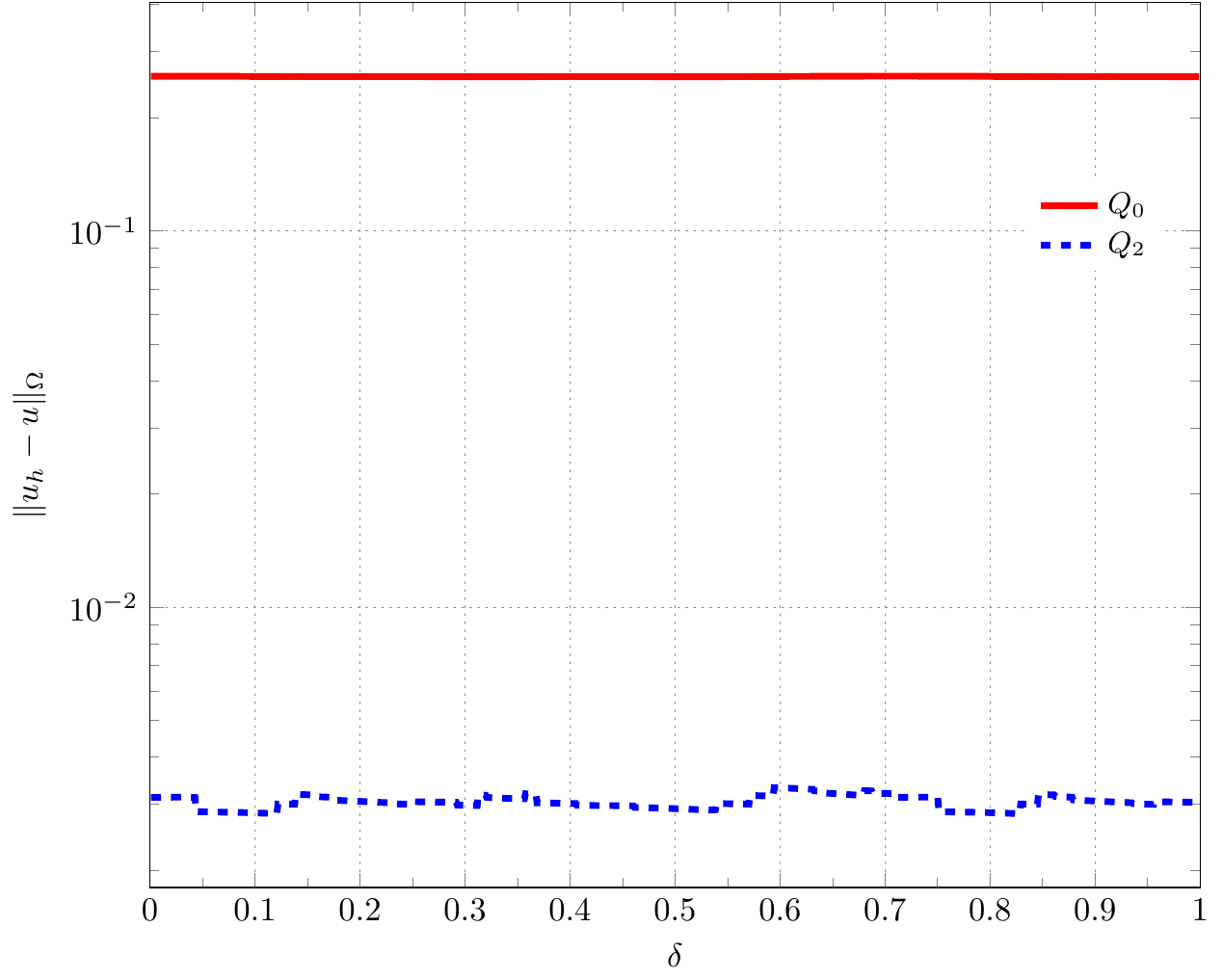}
      \includegraphics[width=0.49\textwidth]{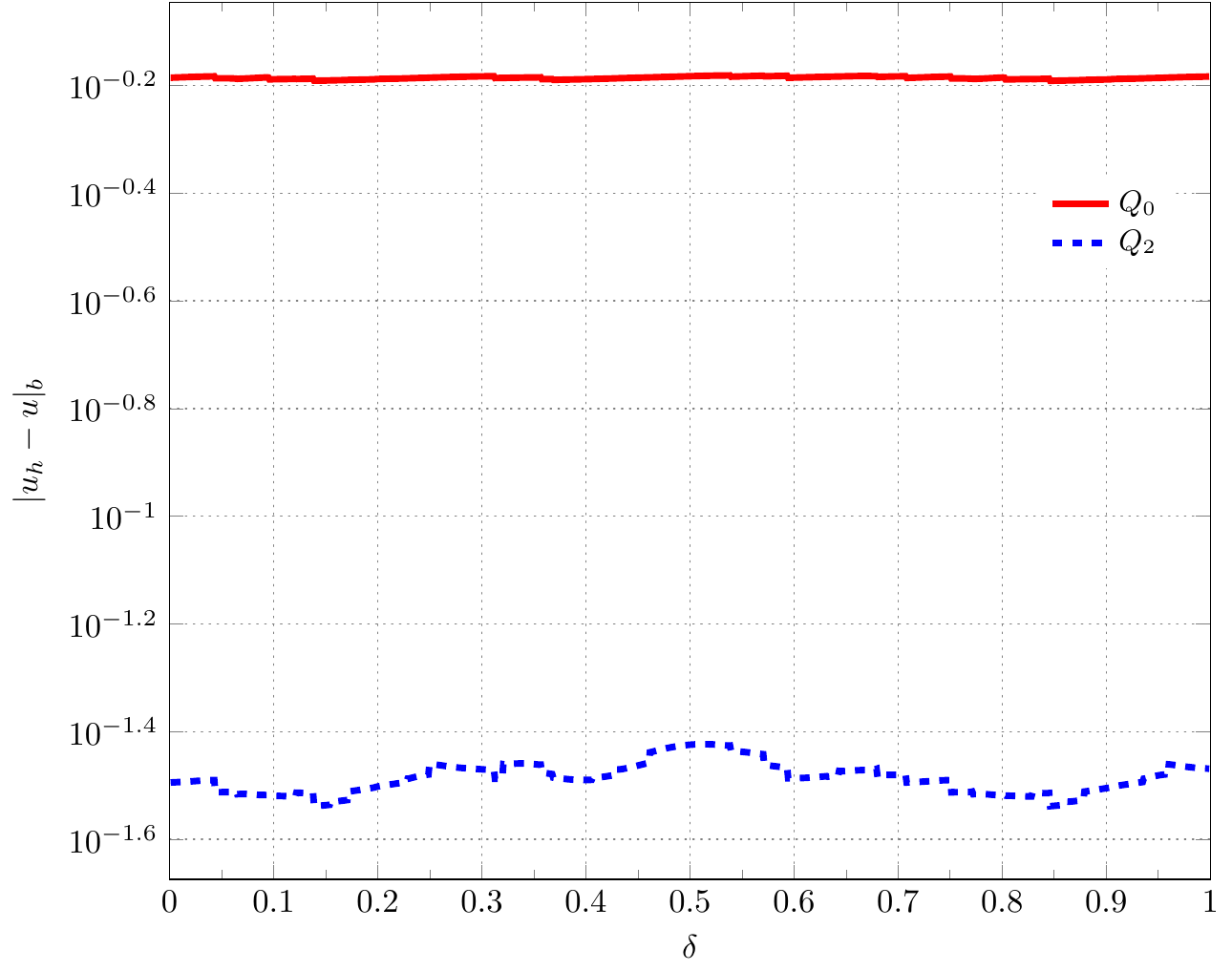}
\end{center}
  \caption{Error sensitivity for the translated domain test, for the time-dependent problem in Section~\ref{sec:time-dependent-convergence}.
  Error in the $L^2$ norm (left) and $|\cdot|_b$ semi-norm (right).} \label{fig:time-dependent-error-scan}
\end{figure}

\section{Conclusions and outlook}
\label{sec:conclusions-outlook}
In this paper we introduced and analyzed stabilized cut discontinuous
Galerkin methods for advection-reaction problems,
with a strong focus on stationary problems.
By adding certain stabilization in the vicinity of 
the embedded boundary, we were able to prove
geometrically robust
optimal a priori error and condition number estimates
which are not affected by the presence of small cut cells.
Our approach works for higher order DG methods
and, avoiding more classical stabilization strategies such as
cell merging, can be incorporated into typical
DG software frameworks with relative ease.
For the sake of simplicity, we assumed simplicial meshes in the 
theoretical analysis. However, similar to \cite{HansboLarsonLarsson2017, } 
the method directly generalizes to quadrilateral or hexahedral 
meshes, which we tested numerically in Section~\ref{sec:time-dependent-convergence}.
While the prototype problems~\eqref{eq:adr-problem} and \eqref{eq:time-dep-adr-problem} are already
of importance in many applications,
our long-term goal is to devise high order CutDG methods
for more complex simulation scenarios,
including mixed-dimensional coupled problems,
time-dependent linear hyperbolic problems and nonlinear
conservations laws on complicated or possibly evolving domains. We
therefore like to conclude our presentation with a short outlook on
natural prototype problems and challenges which need to be
addressed in further developments of our CutDG method.

Regarding time-dependent problems, an additional major challenge
in unfitted discretization methods is that the presence of 
small cut cells can lead to severe time-step
restrictions in explicit time-stepping methods.
To date, all unfitted DG methods and many 
finite volume based Cartesian cut cell methods
address this problem via cell merging.
Devising robust cell merging algorithms can be delicate
for complex and in particular moving 3D domains and requires
non-trivial adaptions of internal data structures.
We think the method~\eqref{eq:time-dep-cutDGM}, where we stabilize the $L^2$ scalar 
product/mass matrix associated with the time-derivative, is a promising alternative route.
The resulting modified mass matrix
is geometrically robust and thus expected to lead
to similar time-step restrictions as its fitted counterparts.
It is part of our ongoing research to combine the presented
stabilized CutDG framework with the general symmetric stabilization
approach proposed in~\cite{BurmanErnFernandez2010}
to devise explicit Runge-Kutta method for first order Friedrich
type operators covering advection reaction problems
as well as linear wave propagation phenomena.

For the numerical discretization of nonlinear scalar
hyperbolic conservation laws
it will be interesting to investigate how
the proposed CutDG stabilization can be combined
with the discontinous Galerkin Runge Kutta methods originally
developed in~\cite{CockburnShu1991,CockburnShu1989,CockburnLinShu1989,CockburnHouShu1990}. 
Here, an important ingredient will be to
combine small cut cell related stabilizations
with limiting techniques to achieve high order
accuracy for smooth solutions and at the same time,
ensure the computation of non-oscillatory, physically compatible
approximations near discontinuities.
This is in particular crucial when the discontinuity 
is located close to the embedded boundary
as ghost penalty stabilizations are designed
to be weakly consistent for \emph{smooth} solutions.
Without modifications,
we expect that such a scheme can lead to non-physical,
entropy-violating solutions, similar to the failure of certain 
linear symmetric stabilization techniques,
see \cite{ErnGuermond2013}. 
For face based ghost penalties,
on possible approach 
might be then to extend the nonlinear weighting of
face based stabilization suggested in \cite{ErnGuermond2013}.
Another possible direction is explore how CutDG stabilizations
can be paired with limiters based on
weighted essentially non-oscillatory
(WENO) methodology which have been proposed for fitted DG methods rather recently, see~\cite{Shu2016} for a short survey and a comprehensive bibliography.

Finally, we want to mention that 
diffusion-convection-reaction problems
in the form of mixed-dimensional coupled surface-bulk problems
have drawn a lot of attention in recent years,
as such problems occur naturally in, e.g.,
flow and transport problems in fractured porous media,
transport of surface active agents in immiscible two-phase flow
systems, and the modelling of cell motility, see, e.g.~\cite{FumagalliScotti2013,HansboLarsonZahedi2016,GanesanTobiska2012,MarthVoigt2013}.
To discretize the resulting multiphysic systems on complex or moving domains,
we plan to combine the presented CutDG method 
with results from  our previous work on CutFEMs for surfaces and
coupled surface-bulk
problems~\cite{BurmanHansboLarsonEtAl2018c,BurmanHansboLarsonEtAl2016a,HansboLarsonMassing2017,Massing2017}.

\section*{Acknowledgments}
The authors wish to thank the anonymous referees for the careful reading of
the manuscript and for the constructive
comments and helpful suggestions, which helped us 
to improve the quality of this paper.

\bibliographystyle{siamplain}
\bibliography{bibliography}

\end{document}